\newtheorem{theorem}{Theorem}[section]
\newtheorem{lemma}{Lemma}[section]
\newtheorem{remark}{Remark}[section]
\newcommand{\abs}[1]{|#1|^2}
\def\Xint#1{\mathchoice
{\XXint\displaystyle\textstyle{#1}}%
{\XXint\textstyle\scriptstyle{#1}}%
{\XXint\scriptstyle\scriptscriptstyle{#1}}%
{\XXint\scriptscriptstyle\scriptscriptstyle{#1}}%
\!\int}
\def\XXint#1#2#3{{\setbox0=\hbox{$#1{#2#3}{\int}$ }
\vcenter{\hbox{$#2#3$ }}\kern-.6\wd0}}
\def\dashint{\Xint-}
\newtheorem{prop}{Proposition}[section]
\newtheorem{defn}{Definition}[section]
\newtheorem{corr}{Corollary}[section]
\newcommand{\ddbar}{i\partial\bar\partial}
\newcommand{\ric}{{Ric}}
\newcommand{\innpro}[1]{\langle#1\rangle}
\newcommand{\bk}[1]{\Big(#1\Big)}
\newcommand{\xk}[1]{\big(#1\big)}
\newcommand{\eE}{\mathcal{F}}
\newcommand{\diam}{Diam}
\DeclareMathOperator{\vol}{Vol}
\DeclareMathOperator{\tr}{tr}
\numberwithin{equation}{section}
\begin{document}

\address{Department of Mathematics, Rutgers University, Piscataway, NJ 08854}

\email{xf35@math.rutgers.edu}
\address{Department of Mathematics, Columbia University, New York, NY 10027}

\email{bguo@math.columbia.edu}

\address{Department of Mathematics, Rutgers University, Piscataway, NJ 08854}
\email{jiansong@math.rutgers.edu}

\title{Geometric estimates for complex Monge-Amp\`ere equations}
\author{Xin Fu\qquad Bin Guo \qquad Jian Song}

\thanks{Research supported in
part by National Science Foundation grants DMS-1406124}

\begin{abstract} We prove uniform gradient and diameter estimates for a family of geometric complex Monge-Amp\`ere equations. Such estimates can be applied to study geometric regularity of singular solutions of complex Monge-Amp\`ere equations. We also prove a uniform diameter estimate for  collapsing families of twisted K\"ahler-Einstein metrics on K\"ahler manifolds of nonnegative Kodaira dimensions.

\end{abstract}

\maketitle

\section{Introduction}  

Complex Monge-Amp\`ere equations are a fundamental tool to study K\"ahler geometry and, in particular, canonical K\"ahler metrics of Einstein type on smooth and singular K\"ahler varieties. Yau's solution to the Calabi conjecture establishes the existence of Ricci flat K\"ahler metrics  on K\"ahler manifolds of vanishing first Chern class by a priori estimates for complex Monge-Amp\`ere equations \cite{Y1}. 

Let $(X, \theta)$ be a K\"ahler manifold of complex dimension $n$ equipped with a K\"ahler metric $\theta$. 
We consider the following complex Monge-Amp\`ere equation 
\begin{equation} \label{cma} 
(\theta+ \ddbar \varphi)^n = e^{ - f} \theta^n,
\end{equation}
where  $f\in C^\infty(X)$ satisfies the normalization condition $$\int_X e^{-f}\theta^n = \int_X \theta^n = [\theta]^n. $$
In the deep work of Kolodziej \cite{K}, Yau's $C^0$-estimate for solutions of equation (\ref{cma}) is tremendously improved by applying the pluripotential theory and it has important applications for singular and degenerate geometric complex Monge-Amp\`ere equations.  More precisely, suppose  the right hand side of equation (\ref{cma}) satisfies the following $L^p$ bound
$$\int_{X} e^{-pf} \theta^n \leq K, ~ \textnormal{for ~some}~ p>1, $$
then there exists  $C=C(X, \theta, p, K)>0$ such that any solution $\varphi$ of equation (\ref{cma}) satisfies the the following $L^\infty$-estimate
$$ \| \varphi - \sup_X \varphi \|_{L^\infty(X)} \leq C. $$
In particular, the equation (\ref{cma}) admits a unique continuous solution in $\textnormal{PSH}(X, \theta)$ as long as $e^{-f} \in L^p(X, \theta^n)$ without any additional regularity assumption for $f$. In \cite{K2, DDGHKZ}, it is shown that the bounded solution is also H\"older continuous and the H\"older exponent only depends only on $n$ and $p$. However, in general the solution is not uniformly Lipschitz continous (see e.g. \cite{DDGHKZ}). 

Complex Monge-Amp\`ere equations are closely related to geometric equations of Einstein type, and in many geometric settings, one makes assumption on a uniform lower bound  of the Ricci curvature. Therefore it is natural to consider the family of volume measures, whose curvature is uniformly bounded below. More precisely, 
we let $\Omega = e^{-f}\theta^n$ be a smooth volume form on $X$ such that
\begin{equation}\label{assump0}
Ric(\Omega) = - \ddbar \log \Omega \geq - A \theta
\end{equation}
for some fixed constant $A\geq 0$. This is equivalent to say, 
$$\ddbar f \geq - Ric(\theta) - A\theta, $$
or $$f\in \textnormal{PSH}(X, Ric(\theta) + A \theta). $$
%
%

We will explain one of the motivations for  condition (\ref{assump0}) by   some examples. Let $\{E_i \}_{i=1}^I$ and $\{ F_j\}_{j=1}^J$ be two families of effective divisors of $X$.  Let $\sigma_{E_i}$, $\sigma_{F_j}$ be the defining sections for $E_i$ and $F_j$, respectively,   and $h_{E_i}$ and $h_{F_j}$  smooth hermitian metrics for the line bundles associated to $E_i$ and $F_j$ respectively. In \cite{Y1}, Yau considers the following degenerate complex Monge-Amp\`ere equations 
\begin{equation}\label{yau1}
(\theta+ \ddbar \varphi)^n =  \left( \frac{ \sum_{i=1}^I   |\sigma_{E_i}|^{2\beta_i}_{h_{E_i} } }{ \sum_{j=1}^J   |\sigma_{F_j}|^{2\alpha_j}_{h_{F_j} }  }  \right)   \theta^n,
\end{equation}
where $\alpha_j, \beta_i>0$, and various estimates are derived \cite{Y1} assuming certain bounds on the degenerate right hand side of equation (\ref{yau1}).

If we consider the following case
\begin{equation}\label{yau2}
(\theta+ \ddbar \varphi)^n =   \frac{\theta^n }{ \sum_{j=1}^J   |\sigma_{F_j}|^{2\alpha_j}_{h_{F_j} }  }   ~ .
\end{equation}
the volume measure will blow up along common zeros of $\{F_j\}_{j=1}^J$.  If the volume measure on the right hand side of the equation (\ref{yau2}) is $L^p$-integrable for some $p>1$, i.e., $$\Omega =  \left( \sum_{j=1}^J   |\sigma_{F_j}|^{2\alpha_j}_{h_{F_j} }    \right)^{-1} \theta^n$$ satisfies 
$$\frac{\Omega}{\theta^n} = \left( \sum_{j=1}^J   |\sigma_{F_j}|^{2\alpha_j}_{h_{F_j} }    \right)^{-1}  \in L^p(X, \theta^n), ~\textnormal{for~ some~} p>1,  \int_X \Omega = \int_X \theta^n, $$
then there exists a unique (up to a constant translation) continuous solution of (\ref{yau2}). Furthermore,  $\Omega$ can be approximated by smooth volume forms $\Omega_j$ (c.f. \cite{Da}) satisfying
$$Ric(\Omega_j) \geq - (A+A')\theta, ~ \left\| \frac{\Omega_j}{\theta^n} \right\|_{L^p(X, \theta^n)}\leq \left\| \frac{\Omega}{\theta^n} \right\|_{L^p(X, \theta^n)}, ~  \int_X \Omega_j = \int_X \theta^n$$
for some fixed $A'\geq 0$.
Therefore condition (\ref{assump0}) is a natural generalization of the above case. In the special case when $\{F_j\}_{j=1}^J$ is a union of smooth divisors with simple normal crossings and each $\alpha_j \in (0,1)$, the solution of equation (\ref{yau2}) has conical singularities of cone angle of $2\pi (1-\alpha_j)$ along $F_j$, $j=1, ..., J$.

We now state the first result of the paper.

\begin{theorem}\label{main1} Let $(X, \theta)$ be an K\"ahler manifold of complex dimension $n$ equipped with a K\"ahler metric $\theta$. We consider the following complex Monge-Amp\`ere equation
\begin{equation}\label{eqn:main1}(\theta + \ddbar \varphi)^n = e^{\lambda \varphi} \Omega, \end{equation}
where $\lambda =0$ or $1$, and $\Omega$ is a smooth volume form satisfying $\int_X \Omega = \int_X \theta^n$. If 
\begin{equation}\label{assump1}
  \int_X \left( \frac{\Omega}{\theta^n} \right)^p \theta^n \leq K, ~ Ric(\Omega) = -\ddbar\log \Omega \geq -A \theta,
 \end{equation}
for some  $p>1$,  $K>0$ and  $A\geq 0$, then there exists $C=C(X, \theta, p, K, A)>0$ such that the solution $\varphi$ of equation (\ref{eqn:main1}) and the K\"ahler metric $g$ associated to the K\"ahler form $\omega = \theta+ \ddbar \varphi$ satisfy the following estimates.

\smallskip

\begin{enumerate}

\item $\|\varphi - \sup_X \varphi \|_{L^\infty(X)} + \|\nabla_g \varphi\|_{L^\infty(X, g)} \leq C. $

\medskip

\item $Ric(g) \geq - C g$.

\medskip

\item $Diam(X, g) \leq C$.

\end{enumerate}

\end{theorem}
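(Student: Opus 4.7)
The plan is to establish the three estimates in order, starting from Kolodziej's $L^\infty$ bound and then climbing in regularity through a second-order and a gradient estimate, from which the Ricci and diameter bounds will follow essentially for free. First, the $L^\infty$ estimate on $\varphi$ follows from Kolodziej's theorem (recalled in the introduction) together with its standard extension to equations carrying an $e^{\lambda\varphi}$ factor on the right; the $L^p$ hypothesis in (\ref{assump1}) is what is used, and no Ricci assumption is needed at this stage.

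Next I would establish the second-order estimate $\omega\leq C\theta$ via the Aubin--Yau maximum principle applied to $H=\log\tr_\theta\omega-B\varphi$ for $B$ large. The standard computation produces an inequality of the form
\[
\Delta_\omega\log\tr_\theta\omega \;\geq\; \frac{\Delta_\theta\log(\omega^n/\theta^n)}{\tr_\theta\omega}-C_\theta\,\tr_\omega\theta,
\]
where $C_\theta$ depends only on the bisectional curvature of $\theta$. Since $\omega^n=e^{\lambda\varphi}\Omega$ and the hypothesis $Ric(\Omega)\geq -A\theta$ is equivalent to $\ddbar\log(\Omega/\theta^n)\leq (A+\|Ric(\theta)\|_\infty)\theta$, the numerator is bounded below by $\lambda(\tr_\theta\omega-n)-C$; combining with $\Delta_\omega(-B\varphi)=-Bn+B\tr_\omega\theta$, the standard trace inequality closes at the maximum of $H$ and yields $\tr_\theta\omega\leq C$. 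The gradient estimate then comes from a maximum principle on the auxiliary quantity $H=|\nabla\varphi|_g^2\,e^{-\gamma(\varphi)}$ with $\gamma$ a suitably chosen decreasing function; the Bochner computation on $(X,\omega)$ produces a positive term $(\gamma')^2|\nabla\varphi|_g^4$, bad terms involving $Ric(\omega)$ (controlled via $Ric(\Omega)\geq-A\theta$ and the equation), and factors of $\tr_\omega\theta$ absorbed using the Laplacian bound just established; choosing $\gamma'$ large forces $|\nabla\varphi|_g^2\leq C$.

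The Ricci lower bound is then read off from $Ric(g)=Ric(\Omega)-\lambda(\omega-\theta)\geq -A\theta+\lambda(\theta-\omega)$. In the favourable case $\lambda\geq A$ this already gives $Ric(g)\geq-\lambda g$ directly; otherwise one combines with the reverse metric bound $\omega\geq c\theta$, which I would obtain from a Chern--Lu type estimate applied to $\log\tr_\omega\theta-B\varphi$ using the Ricci lower bound on $\omega$ and the bisectional curvature bound on $\theta$. The diameter estimate is immediate from $\omega\leq C\theta$: every $g$-length is bounded by $\sqrt{C}$ times the corresponding $g_\theta$-length, so $\diam(X,g)\leq\sqrt{C}\,\diam(X,g_\theta)$ and the latter is fixed. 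I expect the gradient estimate to be the main obstacle: the bad terms involving $\tr_\omega\theta$ must be absorbed via the Laplacian estimate while simultaneously balancing the Ricci contributions against the auxiliary function $\gamma$, and this interplay requires the careful coordination of all the preceding steps.
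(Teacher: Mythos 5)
Your first step (Kolodziej's $L^\infty$ estimate) and the Ricci lower bound via $Ric(g)=Ric(\Omega)-\lambda(\omega-\theta)$ combined with a Chern--Lu estimate $\omega\geq c\,\theta$ match the paper. But the centerpiece of your plan, the second-order estimate $\omega\leq C\theta$, is false under the hypotheses of the theorem and cannot be proved. The maximum principle applied to $\log\tr_\theta\omega-B\varphi$ only controls $\tr_\omega\theta$ at the maximum point; passing from $\tr_\omega\theta\leq C$ to $\tr_\theta\omega\leq C$ requires a pointwise upper bound on $\omega^n/\theta^n = e^{\lambda\varphi}\Omega/\theta^n$, and the hypothesis supplies only $\Omega/\theta^n\in L^p$, not $L^\infty$. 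Indeed, when $\lambda=0$ the equation reads $\omega^n=\Omega$, so if $\Omega/\theta^n$ is unbounded then $\omega$ is necessarily unbounded above. The paper makes exactly this point in the introduction (``one can not expect that $g$ is bounded from above''), and the example \eqref{yau2} realizes it. What does hold is the \emph{lower} bound $\omega\geq c\,\theta$ (Lemma \ref{lemma 2.2}), which follows from the Schwarz-lemma computation you mention only as a fallback.

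Because $\omega\leq C\theta$ fails, your diameter argument (``every $g$-length is bounded by $\sqrt{C}$ times the $\theta$-length'') collapses, and this is precisely where the paper's genuinely new idea enters. The diameter bound is obtained not by a pointwise metric comparison but by a volume-localization trick: assuming $\diam(X,g)=D$ is large, among the disjoint unit balls $B_g(x_i,3)$ threaded along a near-maximal geodesic there is one with $\vol_\theta\leq 6VD^{-1}$; one then builds a new density $F\Omega$ with $F\approx D^{\epsilon/(p(p-\epsilon))}$ on that ball, chosen so that $F\Omega$ still satisfies a uniform $L^{p-\epsilon}$ bound, solves the companion equation $(\theta+\ddbar\phi)^n=e^{\lambda\phi}F\Omega$, and compares $\varphi$ with $\phi$ through a cut-off maximum principle. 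Since both solutions have uniform $L^\infty$ bounds and $\hat\omega^n/\omega^n = D^{\epsilon/(p(p-\epsilon))}e^{\lambda(\phi-\varphi)}$ on the small ball, the resulting upper bound on $\hat\omega^n/\omega^n$ forces $D\leq C$. No upper bound on $\omega$ is ever used. Likewise, your gradient estimate plan cannot be closed with a test function of the form $|\nabla\varphi|_g^2\,e^{-\gamma(\varphi)}$ alone: the Bochner identity for $\Delta_g|\nabla\varphi|_g^2$ produces the term $-2\,\mathrm{Re}\,\nabla\varphi\cdot\nabla\tr_g\theta$, which involves a third derivative of $\varphi$ and cannot be absorbed by $\gamma'$ and $|\nabla\varphi|^4$ terms when $\tr_g\theta$ is uncontrolled from above. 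The paper instead adds $\alpha\tr_g\theta$ to the test quantity $\frac{|\nabla\varphi|^2}{B-\varphi}$, exploiting the differential inequality $\Delta_g\tr_g\theta\geq -C+c_0\,|\nabla\tr_g\theta|^2/\tr_g\theta$ from the Schwarz lemma to dominate the bad term.
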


If we  write $\Omega = e^{-f} \theta^n$,   assumption (\ref{assump1}) in Theorem \ref{main1} on $\Omega$ is equivalent to the following on $f$:
\begin{equation}\label{assump2}
e^{-f}\in L^p(X, \theta), ~\int_X e^{-f}\theta = [\theta]^n, ~ f\in \textnormal{PSH}(X, Ric(\theta) + A\theta) . 
\end{equation}
 $f$ is uniformly bounded above by the plurisubharmonicity and  the K\"ahler metric $g$ associated to $\omega=\theta+ \ddbar \varphi$ is bounded below by a fixed multiple of $\theta$ (see Lemma \ref{lemma 2.2}). However, one can not expect that $g$ is bounded from above since $f$ is not uniformly bounded above as in the example of equation (\ref{yau2}). Fortunately, we can bound the diameter of $(X, g)$ uniformly by Theorem \ref{main1}. 

The gradient estimate in Theorem \ref{main1} is a generalization of the gradient estimate in \cite{S1}. The new insight in our approach is that one should estimate gradient and higher order estimates of the potential functions with respect to the new metric instead of a fixed reference metric for geometric complex Monge-Amp\`ere equations such as those studied in Theorem \ref{main1}. We refer interested readers to \cite{B, PS} for gradient estimates for complex Monge-Amp\`ere equations with respect to various background metrics.

Let $\mathcal{M}(X, \theta, p, K, A)$ be the space of all solutions of equation (\ref{eqn:main1}), where $\Omega$ satisfies   assumption (\ref{assump1}) in Theorem \ref{main1}.  We also identify $\mathcal{M}(X, \theta, p, K, A)$ with the space of K\"ahler forms $\omega =\theta+ \ddbar \varphi$ for $\varphi\in \mathcal{M}(X, \theta, p, K, A)$. An immediate consequence of Theorem \ref{main1} is a uniform noncollapsing condition for  $\mathcal{M}(X, \theta, p, K, A)$. More precisely,  there exists a constant $C=C(X, \theta, p, K, A)>0$ such that for all K\"ahler metric $g$ associated to $\omega  \in \mathcal{M}(X, \theta, p, K, A)$ and for any point $x\in X$, $0< r <1$, 
\begin{equation} \label{noncol}
C^{-1} r^{2n} \leq Vol_g(B_g(x, r)) \leq Cr^{2n} ,
\end{equation}
where $B_g(x, r)$ is the geodesic ball centered at $q$ with radius $r$ in $(X, g)$. 

Combining the lower bound of Ricci curvature and the non-collapsing condition  (\ref{noncol}), we can apply the theory of degeneration of Riemannian manifolds \cite{CC} so that  any sequence of K\"ahler manifolds $(X, g_j) \in \mathcal{M}(X, \theta, p, K, A)$, after passing to a subsequence,  converges to a compact metric space $(X_\infty, d_\infty)$ with well-defined tangent cones of Hausdorff dimension $2n$ at each point in $X_\infty$.  In the case of equation (\ref{yau2}), we believe the solution induces a unique Riemannian metric space homeomorphic to the original manifold $X$ and all tangent cones are unique and   biholomorphic to $\mathbb{C}^n$. If this is true, one might even be able to establish higher order expansions for the solution. The ultimate goal of our approach is to construct canonical domains and equations on the blow-up of  solutions for geometric  degenerate complex Monge-Amp\`ere equations, by degeneration of Riemannian manifolds. 

We also remark that if we replace the lower bound for $Ric(\Omega)$ by an upper bound 
$$Ric(\Omega) \leq A \theta $$ 
in  assumption \eqref{assump1} of Theorem \ref{main1},  we can still obtain a uniform diameter upper bound. This in fact easily follows from the argument for the second order estimates of Yau \cite{Y1} and Aubin \cite{A}.

We will also use similar techniques in the proof of Theorem \ref{main1} to obtain diameter estimates in more geometric settings. Before that, let us introduce a few necessary and well-known notions in  complex geometry.

\begin{defn} Let $X$ be a K\"ahler manifold of complex dimension $n$ and $\alpha \in H^2(X, \mathbb{R})\cap H^{1, 1}(X, \mathbb{R})$ be nef. The numerical dimension of the class $\alpha$ is given by 
\begin{equation}
\nu(\alpha) = \max \{ k=0, 1, ..., n ~|~ \alpha^k \neq 0 ~\textnormal{in}~ H^{2k}(X, \mathbb{R})  \}.
\end{equation}
when $\nu(\alpha) = n$, the class $\alpha$ is said to be big.

\end{defn}

The numerical dimension $\nu(\alpha)$ is always no greater than $\dim_{\mathbb{C}}(X)$. 

\begin{defn} Let $X$ be a K\"ahler manifold and $\alpha \in H^2(X, \mathbb{R})\cap H^{1, 1}(X, \mathbb{R})$.  Then the class $\alpha$ is nef if  $\alpha+ \mathcal{A}$ is a K\"ahler class for any K\"ahler class $\mathcal{A}$. 

\end{defn}

When the canonical bundle $K_X$ is nef, $X$ is said to be a minimal model.    The abundance conjecture in birational geometry predicts that the canonical line bundle is always semi-ample (i.e. a sufficiently large power of the canonical line bundle is globally generated) if it is nef.

\begin{defn} \label{extreme} Let $\vartheta$ be a smooth real valued closed $(1,1)$-form on a K\"ahler manifold $X$. The extremal function $V$ associated to the form $\vartheta$ is defined by
$$V(z) =  \sup\{ \phi(z) ~|~ \vartheta+ \ddbar \phi \geq 0, ~\sup_X \phi = 0\} ,  $$ 
for all $z\in X$.

\end{defn}

Any $\psi \in \textnormal{PSH}(X, \vartheta)$ is said to have minimal singularities defined by Demailly (c.f. \cite{BD}) if $\psi- V$ is bounded.

 Let $(X,\theta)$ be a  K\"ahler manifold of complex dimension $n$ equipped with a K\"ahler metric $\theta$. Suppose $\chi$ is a real valued smooth closed $(1,1)$-form and its class $[\chi]$ is  nef and of numerical dimension $\kappa$. 
We consider the following family of complex Monge-Amp\`ere equations
\begin{equation}\label{eqn:main2}(\chi + t \theta + \ddbar \varphi_t)^n = t^{n-\kappa} e^{\lambda \varphi_t  + c_t} \Omega, ~ \textnormal{for} ~t\in (0,1],  \end{equation}
where $\lambda=0$, or $1$, and $c_t$ is a normalizing constant such that 
\begin{equation}\label{norm2}
\int_X t^{n-\kappa} e^{ c_t} \Omega = \int_X (\chi+ t \theta)^n. 
\end{equation}
Straightforward calculations show that $c_t$ is uniformly bounded for $t \in (0, 1]$.
The following proposition generalizes the result in \cite{BEGZ, K, EGZ1, ZZ} by studying a family of collapsing complex Monge-Amp\`ere equations. It also generalizes the results in \cite{DP, EGZ, KT} for the case when the limiting reference form is semi-positive. 

\begin{prop} \label{main2} We consider equation (\ref{eqn:main2}) with the normalization condition (\ref{norm2}). Suppose the volume measure $\Omega$ satisfies
$$\int_X \left( \frac{\Omega}{\theta^n}  \right)^p \theta^n \leq K$$   
for some $p>1$ and $K>0$. 
Then there exists a unique $\varphi_t\in \textnormal{PSH}(X,\chi + t \theta) $ up to a constant translation solving  equation (\ref{eqn:main2}) for all $t\in (0, 1]$. Furthermore, there exists  $C=C(X, \chi, \theta, p, K)>0$ such that for all $t\in (0, 1]$,
$$\|(\varphi_t - \sup_X \varphi_t) - V_t \|_{L^\infty(X)} \leq C, $$
where $V_t$ is the extremal function associated to $\chi+t\theta$ as in Definition \ref{extreme}. 
\end{prop}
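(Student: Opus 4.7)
The plan is to first reduce existence to the standard Kolodziej theory on each K\"ahler class $[\chi + t\theta]$, and then to obtain the uniform closeness of $\psi_t := \varphi_t - \sup_X \varphi_t$ to the extremal function $V_t$ via a Kolodziej/BEGZ-type capacity iteration that is uniform in the collapsing parameter $t$.

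Set $\omega_t = \chi + t \theta$, which is K\"ahler for $t \in (0, 1]$ because $[\chi]$ is nef. Kolodziej's $L^\infty$ estimate applied to the K\"ahler class $[\omega_t]$ under the $L^p$ condition on $\Omega$ provides a bounded $\omega_t$-psh solution $\varphi_t$, unique up to an additive constant absorbed in $c_t$. Expanding
\[
\int_X \omega_t^n \;=\; \sum_{k=0}^n \binom{n}{k} t^{\,n-k}\,[\chi]^k\cdot[\theta]^{\,n-k},
\]
the nefness of $\chi$ implies that each intersection number is nonnegative, while $\nu(\chi) = \kappa$ gives $[\chi]^\kappa\cdot[\theta]^{\,n-\kappa} > 0$. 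Hence $\int_X\omega_t^n = a_0 t^{\,n-\kappa} + O(t^{\,n-\kappa+1})$ with $a_0 > 0$, and combining with the normalization (\ref{norm2}) pins $c_t$ (and $c_t + \sup_X\varphi_t$ in the case $\lambda = 1$) within a uniform range.

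For the $L^\infty$ bound, normalize by $\psi_t = \varphi_t - \sup_X \varphi_t$; then $\psi_t \leq V_t$ on $X$ directly from the defining envelope for $V_t$. The nontrivial direction is $\psi_t \geq V_t - C$. Fix $s > 0$ and let $E_s := \{\psi_t < V_t - s\}$. Applying the comparison principle for bounded $\omega_t$-psh functions to the pair $(\psi_t, V_t - s)$ yields
\[
\int_{E_s} (\omega_t + \ddbar V_t)^n \;\leq\; \int_{E_s} (\omega_t + \ddbar \psi_t)^n \;\leq\; C\, t^{\,n-\kappa}\, K^{1/p}\, |E_s|_{\theta^n}^{\,1-1/p},
\]
where the second inequality uses the equation, the uniform bound on $c_t + \lambda\sup_X\varphi_t$, the pointwise bound $\psi_t \leq 0$, and H\"older against the $L^p$ bound on $\Omega/\theta^n$. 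After dividing by $t^{\,n-\kappa}$ and invoking the Kolodziej--Demailly capacity--volume comparison for the bounded $\omega_t$-psh envelope $V_t$, one obtains a recursive inequality of the form $h(s+\delta) \leq C\delta^{-1} h(s)^{1+\alpha}$ for $h(s) := \mathrm{Cap}_{\omega_t}(E_s)^{1/n}$ with constants independent of $t$, and a De Giorgi iteration then forces $E_s = \emptyset$ once $s \geq C(X,\chi,\theta,p,K)$.

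The hard part will be keeping every estimate uniform as $t \to 0$, when $\omega_t$ degenerates to the merely nef $\chi$, the Monge--Amp\`ere mass $\int\omega_t^n \sim t^{\,n-\kappa}$ collapses, and $V_t$ itself ceases to be uniformly bounded. The decisive observation is that both sides of the comparison inequality scale by $t^{\,n-\kappa}$, so the rescaled capacity $\widehat{\mathrm{Cap}}_{\omega_t} := t^{-(n-\kappa)}\mathrm{Cap}_{\omega_t}$ has uniformly bounded total mass and drives the iteration with $t$-independent constants. A secondary obstacle, specific to $\lambda=1$, is the uniform upper bound on $c_t + \sup_X\varphi_t$: integrating the equation against $\Omega$ gives $e^{\,c_t + \sup_X\varphi_t}\int_X e^{\psi_t}\Omega = a_0 + O(t)$, and a lower bound on $\int_X e^{\psi_t}\Omega$ can be extracted from a Skoda-type integrability estimate for the envelope $V_t$ in the class $[\omega_t]$, combined with the $L^p$ hypothesis on $\Omega$, closing the argument.
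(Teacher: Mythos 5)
Your proposal is, in its essentials, the paper's argument: the decisive observation that the capacity normalized by $[\chi_t^n]\sim t^{n-\kappa}$ is uniformly controlled, and that the Skoda/H\"ormander integrability estimate is uniform in $t$ because $\textnormal{PSH}(X,\chi_t)\subset\textnormal{PSH}(X,\gamma\theta)$ for a $t$-independent $\gamma$, is exactly what the paper packages in Lemmas 3.1--3.3 before running the De Giorgi iteration (Lemma 3.4). Two points deserve care, though. First, the comparison step you write down --- applying the comparison principle to the pair $(\psi_t,\,V_t-s)$ --- only bounds $\int_{E_s}(\omega_t+\ddbar V_t)^n$, the $V_t$-Monge--Amp\`ere mass, not the capacity of $E_{s+r}$; to feed the iteration you must instead compare $\psi_t$ against $r\phi+(1-r)V_t-s-r$ for the capacity test functions $\phi$, as in the paper's Lemma 3.2. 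Second, you handle $\lambda=1$ by bounding $c_t+\sup_X\varphi_t$ directly, which is a genuinely different route from the paper's Corollary 3.2 (a maximum-principle reduction to the already-settled $\lambda=0$ case using an auxiliary solution $\psi_t$); your route can work, but the Skoda-type estimate you invoke must be applied to $\psi_t$ itself (as a $(\chi+\theta)$-psh function with $\sup_X\psi_t=0$), not to the envelope $V_t$ --- since $\psi_t\le V_t$, integrability of $V_t$ gives no lower bound on $\int_X e^{\psi_t}\Omega$. The paper's reduction sidesteps this Jensen/Skoda step, at the cost of requiring a regularity input ($V_t\in C^{1,\alpha}$, from Berman--Demailly) to cook up a smooth comparison potential.
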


Proposition \ref{main2} can be applied to generalize Theorem \ref{main1}, especially for minimal K\"ahler manifolds in a geometric setting. A K\"ahler manifold is called a minimal model if its canonical bundle is nef.

\begin{theorem} \label{main3} Suppose $X$ is a smooth minimal model equipped with a smooth K\"ahler form $\theta$. For any $t> 0$, there exists a unique smooth twisted K\"ahler-Einstein metric $g_t$ on $X$ satisfying
\begin{equation}\label{eqn:tKE}Ric(g_t) = - g_t + t \theta . \end{equation}
There exists $C=C(X, \theta)>0$ such that for all $t\in (0,1]$, 
$$\diam(X, g_t) \leq C. $$
Furthermore, for any $t_j \rightarrow 0$, after passing to a subsequence,  the twisted K\"ahler-Einstein manifolds $(X, g_{t_{j}})$ converge  in Gromov-Hausdorff topology to a compact metric length  space $(\mathbf{Z}, d_\mathbf{Z})$.
The K\"ahler forms $\omega_{t_j} $ associated to $g_{t_j}$  converge in  distribution to a  nonnegative closed current $\widetilde\omega = \chi + \ddbar \widetilde \varphi $ for some $\widetilde\varphi  \in \textnormal{PSH}(X, \chi)$ of  minimal singularities, where $\chi\in [ K_X]$ is a fixed smooth closed $(1,1)$-form.

\end{theorem}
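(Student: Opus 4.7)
The plan is to reduce Theorem \ref{main3} to Proposition \ref{main2} and to re-use the techniques developed for Theorem \ref{main1}. Existence of the smooth twisted K\"ahler--Einstein metric $g_t$ for $t>0$ is standard: since $K_X$ is nef and $[\theta]$ is K\"ahler, the class $[K_X]+t[\theta]$ is K\"ahler for every $t>0$, and the Aubin--Yau continuity method applied to the twisted equation yields a unique smooth $g_t$ satisfying \eqref{eqn:tKE}.

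Next, fix a smooth closed $(1,1)$-form $\chi\in[K_X]$ and a smooth volume form $\Omega$ on $X$ with $Ric(\Omega)=-\chi$. Writing $\omega_t=\chi+t\theta+\ddbar\varphi_t$, equation \eqref{eqn:tKE} is equivalent to
$$(\chi+t\theta+\ddbar\varphi_t)^n = e^{\varphi_t+c'_t}\,\Omega$$
for a suitable constant $c'_t$. Absorbing $(n-\kappa)\log t$ into $c'_t$, where $\kappa=\nu(K_X)$, puts the equation precisely in the form \eqref{eqn:main2} with $\lambda=1$ and the normalization \eqref{norm2}. Since $\Omega$ is smooth, the $L^p$ hypothesis of Proposition \ref{main2} is trivially satisfied for every $p>1$, and we obtain
$$\|(\varphi_t-\sup_X\varphi_t)-V_t\|_{L^\infty(X)}\le C$$
uniformly in $t\in(0,1]$, where $V_t$ denotes the extremal function of $\chi+t\theta$.

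The heart of the argument is the uniform diameter bound. The Ricci lower bound $Ric(g_t)\ge -g_t$ is a tautology from \eqref{eqn:tKE}, so the remaining task is to prevent distances from blowing up as $t\to 0$. We carry this out by adapting the gradient/diameter argument used for Theorem \ref{main1}, which estimates the relevant first- and second-order quantities of the potential with respect to the new metric $g_t$ itself rather than with respect to the reference form; the required inputs are the Monge--Amp\`ere equation, a Ricci lower bound, and an $L^\infty$-type control of the potential, all of which Proposition \ref{main2} now supplies after subtracting the extremal function. The main obstacle is that $V_t$ acquires analytic singularities along the non-ample locus of $K_X$ as $t\to 0$, so the naive $L^\infty$ bound on $\varphi_t$ degenerates; the maximum-principle and integration-by-parts arguments therefore have to be run on $\varphi_t-V_t$, using the semipositivity $\chi+t\theta+\ddbar V_t\ge 0$ to absorb the loss of positivity of the reference form.

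Once the uniform diameter bound is in place, Gromov precompactness together with the uniform lower Ricci bound produces a subsequence $(X,g_{t_j})\to(\mathbf{Z},d_\mathbf{Z})$ in Gromov--Hausdorff topology, with $\mathbf{Z}$ a compact length space by Cheeger--Colding. For the current convergence, $L^1$-compactness of normalized $(\chi+\theta)$-plurisubharmonic functions (a class containing $\varphi_t-\sup_X\varphi_t$ for all $t\in(0,1]$) lets us pass to a further subsequence with $\varphi_{t_j}-\sup_X\varphi_{t_j}\to\widetilde\varphi$ in $L^1$, where $\widetilde\varphi\in\textnormal{PSH}(X,\chi)$. The bound from Proposition \ref{main2} descends to the limit and yields $|\widetilde\varphi-V|\le C$ for the extremal function $V$ of $\chi$, so $\widetilde\varphi$ has minimal singularities, and the distributional convergence $\omega_{t_j}\to\chi+\ddbar\widetilde\varphi$ is then immediate.
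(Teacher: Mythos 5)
Your proposal follows the paper's approach essentially exactly: reduce to the uniform $L^\infty$-bound of $\varphi_t - V_t$ from Proposition \ref{main2}, observe that $\ric(g_t) \ge -g_t$ is automatic, run the Theorem~\ref{main1} diameter machinery, and then extract the Gromov--Hausdorff limit and the weak limit of potentials. The current convergence is handled exactly as in the paper: $\varphi_t \ge V_t - C \ge V_0 - C$ gives the lower barrier, weak $L^1$-compactness of PSH functions gives a limit $\widetilde\varphi \in \textnormal{PSH}(X,\chi)$, and $|\widetilde\varphi - V_0| \le C$ gives minimal singularities.

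One small imprecision in your description of the diameter step: you say the maximum-principle argument ``has to be run on $\varphi_t - V_t$.'' In fact the paper's Lemma~4.3 does not touch $V_t$ directly in the maximum principle. The mechanism is the same auxiliary comparison as in Theorem~\ref{main1}: one picks a ball of small $\Omega$-volume, builds a modified density $F_t\Omega$ equal to $D^{1/2}$ on that ball and $1$ outside, solves $(\chi + t\theta + \ddbar\psi_t)^n = t^{n-\kappa}e^{\psi_t}F_t\Omega$, uses Corollary~\ref{corr 3.2} (i.e., Proposition~\ref{main2} with $\lambda=1$) for \emph{both} $\varphi_t$ and $\psi_t$ to conclude $\|\varphi_t - \psi_t\|_{L^\infty} \le C$, and then applies the maximum principle to $H = \eta\log(\hat\omega_t^n/\omega_t^n)$, where $\log(\hat\omega_t^n/\omega_t^n) = (\psi_t - \varphi_t) + \log F_t$. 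The extremal function $V_t$ enters only as the bridge that makes the $L^\infty$-comparison of $\varphi_t$ with $\psi_t$ uniform in $t$; it does not appear in the differential inequality. Since you explicitly say you are adapting the Theorem~\ref{main1} diameter argument, this is most likely just a slip in wording, not a gap, but it is worth being precise: the relative quantity estimated by the maximum principle is $\psi_t - \varphi_t$ (equivalently, the Monge--Amp\`ere density quotient), not $\varphi_t - V_t$.
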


Both Theorem \ref{main1} and Theorem \ref{main3} are generalization and improvement for the techniques developed in \cite{S1} for diameter and distance estimates. With the additional bounds on the volume measure, we transform Kolodziej's analytic $L^\infty$-estimate to a geometric diameter estimate. It is a natural question to ask how the metric space $(\mathbf{Z}, d_\mathbf{Z})$ is related to the current $\widetilde \omega$ on $X$. We conjecture $\widetilde \omega$ is smooth on an open dense set of $X$ and its metric completion coincides with  $(\mathbf{Z}, d_\mathbf{Z})$. However, at this moment, we do not even know the Hausdorff dimension or uniqueness of $(\mathbf{Z}, d_\mathbf{Z})$.

When $X$ is a minimal model of general type, Theorem \ref{main3} is proved in \cite{S1, S2} and the result in \cite{TW} shows that the singular set is closed and of Hausdorff dimension no greater than $2n-4$. 

We can also replace the smooth K\"ahler form $\theta$ in Theorem \ref{main3} by Dirac measures along effective divisors. For example, if $\{E_j\}_{j=1}^J$ is a union of smooth divisors with normal crossings and
$$\sum_{j=1}^J a_j E_j$$
is an ample  $\mathbb{Q}$-divisor  with some $a_j \in (0, 1)$ for $j=1, ..., J$. Then Theorem \ref{main3} also holds if we let $\theta = \sum_{j=1}^J a_j [E_j]. $ In this case, the metric $g_t$ is a conical K\"ahler-Einstein metric with cone angles of $2\pi (1-a_j)$ along each complex hypersurface $E_j$.

 A special case of the abundance conjecture is proved by Kawamata \cite{Ka} for minimal models of general type. When $X$ is a smooth minimal model of general type, it is  recently proved by the third named author \cite{S2} that the limiting metric space $(\mathbf{Z}, d_\mathbf{Z})$ in Theorem \ref{main3} is unique and is  homeomorphic to the algebraic canonical model $X_{can}$ of $X$. This gives an analytic proof of Kawamata's result using complex Monge-Amp\`ere equations, Riemannian geometry and geometric $L^2$-estimates.   Theorem \ref{main3} also provides a Riemannian geometric model for the non-general type case. This analytic approach will shed light  on the abundance conjecture if such a metric model is unique with reasonably good understanding of its tangle cones.  

Theorem \ref{main3} can also be easily generalized to a Calabi-Yau manifold $X$ equipped with a nef line bundle $L$ over $X$ of $\nu(L)=\kappa$.  

Our final result assumes semi-ampleness for the canonical line bundle and aims to connect the algebraic canonical models to geometric canonical models. Let $X$ be a K\"ahler manifold of complex dimension $n$. If the canonical bundle $K_X$ is semi-ample, the pluricanonical system induces a holomorphic surjective map
$$\Phi: X \rightarrow X_{can}$$
from $X$ to its unique canonical model $X_{can}$. In particular, $\dim_\mathbb{C} X_{can} = \nu(X)$.  We let    
$\mathbf{S}$ be the set of all singular fibers of $\Phi$ and $\Phi^{-1} \left( S_{X_{can}} \right)$, where $S_{X_{can}}$ is the singular set of $X_{can}$. The general fibre of $\Phi$ is a smooth Calabi-Yau manifold of complex dimension $n-\nu(X)$. It is proved in \cite{S1, S2} that there exists a unique twisted K\"ahler-Einstein current $\omega_{can}$ on $X_{can}$ satisfying
\begin{equation}\label{twke}
Ric(\omega_{can}) = - \omega_{can} + \omega_{WP}, 
\end{equation}
where  $\Phi^* \omega_{can} \in -c_1(X)$ and $\omega_{WP}$ is the Weil-Petersson metric for the variation of the Calabi-Yau fibres. In particular,  $\omega_{can}$ has bounded local potentials and is smooth on $X_{can}\setminus{S}_{can}$. We let $g_{can}$ be the smooth K\"ahler metric associated to $\omega_{can}$ on  $X_{can}\setminus {S}_{can}$.

\begin{theorem} \label{main4} Suppose $X$ is a projective manifold of complex dimension $n$ equipped with a K\"ahler metric $\theta$. If the canonical bundle $K_X$ is semi-ample and $\nu(K_X)=\kappa\in\mathbb N$, then the following hold for the twisted K\"ahler-Einstein metrics $g_t$ satisfying 
$$Ric(g_t) = - g_t + t \theta, ~ t\in (0, 1]. $$

\begin{enumerate}[label=(\arabic*)]

\item There exists $C>0$ such that for all $t\in (0, 1]$, 
$$\diam(X, g_t) \leq C. $$

\item Let $\omega_t$ be the K\"ahler form associated to $g_t$. For any compact subset $K\subset\subset X\backslash \mathbf{S}$, we have 
\begin{equation*}
\| g_t - \Phi^*g_{can}\|_{C^0(K, \theta)}\to 0,\quad \text{as }t\to 0. 
\end{equation*}
%

\item The rescaled metrics $t^{-1}\omega_t|_{X_y}$ converge uniformly to a Ricci-flat K\"ahler metric $\omega_{CY, y}$ on the fibre $X_y= \Phi^{-1}(y)$ for any $y\in  X_{can} \setminus \Phi(\mathbf{S})$, as $t\rightarrow 0$. 

\medskip

\item For any sequence $t_j \rightarrow 0$,  after passing to a subsequence,  $(X, g_{t_{j}})$ converge in Gromov-Hausdorff topology to a compact metric space $(\mathbf{Z}, d_\mathbf{Z})$. Furthermore, $X_{can}\setminus {S}_{can}$ is embedded as an open subset in the regular part $\mathcal{R}_{2\kappa}$ of $(\mathbf{Z}, d_\mathbf{Z})$ and $(X_{can} \setminus {S}_{can}, \omega_{can})$ is locally isometric to its open image. 

\smallskip

\end{enumerate}
In particular, if $\kappa =1$, $( \mathbf{Z}, d_\mathbf{Z} )$ is homeomorphic to $X_{can}$, with the regular part being open and dense, and each tangent cone being a metric cone on $\mathbb{C}$ with cone angle less than or equal to $2\pi$. 

\end{theorem}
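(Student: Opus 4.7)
The plan is to reduce the twisted Kähler-Einstein problem to a collapsing complex Monge-Ampère equation of the form studied in Proposition \ref{main2}, and then to combine the uniform $L^\infty$-estimate from that proposition with a refined version of the diameter argument of Theorem \ref{main1} and with the standard Song-Tian fibration analysis on the smooth locus. Using semi-ampleness of $K_X$, I would fix a smooth closed $(1,1)$-form $\chi \in [K_X]$ by pulling back a Fubini-Study form under (a multiple of) the pluricanonical map, so that $\chi = \Phi^*\chi_{can}$ is semi-positive with $\chi^\kappa \neq 0$ and $\chi^{\kappa+1} = 0$. Writing $\omega_t = \chi + t\theta + \ddbar\varphi_t$ and fixing a smooth volume form $\Omega$ with $-\ddbar\log\Omega = \chi$ and $\int_X\Omega=[\chi]^n$-compatible normalization, the equation $\ric(g_t) = -g_t + t\theta$ is equivalent to
$$(\chi + t\theta + \ddbar\varphi_t)^n = t^{n-\kappa} e^{\varphi_t + c_t}\Omega, \qquad t\in(0,1],$$
which is exactly of the form (\ref{eqn:main2}) with $\lambda = 1$. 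Proposition \ref{main2} then produces a unique $\varphi_t$ with $\|\varphi_t - \sup_X \varphi_t - V_t\|_{L^\infty} \leq C$, where $V_t$ is the extremal function of $\chi + t\theta$; in particular $V_t$ is uniformly bounded on compact subsets of $X\setminus \mathbf{S}$, so $\varphi_t$ is as well there.

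For (2) and (3), I would apply Yau-type $C^2$-estimates on a compact set $K\subset\subset X\setminus\mathbf{S}$ (using the local boundedness of $\varphi_t$ and the local semi-positivity and smoothness of the reference form), together with Evans-Krylov and Schauder, to obtain higher-order bounds uniform in $t$. The limiting equation on the base identifies any subsequential smooth limit of $g_t$ on $K$ as $\Phi^*g_{can}$, giving (2). For (3), restrict to a smooth fiber $X_y$, rescale $t^{-1}\omega_t|_{X_y}$, and observe that the restricted Monge-Ampère equation degenerates to the Calabi-Yau equation in $[\theta|_{X_y}]$; uniqueness of Yau's solution pins the limit to $\omega_{CY,y}$.

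The main obstacle is part (1), the uniform diameter bound in the collapsing setting, which is what forces the new techniques of this paper. My plan is to adapt the gradient-and-diameter machinery developed for Theorem \ref{main1} to the degenerating Monge-Ampère equation above. The crucial conceptual input from Theorem \ref{main1} is that gradient bounds must be taken in the evolving metric $g_t$ rather than against a fixed background; using the Kolodziej-type $L^\infty$-bound from Proposition \ref{main2}, the semi-positivity of $\chi$, and a maximum-principle argument applied to $|\nabla_{g_t}(\varphi_t-V_t)|^2_{g_t}$ with the weight $e^{-\varphi_t}$ from the right-hand side, one should obtain a uniform gradient bound in $g_t$ on the regular locus. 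Combined with $\ric(g_t)\geq -g_t$ and the volume identity $\int_X \omega_t^n = O(t^{n-\kappa})$, a Bishop-Gromov / integration-along-geodesics argument as in Theorem \ref{main1} produces the diameter bound. The subtlety is that the fibers collapse at rate $\sqrt{t}$, so the distance estimate must separate a bounded ``base'' contribution from the vanishing ``fiber'' contribution; this is handled by using $\chi=\Phi^*\chi_{can}$ to transfer the estimate through the fibration structure.

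Given (1) and the Ricci lower bound, Gromov precompactness yields a subsequential Gromov-Hausdorff limit $(\mathbf{Z},d_\mathbf{Z})$, and Cheeger-Colding regularity together with (2) identifies $(X_{can}\setminus S_{can}, g_{can})$ as a locally isometric open subset of the $2\kappa$-regular stratum $\mathcal{R}_{2\kappa}$, proving (4). For the $\kappa=1$ case, the base is a smooth curve and all fibers collapse to points; the map $\Phi$ extends continuously to a surjection $\mathbf{Z}\to X_{can}$, and injectivity follows from showing that each fiber has zero limiting diameter using the $O(\sqrt{t})$ fiber-diameter bound together with a density argument near the singular fibers $\mathbf{S}$. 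The tangent cone statement then follows from the general fact that $2$-dimensional Ricci limit spaces coming from $\ric\geq -C$ are metric cones on circles with cone angle at most $2\pi$.
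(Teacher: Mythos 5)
Your reduction to the collapsing Monge--Amp\`ere equation and the use of Proposition \ref{main2} (via Corollary \ref{corr 3.2}) to get the uniform $L^\infty$-bound is exactly what the paper does, as is the general plan for (2)--(3) using Yau-type $C^2$-estimates, Calabi $C^3$-estimates, and the fiberwise Moser iteration / uniqueness argument on the Calabi--Yau fibers. However, your proposed argument for the diameter bound (1) is not the paper's argument, and it has a gap.

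The paper does not deduce the diameter bound from a gradient estimate. Indeed, in the proof of Theorem \ref{main1} the gradient estimate $\|\nabla_g\varphi\|_{L^\infty(X,g)}\leq C$ is a \emph{separate} lemma established \emph{after} the diameter bound and is not used for it; and in the collapsing setting (Theorem \ref{main3}, hence Theorem \ref{main4}(1)) no gradient estimate for $\varphi_t-V_t$ is attempted at all. The actual mechanism is a perturbed-right-hand-side argument: assuming $\diam(X,g_t)=D$ is large, one finds a ball $B_{g_t}(x_i,3)$ of $\Omega$-volume at most $6VD^{-1}$, replaces $\Omega$ by $F_t\Omega$ where $F_t=D^{1/2}$ on $B_{g_t}(x_i,2)$ and $F_t=1$ outside $B_{g_t}(x_i,3)$ (chosen so the $L^p$-norm is still uniformly controlled), solves the modified equation and applies the uniform $L^\infty$-estimate to its solution $\psi_t$, and then runs a maximum principle on $\eta\log(\hat\omega_t^n/\omega_t^n)$ to force $D^{1/2}e^{\psi_t-\varphi_t}\leq C$ on the inner ball, which bounds $D$. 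This is the key new idea, and it is absent from your proposal. Your plan of bounding $|\nabla_{g_t}(\varphi_t-V_t)|^2_{g_t}$ and then invoking Bishop--Gromov would run into the obstruction that $V_t$ is an extremal envelope which is only $C^{1,\alpha}$ in general, not smooth, so the quantity is ill-defined pointwise; and even granting a gradient bound, it is not clear how a gradient bound on a potential plus a Ricci lower bound would yield a diameter bound, particularly as the volume collapses at rate $t^{n-\kappa}$. The paper's volume-pigeonhole-plus-modified-equation argument is essential precisely because it converts the Kolodziej $L^\infty$-estimate into diameter control without requiring any pointwise higher-order control on the degenerating metric.

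For (4), your appeal to Cheeger--Colding plus (2) is also looser than what the paper does. The paper explicitly constructs the embedding by passing to a subsequential Lipschitz limit $\Phi_\infty:\mathbf{Z}\to X_{can}$ of the fibration maps $\Phi_k$, sets $\mathbf{Z}_0=\Phi_\infty^{-1}(X_{can}^\circ)$, proves injectivity and local isometry by lifting base geodesics using the two-sided $C^0$-convergence $\omega_t\to\Phi^*\chi_\infty$ on compacta, and then establishes density of $\mathbf{Z}_0$ by a volume argument: the renormalized limit measure $\underline{V}_\infty$ is computed on balls in $\mathbf{Z}_0$ in terms of $\int e^{\varphi_\infty}\theta^n$ and shown to exhaust the total mass, ruling out a ball in $\mathbf{Z}\setminus\overline{\mathbf{Z}_0}$. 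These measure-theoretic density steps are the substantive content of (4) and should be spelled out rather than deferred to ``Cheeger--Colding regularity.''
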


We remark that a special case of Theorem \ref{main4} is proved in \cite{ZZh} with a different approach for  $\dim_{\mathbb{C}} X=2$.  In general, the collapsing theory in Riemannian geometry has not been fully developed except in lower dimensions. In the K\"ahler case, one hopes the rigidity properties can help us understand the collapsing behavior  for  K\"ahler metrics of Einstien type as well as long time solutions of the K\"ahler-Ricci flow on algebraic minimal models. Key analytic and geometric estimates in the proof of (2) in Theorem \ref{main4} are established in \cite{ST1, ST2} for  the collapsing long time solutions of the K\"ahler-Ricci flow and its elliptic analogues. The proof for (3) and (4) is a technical modification of  various local results of \cite{To, GTZ, TWY, GTZ1}, where collapsing behavior for families of Ricci-flat Calabi-Yau manifolds is comprehensively studied. Theorem \ref{main4} should also hold for K\"ahler manifolds with some additional arguments.

Finally, we will also apply our method to a continuity scheme proposed in \cite{LT} to study singularities arising from contraction of projective manifolds. This is an alternative approach for the analytic minimal model program developed in \cite{ST1, ST2, ST3} to understand birational transformations via analytic and geometric methods (see also \cite{SW1, SW2, SY, S0}). Compared to the K\"ahler-Ricci flow, such a scheme has the advantage of prescribed Ricci lower bounds and so one can apply the Cheeger-Colding theory for degeneration of Riemannian manifolds, on the other hand, it loses the canonical soliton structure for the analytic transition of singularities corresponding to birational surgeries such as flips.  

Let $X$ be a projective manifold of complex dimension $n$. We choose an ample line bundle $L$ on $X$ and we can assume that $L-K_X$ is ample, otherwise we can replace $L$ by a sufficiently large power of $L$. We choose $\theta \in [L-K_X]$ to be a smooth K\"ahler form and consider the following curvature equation
\begin{equation}\label{curveq}
Ric(g_t) = - g_t + t \theta, ~~ t \in [0, 1]. 
\end{equation}
Let 
\begin{equation}\label{tmax}
t_{min} = \inf\{ t \in [0, 1]~|~ \textnormal{ equation~ (\ref{curveq})~is solvable~ at~} t\}.
\end{equation}
It is straightforward to verify that $t_{min} <1$ by the usual continuity method (c.f. \cite{LT}). The goal is to solve equation (\ref{curveq}) for all $t\in (0, 1]$, however, one might have to stop at $t= t_{min}$ when $K_X$ is not nef.

\begin{theorem}\label{main5}
Let $g_t$ the solution of equation (\ref{curveq}) for $t\in (t_{min}, 1]$. There exists $C=C(X, \theta)>0$ such that for any $t\in (t_{min}, 1]$, 
\begin{equation}
Diam(X, g_t) \leq C. 
\end{equation}

\end{theorem}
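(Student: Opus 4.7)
The plan is to recast equation (\ref{curveq}) into the framework of Proposition \ref{main2}, with the reference $(1,1)$-class placed on the boundary of the K\"ahler cone at $t_{\min}$, and then to import the geometric diameter argument behind Theorem \ref{main3}. The starting observation is that the class $\alpha := (1-t_{\min})[K_X] + t_{\min}[L]$ is nef: for every $t \in (t_{\min},1]$ equation (\ref{curveq}) is solvable with $[\omega_t] = (1-t)[K_X]+t[L]$ K\"ahler, and letting $t \to t_{\min}^+$ places $\alpha$ in the closure of the K\"ahler cone. Fix a smooth closed $(1,1)$-form $\chi$ representing $\alpha$, set $\kappa := \nu(\alpha)$, and reparametrize by $\tau := t - t_{\min} \in (0, 1-t_{\min}]$ so that $[\omega_t] = [\chi] + \tau[\theta]$.

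Writing $\omega_t = \chi + \tau\theta + \ddbar\varphi_t$ and choosing a smooth volume form $\Omega$ with $\ddbar \log\Omega = \chi - t_{\min}\theta$ (which lies in the class $[K_X]$), a direct computation starting from $\ric(g_t) = -g_t + t\theta$ converts (\ref{curveq}) into
\begin{equation*}
(\chi + \tau\theta + \ddbar \varphi_t)^n = \tau^{n-\kappa}\,e^{\varphi_t + c_\tau}\,\Omega,
\end{equation*}
where the factor $\tau^{n-\kappa}$ is extracted so that $c_\tau$ remains uniformly bounded via the normalization (\ref{norm2}). This is precisely equation (\ref{eqn:main2}) with $\lambda = 1$, nef reference $\chi$ of numerical dimension $\kappa$, and smooth (hence $L^p$ for all $p>1$) right-hand side. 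Proposition \ref{main2} then yields
\begin{equation*}
\|\varphi_t - \sup_X \varphi_t - V_\tau\|_{L^\infty(X)} \leq C
\end{equation*}
uniformly in $t \in (t_{\min},1]$, where $V_\tau$ denotes the extremal function of $\chi + \tau\theta$.

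Combining this potential estimate with the automatic Ricci lower bound $\ric(g_t) = -g_t + t\theta \geq -g_t$, the final step is to run the gradient-and-diameter scheme from Theorem \ref{main3}, which refines Theorem \ref{main1} in precisely this collapsing regime. The analytic feature being exploited is that derivatives of the potential are taken with respect to the moving metric $g_t$ rather than a fixed background form, so that the maximum principle argument is insensitive to the degeneration of $\chi + \tau\theta$ as $\tau \to 0$. The main obstacle is exactly this geometric step when $\kappa < n$: then $[\omega_t]^n \sim C\tau^{n-\kappa}$ vanishes in the limit and $(X,g_t)$ genuinely collapses in the $(n-\kappa)$ transverse directions. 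This collapsing difficulty is already resolved in the proof of Theorem \ref{main3} for $\chi \in [K_X]$ on a minimal model, and the expectation is that the same argument transfers verbatim after the above change of nef reference class, the smoothness of $\Omega$ here being strictly stronger than the $L^p$ hypothesis used there.
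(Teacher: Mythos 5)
Your proposal is correct and follows essentially the same path as the paper: recast (\ref{curveq}) as a collapsing Monge-Amp\`ere equation with nef reference class $\chi \in [(1-t_{min})K_X + t_{min}L]$ and reparametrization $\tau = t - t_{min}$, invoke Proposition \ref{main2} for the uniform $L^\infty$ estimate against the extremal function $V_\tau$, and then run the diameter comparison argument from the proof of Theorem \ref{main3}. Your relation $\ddbar\log\Omega = \chi - t_{min}\theta$ is in fact the consistent choice (the paper's displayed formula $\chi = \ddbar\log\Omega + \theta$ appears to have dropped a factor of $t_{min}$), but otherwise the two arguments coincide.
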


Theorem \ref{main3} is a special case of Theorem \ref{main5} when $t_{min}= 0$ (c.f. \cite{S1}). When $t_{min}>0$, Theorem \ref{main5} is also proved in \cite{LTZ} with the additional assumption that  $$t_{min} L +  (1-t_{min})K_X$$ is semi-ample and big. The diameter estimate immediately allows one to identify the geometric limit as a compact metric length space when $t\rightarrow t_{min}$. In particular, it is shown in \cite{LTZ} that the limiting metric space is homeomorphic to the projective variety from the contraction induced by the $\mathbb{Q}$-line bundle $t_{min}  L + (1-t_{min})K_X $ when it is big and semi-ample. One can also use Theorem \ref{main5} to obtain a weaker version of Kawamata's base point free theorem in the minimal model theory (c.f. \cite{GS}).
  If $t_{min} L +  (1-t_{min})K_X$ is not big, our diameter estimate still holds and we conjecture the limiting collapsed metric space of $(X, g_t)$ as $t\rightarrow t_{min}$ is unique and is homeomorphic a lower dimensional projective variety from the contraction induced by $t_{min}  L + (1-t_{min})K_X $.

\section{Proof of Theorem \ref{main1} } Throughout this section, we let $\varphi\in \textnormal{PSH}(X,\theta)$ be the solution of the equation \eqref{eqn:main1} satisfying the condition (\ref{assump1}) in Theorem \ref{main1}.
We let $\omega= \chi+ \ddbar \varphi$ and let $g$ be the K\"ahler metric associated to $\omega$. 

\begin{lemma} \label{lemma 2.1}There exists $C = C(X,\theta, p, K)>0$ such that $$\|\varphi -\sup_X \varphi \|_{L^\infty(X)} \leq C. $$

\end{lemma}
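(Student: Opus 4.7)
The plan is to reduce Lemma~\ref{lemma 2.1} to Kolodziej's classical $L^\infty$ estimate (as quoted in the introduction), handling the two cases $\lambda=0$ and $\lambda=1$ separately. The former is immediate and the latter requires a preliminary upper bound on $\sup_X\varphi$ before the Kolodziej machinery can be applied.

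For $\lambda=0$, the equation is $(\theta+\ddbar\varphi)^n=\Omega$ with $\int_X\Omega=\int_X\theta^n$ and $\Omega/\theta^n\in L^p(X,\theta^n)$ by assumption \eqref{assump1}. This is exactly the setting of Kolodziej's theorem quoted in the introduction, so it yields $\|\varphi-\sup_X\varphi\|_{L^\infty}\le C(X,\theta,p,K)$ at once, with no further work.

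For $\lambda=1$, I would normalize by setting $M=\sup_X\varphi$ and $\psi=\varphi-M\le 0$, so that $\psi\in\psh(X,\theta)$ solves
$$(\theta+\ddbar\psi)^n=e^M\,e^{\psi}\,\Omega.$$
Integrating both sides and using $\int_X\Omega=\int_X\theta^n$ gives $e^M=\int_X\theta^n\big/\int_X e^{\psi}\Omega$, so it suffices to produce a uniform lower bound on $\int_X e^{\psi}\Omega$. By Jensen's inequality applied to the probability measure $\Omega/\int_X\Omega$,
$$\int_X e^{\psi}\,\Omega \;\ge\; \Big(\int_X\Omega\Big)\exp\Big(\tfrac{1}{\int_X\Omega}\int_X\psi\,\Omega\Big),$$
so I need a lower bound on $\int_X\psi\,\Omega$. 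By H\"older's inequality with $1/p+1/q=1$,
$$\Big|\int_X\psi\,\Omega\Big| \;\le\; \|\psi\|_{L^q(X,\theta^n)}\Big(\int_X(\Omega/\theta^n)^p\theta^n\Big)^{1/p}\le K^{1/p}\|\psi\|_{L^q(X,\theta^n)}.$$
Since $\psi\in\psh(X,\theta)$ with $\sup_X\psi=0$, the standard compactness of normalized $\theta$-psh functions gives $\|\psi\|_{L^q(X,\theta^n)}\le C(X,\theta,q)$. Combining these ingredients produces $M\le C$.

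Once $M$ is bounded, the measure $e^{M}e^{\psi}\Omega$ (which is the right-hand side of the equation for $\psi$) has total mass $\int_X\theta^n$ and satisfies
$$\int_X\big(e^{M}e^{\psi}\Omega/\theta^n\big)^p\theta^n\le e^{pM}\int_X(\Omega/\theta^n)^p\theta^n\le e^{pM}K,$$
since $\psi\le 0$. So Kolodziej's theorem, applied now to the $\psi$-equation, yields $\|\psi\|_{L^\infty}\le C(X,\theta,p,K)$, and $\|\varphi-\sup_X\varphi\|_{L^\infty}=\|\psi\|_{L^\infty}\le C$ as required. The main (and essentially only) conceptual step is the bound on $\sup_X\varphi$ in the $\lambda=1$ case; everything else is a direct invocation of results in the introduction and standard properties of quasi-plurisubharmonic functions.
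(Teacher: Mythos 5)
Your proof is correct and follows the same basic route as the paper, which simply invokes Kolodziej's theorem. The paper leaves implicit the reduction of the $\lambda=1$ case to a Kolodziej-type statement; you have filled in that routine step correctly — bounding $\sup_X\varphi$ via the mass normalization together with Jensen, H\"older, and the uniform $L^q$ bound for normalized $\theta$-psh functions, and then applying Kolodziej to the shifted equation.
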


\begin{proof} The $L^\infty$ estimate immediately follows from Kolodziej's theorem \cite{K}.

\end{proof}

The following is a result similar to Schwarz lemma.

\begin{lemma} \label{lemma 2.2}There exists $C=C(X, \theta, p, K, A)>0$ such that 
$$\omega \geq C \theta. $$

\end{lemma}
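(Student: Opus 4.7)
The plan is a maximum principle argument on the auxiliary quantity
$$H := \log \tr_\omega \theta - B\varphi,$$
where $B>0$ will be chosen in terms of the geometry of $\theta$ and the constant $A$ in hypothesis (\ref{assump1}). Since the oscillation of $\varphi$ is controlled by Lemma \ref{lemma 2.1}, any upper bound for $H$ at a single point automatically upgrades to a uniform upper bound for $\tr_\omega\theta$ everywhere; by elementary linear algebra this is equivalent to the desired lower bound $\omega \geq C^{-1}\theta$.

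First, I would derive a Ricci lower bound for $\omega$. Writing $\Omega = e^{-f}\theta^n$, the equation reads $\omega^n = e^{\lambda\varphi - f}\theta^n$, and taking $-\ddbar\log$ of both sides, combined with the hypothesis $\ddbar f \geq -\mathrm{Ric}(\theta) - A\theta$, yields
$$\mathrm{Ric}(\omega) = \mathrm{Ric}(\theta) + \ddbar f - \lambda(\omega - \theta) \geq -\lambda\omega - A\theta \geq -(\lambda + A\,\tr_\omega\theta)\,\omega,$$
where the last step uses the pointwise inequality $\theta \leq (\tr_\omega\theta)\,\omega$. Feeding this bound into the classical Chern--Lu / Aubin--Yau Schwarz-lemma computation then gives
$$\Delta_\omega \log\tr_\omega\theta \geq -\lambda - (A + C_\theta)\,\tr_\omega\theta,$$
where $C_\theta$ is any upper bound on the holomorphic bisectional curvature of $\theta$.

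Since $\Delta_\omega\varphi = n - \tr_\omega\theta$, combining the two inequalities yields
$$\Delta_\omega H \geq (B - A - C_\theta)\,\tr_\omega\theta - Bn - \lambda.$$
Choosing $B := A + C_\theta + 1$, the inequality $\Delta_\omega H(x_0) \leq 0$ at a maximum point $x_0$ of $H$ forces $\tr_\omega\theta(x_0) \leq Bn + \lambda$. The bound $H(x) \leq H(x_0)$ everywhere, together with the $L^\infty$ control on $\varphi - \sup_X\varphi$ from Lemma \ref{lemma 2.1}, promotes this to a uniform global upper bound on $\tr_\omega\theta$, which is equivalent to $\omega \geq C\theta$ for some $C = C(X,\theta, p, K, A)>0$. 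The only non-routine step is the Chern--Lu computation itself, which is classical; the conceptual point, in keeping with the overall philosophy of the paper, is that the argument uses only the lower bound on $\mathrm{Ric}(\Omega)$ and never any pointwise control on $f$, which may well be unbounded below, as in the example (\ref{yau2}).
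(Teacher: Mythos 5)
Your proof is correct and is essentially the same argument the paper uses: both apply the maximum principle to $H = \log\tr_\omega\theta - B(\varphi-\sup_X\varphi)$ after deriving the Chern--Lu/Schwarz inequality $\Delta_\omega\log\tr_\omega\theta \geq -C - C\tr_\omega\theta$ from the hypothesis $\mathrm{Ric}(\Omega)\geq -A\theta$, and then invoke Lemma \ref{lemma 2.1}. The only difference is that you spell out the Chern--Lu computation while the paper states the inequality without derivation.
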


\begin{proof}  There exists $C=C(X, \theta, A)>0$ such that 
$$\Delta_\omega \log \tr_\omega (\theta) \geq -C - C \tr_\omega (\theta), $$
where $\Delta_\omega$ is the Laplace operator associated with $\omega$.
Then let 
$$H = \log \tr_\omega (\theta) - B (\varphi -\sup_X \varphi)$$ 
for some $B >2 C$. Then
$$\Delta_\omega H \geq C \tr_\omega (\theta ) - C. $$ It follows from maximum principle and the $L^\infty$-estimate in Lemma \ref{lemma 2.1} that $\sup_X \tr_\omega \theta \le C$.

\end{proof}

Lemma \ref{lemma 2.2} immediately gives the uniform Ricci lower bound.

\begin{lemma} There exists $C=(X, \theta, p, K, A)>0$ such that $$Ric(g) \geq - C g. $$

\end{lemma}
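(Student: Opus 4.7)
The plan is to read off the Ricci curvature of $\omega$ directly from the Monge--Amp\`ere equation and then convert the resulting lower bound, which is initially expressed in terms of $\theta$, into a bound in terms of $\omega$ using the Schwarz-type inequality already established in Lemma \ref{lemma 2.2}.

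First, I would take $\log$ of both sides of equation \eqref{eqn:main1} to get $\log \omega^n = \lambda \varphi + \log \Omega$, and then apply $\ddbar$. Using the definition $\ric(\omega) = -\ddbar \log \omega^n$ and the identity $\ddbar \varphi = \omega - \theta$, this yields
\begin{equation*}
\ric(\omega) \;=\; -\lambda(\omega - \theta) + \ric(\Omega).
\end{equation*}
Invoking the standing hypothesis $\ric(\Omega) \geq -A\theta$ from \eqref{assump1}, I obtain the pointwise inequality
\begin{equation*}
\ric(\omega) \;\geq\; -\lambda\, \omega + (\lambda - A)\,\theta.
\end{equation*}

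At this point the bound on $\ric(\omega)$ still involves $\theta$, which is not yet controlled by $\omega$. Here is where Lemma \ref{lemma 2.2} enters: it gives a constant $C_0 = C_0(X,\theta,p,K,A) > 0$ with $\omega \geq C_0 \theta$, equivalently $\theta \leq C_0^{-1}\omega$. I would then split into the two cases according to the sign of $\lambda - A$: if $\lambda - A \geq 0$ the extra term is nonnegative and can simply be discarded, while if $\lambda - A < 0$ I replace $\theta$ by $C_0^{-1}\omega$ to absorb it into a multiple of $-\omega$. In either case, using $\lambda \in \{0,1\}$, one arrives at $\ric(g) \geq -C g$ for a constant $C$ depending only on $\lambda$, $A$, and $C_0$, hence only on $(X,\theta,p,K,A)$.

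I do not foresee a genuine obstacle: the derivation of the Ricci formula is a one-line computation from the equation, and the passage from a $\theta$-lower bound to a $g$-lower bound is exactly the content of Lemma \ref{lemma 2.2}. The only subtle point is bookkeeping the sign of $\lambda - A$ to ensure the resulting constant is uniform in both cases $\lambda = 0$ and $\lambda = 1$; this amounts to taking $C = \lambda + (A-\lambda)_+ C_0^{-1}$.
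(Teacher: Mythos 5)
Your proposal is correct and takes essentially the same route as the paper: both compute $\ric(g) = -\lambda g + \ric(\Omega) + \lambda\theta$ directly from the Monge--Amp\`ere equation, invoke the hypothesis $\ric(\Omega) \geq -A\theta$, and then convert the $\theta$-term into an $\omega$-term via the Schwarz-type bound $\omega \geq C_0\theta$ from Lemma~\ref{lemma 2.2}. Your explicit case analysis on the sign of $\lambda - A$ is a minor bookkeeping refinement of the same argument, not a different approach.
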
 

\begin{proof} We calculate
$$Ric(g) = -\lambda g + Ric(\Omega) + \lambda \theta \geq - \lambda g - (A-\lambda) \theta\geq - C g$$
for some fixed constant $C>0$ by  Lemma \ref{lemma 2.2}.

\end{proof}

We will now prove the uniform diameter bound. 

\begin{lemma} There exists $C=(X, \theta, p, K, A)>0$ such that $$Diam(X, g) \leq C. $$

\end{lemma}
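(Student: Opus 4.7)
The plan is to derive the uniform diameter bound by coupling the Ricci lower bound just established with a gradient estimate for $\varphi$ in the new metric $g$ and a volume non-collapsing estimate for $g$-balls, then closing with a packing argument. I would first establish the companion estimate $\|\nabla_g \varphi\|_{L^\infty(X,g)} \leq C$ asserted in part (1) of Theorem \ref{main1}. The point emphasized in the introduction is that the gradient must be measured in the new metric $g$, not in the reference $\theta$. Concretely, I would apply the maximum principle to
$$H = \log|\nabla_g \varphi|_g^2 - B \varphi$$
for a sufficiently large constant $B$, computing $\Delta_g H$ via Bochner on $(X,g)$ and absorbing the bad terms via the Ricci lower bound, the $L^\infty$ bound of Lemma \ref{lemma 2.1}, and the Monge--Amp\`ere equation itself.

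With the gradient estimate in hand, the next step is a uniform non-collapsing statement: there exist $r_0 > 0$ and $c > 0$ such that
$$\vol_g(B_g(x, r_0)) \geq c \quad \text{for every } x \in X.$$
Since $\omega^n = e^{\lambda \varphi} \Omega$ with $\varphi$ uniformly bounded, the measures $\omega^n$ and $\Omega$ are comparable up to a fixed constant. The gradient bound ensures that $\varphi$ oscillates by at most $Cr_0$ across any $g$-ball of radius $r_0$, so combining the $L^p$ integrability of $\Omega/\theta^n$ with a Kolodziej-type local estimate in a $\theta$-chart centered at $x$ yields a lower bound on $\vol_g(B_g(x,r_0))$ that is uniform in $x$.

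Finally, armed with non-collapsing, the Ricci lower bound $\ric(g) \geq -Cg$, and the fixed total volume $\vol_g(X) = \int_X \theta^n$, a standard packing argument closes the proof. Choose a minimizing $g$-geodesic $\gamma$ realizing $D = \diam(X, g)$, and place $N$ points along $\gamma$ at $g$-spacing $2 r_0$; the balls of radius $r_0$ around these points are pairwise disjoint, whence
$$N c \leq \sum_{i=1}^N \vol_g(B_g(p_i, r_0)) \leq \vol_g(X) \leq C,$$
forcing $N \leq C$ and thus $D \leq 2 r_0 N \leq C$.

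The main obstacle is the non-collapsing in the middle step. A priori $\omega$ has no upper bound in terms of $\theta$, so $g$-balls could in principle concentrate into sets that are very small in $\theta$-measure. Ruling this out requires a delicate coordination between the Monge--Amp\`ere structure, the $L^p$ bound on $\Omega$, and the gradient estimate; once it is secured, the Bishop--Gromov type packing argument is routine.
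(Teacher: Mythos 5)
Your overall strategy (gradient estimate $\Rightarrow$ volume non‑collapsing $\Rightarrow$ diameter via packing) is genuinely different from the paper's and the crucial middle step has a real gap. The packing step at the end and the gradient estimate in the first step are both plausible (and in fact the paper proves the gradient bound, independently of the diameter, via the Bochner identity on $(X,g)$ together with Lemma \ref{lemma 2.2}). The problem is the claim that $\vol_g(B_g(x,r_0))\geq c$ follows from the gradient bound plus $L^p$‑integrability plus a ``Kolodziej‑type local estimate.'' The $g$‑volume of a $g$‑ball is $\int_{B_g(x,r_0)}\omega^n$, which (since $\varphi$ is uniformly bounded) is comparable to $\int_{B_g(x,r_0)}\Omega$. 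Because $g\geq c\,\theta$ but we have \emph{no} uniform upper bound on $g$ in terms of $\theta$, the set $B_g(x,r_0)$ can be very small in $\theta$‑measure, and the $L^p$ bound on $\Omega/\theta^n$ by itself does nothing to rule out $\int_{B_g(x,r_0)}\Omega$ being tiny; Kolodziej's theory gives capacity‑mass inequalities and $L^\infty$ bounds, not lower bounds on $\omega^n$‑mass of $g$‑balls. In fact the paper records non‑collapsing in \eqref{noncol} explicitly as a \emph{consequence} of Theorem \ref{main1} — i.e.\ it is derived from the diameter bound via Bishop--Gromov, not used to prove it — and I do not see how to invert that order with the tools you cite.

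The paper's argument sidesteps $g$‑volume non‑collapsing altogether. It assumes $D=\diam(X,g)$ is large, packs disjoint balls $B_g(x_i,3)$ along a minimizing geodesic, and uses the fixed quantity $\int_X\theta^n$ (the $\theta$‑volume, not the $g$‑volume) to locate one ball with $\vol_\theta(B_g(x_i,3))\leq 6VD^{-1}$. It then builds an auxiliary density $F$ that equals a positive power of $D$ on $B_g(x_i,2)$ and equals $1$ outside $B_g(x_i,3)$, yet still satisfies a uniform $L^{p-\epsilon}$ bound — this is where the smallness of the $\theta$‑volume of the chosen ball is essential. Solving the auxiliary Monge--Amp\`ere equation $(\theta+\ddbar\phi)^n=e^{\lambda\phi}F\Omega$ and applying the maximum principle to a localized comparison of $\log(\hat\omega^n/\omega^n)$ bounds the ratio $\hat\omega^n/\omega^n$ on $B_g(x_i,1)$; since on that ball the ratio equals $D^{\epsilon/(p(p-\epsilon))}e^{\lambda(\phi-\varphi)}$ and both potentials are uniformly bounded, this forces $D\leq C$. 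So where you try to control a quantity ($g$‑volume of small balls) that is not a priori controllable without the conclusion, the paper instead perturbs the data and lets Kolodziej's $L^\infty$ estimate together with a Schwarz‑type comparison do the work. If you want to salvage your route, you would need an independent proof of the non‑collapsing estimate that exploits the Monge--Amp\`ere structure more quantitatively than a Bochner‑plus‑gradient argument — which is likely at least as hard as the diameter bound itself.
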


\begin{proof} We first fix a sufficiently small $\epsilon=\epsilon(p)>0$ so that $p- \epsilon>1$. Suppose $Diam(X, g) = D$ for some $D\geq 4$. Let $\gamma:[0,D]\to X$ be a normal minimal geodesic with respect to the metric $g$ and choose the points $\{x_i = \gamma(6i)\}_{i=0}^{[D/6]}$. It is clear that the balls $\{B_{g}(x_i,3)\}_{i=0}^{[D/6]}$ are  disjoint, so
$$\sum_{i=0}^{[D/6]} \vol_\theta\xk{B_g(x_i,3)   }\le \int_X \theta^n = V,$$
hence there exists a geodesic ball $B_g(x_i, 3)$ such that $$\vol_\theta\big(B_g(x_i, 3)\big) \leq 6V D^{-1}.$$ We fix such $x_i$ and construct a cut-off function $\eta(x) = \rho(r(x))  \geq 0$ with 
$$r(x) = d_g(x,x_i)$$ such that 
$$\eta=1~\textnormal{on}~ B_g(x_i, 1),  ~ \eta=0 ~\textnormal{outside}~B_g(x_i, 2), $$
and
$$ \rho\in [0,1] ,\quad \rho^{-1} (\rho')^2\le C(n), \quad |\rho''| \leq C(n). $$
Define a function $F>0$ on $X$ such that $F=1$ outside $B_g(x_i, 3)$, $F= D^{\frac{\epsilon}{p(p-\epsilon)}}$ on $B_g(x_i, 2)$, and 
$$\int_X F \Omega = [\theta]^n, ~~ \int_X \Big(\frac{F\Omega}{\theta^n} \Big)^{p-\epsilon} \theta^n \leq \Big( \int_X F^{\frac{p(p-\epsilon)}{\epsilon} } \theta^n\Big)^{\frac{\epsilon}{p}}\Big( \int_X \big(\frac{\Omega}{\theta^n} \big)^p \theta^n \Big)^{ \frac{p-\epsilon}{p}} \leq C$$
for some $C=C(X, \theta, p, K)>0$.

We now consider the equation
$$(\theta + \ddbar \phi)^n = e^{\lambda \phi} F \Omega. $$
By similar argument as before, $\| \phi - \sup_X \phi\|_{L^\infty} \leq C = C(X, \theta, p, K)$. Let $\hat g= \theta + \ddbar \phi$. Then on $B_g(x_i, 2)$, 
$$Ric(\hat g) = - \lambda \hat g + Ric(\Omega) + \lambda \theta, \quad Ric(g) = - \lambda g + Ric(\Omega) + \lambda \theta. $$
In particular, 
$$ \Delta_g \log \frac{\hat\omega^n} {\omega^n} = -\lambda n + \lambda \tr_g(\hat g) , $$
where $\Delta_g = \Delta_\omega$. Let 
$$H = \eta \Big( \log \frac{\hat\omega^n} {\omega^n}  -  \xk{(\varphi - \sup_X \varphi )- (\phi - \sup_X \phi) } \Big). $$  
On $B_g(x_i, 2), $ we have
$$ \Delta_g  H = -(\lambda +1) n + (\lambda+1)  \tr_g(\hat g) \geq -2n + n \left( \frac{\hat\omega^n} {\omega^n} \right)^{1/n} .$$
In general,  on the support of $\eta$, we have 
\begin{eqnarray*}
\Delta_g H &\geq& \eta \left(-2n + n \Big(  \frac{\hat \omega^n} {\omega^n} \Big)^{1/n} \right) + 2 \eta^{-1} Re\left(  \nabla H \cdot  \nabla \eta \right) - 2 \frac{ H  |\nabla \eta|^2}{\eta^2} + \eta^{-1}H \Delta_g \eta \\
&\geq &  \eta^{-1}\bk{C \eta^2 e^{H/(n\eta)} + 2 Re\big(  \nabla H \cdot  \nabla \eta \big) - 2 \frac{ H  |\nabla \eta|^2}{\eta} + H \Delta_g \eta - 2n \eta^2}.
%
%
%
\end{eqnarray*}
We may assume $\sup_X H>0$, otherwise we already have upper bound of $H$. The maximum of $H$ must lie at $B_g(x_i,2)$ and at this point $$\Delta_g H\le 0 ,\quad \abs{\nabla H} = 0.$$ By Laplacian comparison we have
$$\Delta_g \eta = \rho' \Delta r + \rho'' \ge -C,\quad \frac{\abs{\nabla \eta}}{\eta} = \frac{(\rho')^2}{\rho}\le C.$$ 
 So at the maximum of $H$, it holds that
$$0\ge C \eta^2 e^{H/(n\eta)} - C H - 2n\ge C H^2 - C H - 2n, $$ therefore $\sup_X H\le C$. In particular on the ball $B_g(x_i,1)$ where $\eta\equiv 1$, it follows that $\frac{\hat \omega^n}{\omega^n}\le C$. From the definition of $\hat\omega$ and $\omega$, 
$$C\ge  \frac{\hat \omega^n}{\omega^n} = D^{\frac{\epsilon}{p (p-\epsilon)}} e^{\lambda (\phi - \varphi)}.$$Combined with the $L^\infty$-estimate of $\phi$ and $\varphi$, we conclude that 
$$D\le C = C(n,p,\theta,A, K).$$

\end{proof}

\begin{lemma} There exists $C=(X, \theta, p, K, A)>0$ such that $$ \sup_X |\nabla_g \varphi|_g \leq C. $$

\end{lemma}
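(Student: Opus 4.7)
The plan is to apply the maximum principle to the auxiliary function
\[
H := \log(1+Q) - A(\varphi - \sup_X \varphi), \qquad Q := |\nabla_g \varphi|^2_g,
\]
where $A = A(X, \theta, p, K, A) > 0$ is a large constant to be chosen. By Lemma \ref{lemma 2.1}, $\varphi - \sup_X \varphi$ is uniformly bounded, so a bound on $\sup_X H$ translates into a bound on $\sup_X Q$. The point emphasized in the introduction, and the main difference from the gradient estimates of Bl\"ocki or Phong-Sturm, is that every step of the calculation is carried out with respect to the evolving metric $g$ instead of the fixed background $\theta$.

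The core computation is the Bochner formula on the K\"ahler manifold $(X,g)$,
\[
\Delta_g Q \geq |\nabla \bar\nabla \varphi|^2_g + \ric_g(\nabla \varphi, \bar\nabla \varphi) + 2\,\mathrm{Re}\,\langle \nabla \Delta_g \varphi, \bar\nabla \varphi\rangle_g,
\]
in which two terms are already under control: the Ricci lower bound $\ric(g) \geq -Cg$ established above gives $\ric_g(\nabla \varphi,\bar\nabla \varphi) \geq -C Q$, while tracing equation \eqref{eqn:main1} yields $\Delta_g \varphi = n - \tr_g \theta$, uniformly bounded by Lemma \ref{lemma 2.2}. At an interior maximum point $x_0$ of $H$, the condition $\nabla H(x_0) = 0$ forces the relation $|\nabla Q|^2_g = (1+Q)^2 A^2 Q$, and combining this with $\Delta_g H(x_0) \leq 0$ produces, after choosing $A$ large enough to dominate the Ricci and $\Delta_g \varphi$ contributions, an inequality essentially of the form
\[
\frac{|\nabla \bar\nabla \varphi|^2_g}{1+Q} + c A\, Q \;\leq\; C(1 + |\text{cross term}|).
\]

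The principal obstacle is the cross term $-2\,\mathrm{Re}\,\langle \nabla \tr_g \theta, \bar\nabla \varphi\rangle_g$ arising from $\nabla \Delta_g \varphi = -\nabla \tr_g \theta$, which encodes third derivatives of $\varphi$. I see two natural strategies to absorb it. The direct route is to expand $\nabla \tr_g \theta$ from $\tr_g \theta = g^{i\bar j}\theta_{i\bar j}$, which produces $\nabla \bar\nabla \varphi$ (through $\nabla g^{i\bar j}$) plus bounded terms, and then apply Cauchy-Schwarz
\[
2\,\big|\langle \nabla \tr_g \theta, \bar\nabla \varphi\rangle_g\big| \;\leq\; \epsilon\,|\nabla \bar\nabla \varphi|^2_g + C\epsilon^{-1}(Q + 1),
\]
with $\epsilon$ small enough that the good Bochner term dominates. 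A cleaner alternative is to add a correction $-B \log \tr_g \theta$ to $H$; the Schwarz-lemma inequality $\Delta_g \log \tr_g \theta \geq -C - C \tr_g \theta$ from Lemma \ref{lemma 2.2} then contributes a favorable term $|\nabla \tr_g \theta|^2_g/(\tr_g \theta)^2$ which, after one further Cauchy-Schwarz with suitable $B$, swallows the cross term. Either strategy forces $Q(x_0) \leq C$, and together with the $L^\infty$ bound on $\varphi$ this yields the desired global estimate $\sup_X |\nabla_g \varphi|_g \leq C$.
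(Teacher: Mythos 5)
Your general framework---applying a maximum principle to a gradient-type quantity measured in the evolving metric $g$, with the Bochner formula taken with respect to $g$, and identifying the cross term involving $\nabla\tr_g\theta$ as the essential obstruction---matches the paper's key insight. However, both of your proposed routes for absorbing the cross term are flawed. The direct Cauchy--Schwarz route cannot work because $|\nabla\bar\nabla\varphi|^2_g = g^{i\bar l}g^{k\bar j}\varphi_{i\bar j}\varphi_{k\bar l}$ is already \emph{uniformly bounded}: $\varphi_{i\bar j}=g_{i\bar j}-\theta_{i\bar j}$, so in eigencoordinates of $\theta$ relative to $g$ one gets $\sum_i(1-\lambda_i)^2$ with $\sum_i\lambda_i=\tr_g\theta\le C$ by Lemma \ref{lemma 2.2}. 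A quantity bounded by a fixed constant cannot absorb an a priori unbounded cross term; moreover $\nabla_k\tr_g\theta=g^{i\bar j}\nabla_k\theta_{i\bar j}=-g^{i\bar j}\nabla_k\varphi_{i\bar j}$ is a genuine third covariant derivative of $\varphi$ and does not reduce to the second derivative $\nabla\bar\nabla\varphi$ plus bounded terms, so the claimed inequality $2|\langle\nabla\tr_g\theta,\bar\nabla\varphi\rangle|\le\epsilon|\nabla\bar\nabla\varphi|^2+C\epsilon^{-1}(Q+1)$ has no justification. Your second strategy also fails as stated: adding $-B\log\tr_g\theta$ contributes $-B\Delta_g\log\tr_g\theta\le B(C+C\tr_g\theta)\le BC'$ by the Schwarz-lemma bound, which is merely a bounded constant with no surviving favorable gradient term. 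What the paper actually adds is $+\alpha\tr_g\theta$: using $\Delta_g\tr_g\theta=\tr_g\theta\cdot\Delta_g\log\tr_g\theta+|\nabla\tr_g\theta|^2/\tr_g\theta$ together with $\tr_g\theta\le C$ gives $\Delta_g(\alpha\tr_g\theta)\ge-\alpha C+\alpha c_0|\nabla\tr_g\theta|^2$, and it is precisely the positive $\alpha c_0|\nabla\tr_g\theta|^2$ term that absorbs the cross term via Cauchy--Schwarz.

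There is also a structural problem with the auxiliary function $H=\log(1+Q)-A(\varphi-\sup_X\varphi)$. At an interior maximum, $\nabla H=0$ gives $|\nabla Q|^2=A^2Q(1+Q)^2$, and Kato's inequality $|\nabla\nabla\varphi|^2+|\nabla\bar\nabla\varphi|^2\ge|\nabla Q|^2/(2Q)$ then makes the leading contribution to $\Delta_g H$ equal to $\tfrac{A^2}{2}(1+Q)-A^2Q=\tfrac{A^2}{2}(1-Q)$, which carries the \emph{wrong sign}: $\Delta_g H\le 0$ at the maximum yields only a lower bound on $Q$. Note also that $-A\Delta_g\varphi=-A(n-\tr_g\theta)$ is uniformly bounded by Lemma \ref{lemma 2.2} and gives no favorable term proportional to $Q$, in contrast to gradient estimates made against a fixed background. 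The paper instead takes $H=|\nabla\varphi|^2_g/(B-\varphi)+\alpha\tr_g\theta$; there the $\nabla H=0$ condition together with Kato produces a genuine quadratic term of the form $H^2/(4(B-\varphi))$, which, combined with the $\alpha\tr_g\theta$ correction, closes the maximum-principle argument.
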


\begin{proof} Straightforward calculations show that 
$$\Delta_g \varphi = n - \tr_g (\theta), $$
\begin{eqnarray*}
 \Delta_g |\nabla \varphi|^2_g &=& |\nabla \nabla \varphi|^2 + |\nabla \bar{\nabla} \varphi|^2 + g^{i\bar l} g^{k \bar j} R_{i\bar j} \varphi_k \varphi_{\bar l} - 2\nabla \varphi \cdot \nabla \tr_g(\theta) \\
 &\geq & |\nabla \nabla \varphi|^2 + |\nabla \bar{\nabla} \varphi|^2  - C |\nabla \varphi|^2 - 2 \nabla \varphi \cdot \nabla \tr_g(\theta),
\end{eqnarray*}
and
$$\Delta_g \tr_g\theta = \tr_g \theta\cdot \Delta_g\log\tr_g\theta + \frac{\abs{\nabla \tr_g\theta}}{\tr_g\theta}\ge -C + c_0\abs{\nabla \tr_g \theta}$$
for some uniform constant $c_0, C>0$. 
We choose constants $\alpha$ and $B$ satisfying 
$$\alpha > 4 c_0^{-1}>4, ~B>\sup_X \varphi + 1$$
 and define 
 $$H = \frac{\abs{\nabla \varphi}}{B - \varphi} + \alpha \tr_g \theta.$$ 
 Then we have
\begin{equation}\label{eqn:grad 1}\begin{split}
\Delta H & \ge \frac{\abs{\nabla \nabla \varphi} + \abs{\nabla \bar \nabla \varphi}}{B-\varphi} - C \frac{\abs{\nabla \varphi}}{B-\varphi}- \frac{\abs{\nabla \varphi} (\tr_g \theta - n)}{(B-\varphi)^2} - 2 (1+\alpha)\frac{\innpro{ \nabla \varphi, \nabla \tr_g\theta  }}{B-\varphi}\\
& \quad -\alpha C + \alpha c_0\abs{\nabla \tr_g\theta} + 2\innpro{\frac{\nabla \varphi}{B-\varphi},\nabla H}.
\end{split}\end{equation}
We may assume at the maximum point $z_{max}$ of $H$, $|\nabla \varphi|>\alpha$ and $H>0$, otherwise we are done. At $z_{max}$,
$$\nabla H = 0, ~\Delta H \le 0$$
and so at $z_{max}$
\begin{align*}
\nabla |\nabla \varphi| = \frac 1 2 \bk{ - H \frac{\nabla \varphi}{|\nabla \varphi|} - \alpha(B-\varphi) \frac{\nabla \tr_g \theta}{|\nabla \varphi|} + \alpha \frac{\tr_g \theta \nabla \varphi}{|\nabla \varphi|} }.
\end{align*} 
By Kato's inequality $\abs{ \nabla |\nabla \varphi|  }\le \frac{\abs{\nabla \nabla \varphi} + \abs{\nabla \bar \nabla \varphi}}{2}$, it follows that
\begin{equation}\label{eqn:grad 2}\begin{split}
\frac{\abs{\nabla \nabla \varphi} + \abs{\nabla \bar \nabla \varphi}}{B-\varphi} & \ge \frac 1{ 2(B-\varphi)} \bk{ H^2 + \alpha^2 (B-\varphi)^2 (\tr_g \theta)^2 + \alpha^2 \frac{\abs{\nabla \tr_g \theta}}{\abs{\nabla \varphi}} \\
&\quad - 2 \alpha H (B-\varphi) \tr_g \theta 
 - 2 \alpha H \frac{|\nabla \tr_g \theta|}{|\nabla \varphi|} - 2 \alpha^2 (B-\varphi) \tr_g \theta \frac{|\nabla\tr_g \theta|}{|\nabla \varphi|}   } \\
 &\ge \frac{H^2}{4 (B-\varphi)}- C H -\frac{\abs{\nabla \tr_g \theta}}{B-\varphi} - C |\nabla \tr_g\theta|
\end{split}\end{equation}
for some uniform constant $C>0$.
After substituting \eqref{eqn:grad 2} to \eqref{eqn:grad 1} and applying Cauchy-Schwarz inequality, we have at $z_{max}$ 
\begin{align*}0\ge& \frac{H^2}{4(B-\varphi)} - CH -C - \frac{2\abs{\nabla \tr_g\theta}}{B-\varphi} - C |\nabla \tr_g\theta| + 4 \abs{\nabla \tr_g\theta}\\
\ge & \frac{H^2}{4(B-\varphi)} - CH -C,\end{align*}
for some uniform constant $C>0$. Therefore $\max_X H\le C$ for some $C=C(X,\theta,\Omega,A,p,K)$.   The lemma then immediately follows from  Lemma \ref{lemma 2.1} and Lemma \ref{lemma 2.2}.

\end{proof}

\section{ Proof of Proposition \ref{main2} }

In this section, we will prove Proposition \ref{main2} by applying the techniques in \cite{BEGZ, K, EGZ1, DP}. 

Let $X$ be a K\"ahler manifold of dimension $n$. Suppose $\alpha$ is nef class on $X$ of numerical dimension $\kappa\geq 0$.  Let  $\chi \in \alpha$ be a smooth closed $(1,1)$-form. We define the extremal function $V_\chi$ by
\begin{equation}
V_\chi =   \sup\{ \phi ~|~\chi+\ddbar \phi \geq 0, ~ \phi \leq 0\}  .
\end{equation}
Let $\theta$ be a fixed smooth K\"ahler metric on $X$. Then we define the perturbed extremal function $V_t$ for $t\in (0, 1]$ by
\begin{equation}
V_t =  \sup\{ \phi ~|~\chi+ t \theta+\ddbar \phi \geq 0, ~ \phi \leq 0\} .
\end{equation}
The above extremal functions were introduced in \cite{BEGZ} when $\alpha$ is big.  

We first rewrite the equation (\ref{eqn:main2}) for $\lambda=0$ as  follows 
\begin{equation}\label{eqn:3.11}(\chi + t\theta + \ddbar \varphi_t)^n = t^{n-\kappa} e^{-f+ c_t} \theta^n,\quad \sup_X \varphi_t = 0, \end{equation}
$t\in (0, 1]$ by letting $\Omega = e^{-f}\theta^n$, where  $c_t$ is the normalizing constant satisfying 
$$t^{n-\kappa} \int_X e^{-f+c_t} \theta^n = \int_X (\chi + t \theta)^n . $$  
$f$ satisfies the following uniform bound
$$\int_X e^{-pf} \theta^n \leq K, $$
for some $p>1$ and $K>0$.



The following definition is an extension of the capacity introduced in  \cite{K, EGZ1, DP, BEGZ}.

\begin{defn} We define the capacity $Cap_{\chi_t}( \mathcal{K})$ for a  subset $  \mathcal{K} \subset X$ by
\begin{equation}
Cap_{\chi_t}( \mathcal{K}) = \sup\Big\{ \int_ \mathcal{K} (\chi_t + \ddbar u)^n ~|~ u\in \textnormal{PSH}(X, \chi_t), ~ 0\leq u - V_t \leq 1 \Big\},
\end{equation}where $\chi_t = \chi + t \theta$ is the reference metric in \eqref{eqn:3.11}.  We also define the extremal function $V_{t,  \mathcal{K}}$ by
\begin{equation}
V_{t, \mathcal{K}} = \sup \left\{ u \in \textnormal{PSH}(X, \chi_t)~|~u \leq 0, ~\textnormal{on} ~  \mathcal{K} \right\} .
\end{equation}

\end{defn}

If $K$ is open, then we have 
\begin{enumerate}
\item $V_{t,\mathcal{K}}  \in \textnormal{PSH}(X, \chi_t)\cap L^\infty(X)$, 
\medskip
\item $(\chi_t + \ddbar V_{t,\mathcal{K}} )^n = 0$ on $X\setminus \overline{ \mathcal{K}}$.  

\end{enumerate}

\begin{lemma}\label{lemma 3.1}Let $\varphi_t$ be the solution to \eqref{eqn:3.11}. Then there exist $\delta= \delta(X, \chi, \theta)>0$ and $C=C(X, \chi, \theta, p, K)>0$ such that for any open set $ \mathcal{K}\subset X$ and $t\in (0, 1]$,
\begin{equation} \label{capm}
\frac{1}{[\chi_t^n]}\int_ \mathcal{K} (\chi_t+\ddbar\varphi_t)^n \leq C e^{-\delta \left( \frac{[\chi_t^n]}{Cap_{\chi_t}(K) }\right)^{\frac{1}{n}}}.
\end{equation} 
\end{lemma}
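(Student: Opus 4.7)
The plan is to follow the Kolodziej--EGZ--BEGZ framework, reducing \eqref{capm} to a H\"older estimate combined with a uniform volume--capacity comparison built on the extremal function $V_t$.

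First I would apply H\"older's inequality. Writing $(\chi_t + \ddbar\varphi_t)^n = t^{n-\kappa}e^{-f+c_t}\theta^n$ with $c_t$ uniformly bounded and $\|e^{-f}\|_{L^p(\theta^n)}^p \le K$, I get
\[
\int_{\mathcal K}(\chi_t + \ddbar\varphi_t)^n \le C\,t^{n-\kappa}\,\mathrm{Vol}_\theta(\mathcal K)^{1/q}, \qquad q = p/(p-1).
\]
The numerical dimension hypothesis pins down $[\chi_t^n]$: in the binomial expansion of $[\chi + t\theta]^n$, the terms $t^k[\chi]^{n-k}\cdot[\theta]^k$ with $n-k > \kappa$ vanish, while the leading surviving term $t^{n-\kappa}\binom{n}{n-\kappa}[\chi]^\kappa\cdot[\theta]^{n-\kappa}$ is strictly positive by nefness of $[\chi]$ and K\"ahlerness of $[\theta]$. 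Hence $[\chi_t^n] \asymp t^{n-\kappa}$ uniformly in $t \in (0,1]$, and the estimate above rewrites as $[\chi_t^n]^{-1}\int_{\mathcal K}(\chi_t + \ddbar\varphi_t)^n \le C\,\mathrm{Vol}_\theta(\mathcal K)^{1/q}$.

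Next I would establish the uniform volume--capacity estimate $\mathrm{Vol}_\theta(\mathcal K) \le C\exp\bigl(-\delta([\chi_t^n]/\mathrm{Cap}_{\chi_t}(\mathcal K))^{1/n}\bigr)$ through two complementary bounds on the extremal function $V^*_{t,\mathcal K}$. Since $\chi_t \le \chi + \theta$ on $(0,1]$, the inclusion $\mathrm{PSH}(X,\chi_t) \subset \mathrm{PSH}(X,\chi+\theta)$ together with Skoda--Tian integrability for the fixed form $\chi + \theta$ supplies uniform $\alpha, C > 0$ with $\int_X e^{-\alpha u}\theta^n \le C$ for all $u \in \mathrm{PSH}(X,\chi_t)$, $\sup u = 0$; applied to $V^*_{t,\mathcal K} - \sup V^*_{t,\mathcal K}$ and using $V^*_{t,\mathcal K} \le 0$ on $\mathcal K$, this yields $\mathrm{Vol}_\theta(\mathcal K) \le C e^{-\alpha \sup V^*_{t,\mathcal K}}$. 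On the other side, setting $M := 1 + \sup_X(V^*_{t,\mathcal K} - V_t)$ and using the convex combination $u = V_t + (V^*_{t,\mathcal K} - V_t)/M$ as a test function in the capacity---it lies in $\mathrm{PSH}(X,\chi_t)$ and satisfies $0 \le u - V_t \le 1$---the estimate $(\chi_t + \ddbar u)^n \ge M^{-n}(\chi_t + \ddbar V^*_{t,\mathcal K})^n$ combined with the BEGZ full-mass concentration $\int_{\mathcal K}(\chi_t + \ddbar V^*_{t,\mathcal K})^n = [\chi_t^n]$ gives $\mathrm{Cap}_{\chi_t}(\mathcal K) \gtrsim [\chi_t^n]/M^n$, hence $M \gtrsim ([\chi_t^n]/\mathrm{Cap}_{\chi_t}(\mathcal K))^{1/n}$.

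The main obstacle is to reconcile the two sup quantities $\sup V^*_{t,\mathcal K}$ (controlling volume) and $\sup(V^*_{t,\mathcal K} - V_t)$ (controlling capacity): in the collapsing regime $t \to 0$, where $[\chi]$ is only nef rather than K\"ahler, the extremal function $V_t$ may develop singularities of increasing depth so the two quantities can differ by the full $L^\infty$-oscillation of $V_t$. I would bridge the gap by truncating $V_t$ to $\max(V_t, -A)$ for an appropriate threshold $A = A(t)$, using the uniform $\alpha$-invariance to show $\{V_t < -A\}$ has negligible $\theta^n$-volume so that the two sups agree up to a controllable error absorbable into the constants $C$ and $\delta$. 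Substituting the resulting volume--capacity estimate into the H\"older step then delivers \eqref{capm}.
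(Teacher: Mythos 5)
Your overall strategy—rewriting the right-hand side via the equation and the $[\chi_t^n]\asymp t^{n-\kappa}$ estimate, invoking a uniform Skoda--Tian exponential integrability for $\chi_t$-psh functions, and using the convex-combination test function $u=V_t+(V^*_{t,\mathcal K}-V_t)/M$ against the mass concentration $\int_{\overline{\mathcal K}}(\chi_t+\ddbar V^*_{t,\mathcal K})^n=[\chi_t^n]$—is essentially the paper's argument, modulo a minor re-packaging (you detour through $\mathrm{Vol}_\theta(\mathcal K)^{1/q}$, whereas the paper inserts the factor $e^{-\delta V_{t,\mathcal K}/q}$ directly inside $\int_{\mathcal K}e^{-f}\theta^n$ before applying H\"older; the two bookkeepings are interchangeable).

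The genuine problem is the ``main obstacle'' you raise and the truncation you propose to resolve it. There is no gap between the two sup quantities: since $V_t\le 0$ with $\sup_X V_t=0$, one has pointwise $V^*_{t,\mathcal K}-V_t\ge V^*_{t,\mathcal K}$, so $\sup_X(V^*_{t,\mathcal K}-V_t)\ge M_{t,\mathcal K}:=\sup_X V^*_{t,\mathcal K}$; and conversely $V^*_{t,\mathcal K}-M_{t,\mathcal K}$ is a nonpositive $\chi_t$-psh function, hence a competitor in the envelope defining $V_t$, so $V^*_{t,\mathcal K}-M_{t,\mathcal K}\le V_t$, i.e.\ $\sup_X(V^*_{t,\mathcal K}-V_t)\le M_{t,\mathcal K}$. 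Thus the two sups are equal, and the depth of singularities of $V_t$ in the collapsing limit is irrelevant at this step. This elementary observation is precisely the content of the paper's inequality \eqref{crt1}, and it is the crux of the proof. Your substitute mechanism---truncating $V_t$ at level $-A(t)$ and claiming the resulting error is ``absorbable into the constants $C$ and $\delta$''---is both unnecessary and unjustified as stated: the error you would accrue depends on $t$ and $\mathcal K$ in an uncontrolled way, and absorbing a $t$-dependent additive error into a multiplicative constant $C$ or into $\delta$ in an exponential estimate is not a legitimate operation without further argument. Replacing the truncation paragraph by the two-line competitor observation above (which is exactly \eqref{crt1}) closes the gap and makes your proof correct; you should also split the final capacity inequality into the cases $M_{t,\mathcal K}>1$ and $M_{t,\mathcal K}\le 1$ as the paper does, since the normalization $M$ you introduce only yields $(\chi_t+\ddbar u)^n\ge M^{-n}(\chi_t+\ddbar V^*_{t,\mathcal K})^n$ when $M\ge 1$.
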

\begin{proof}
Since $[\chi^m] = 0$ for $\kappa +1\le m\le n$
\begin{align*}
[\chi_t^n] = \int_X \chi_t^n = \int_X \sum_{k=0}^n\binom{n}{k}\chi^k\wedge t^{n-k} \theta^{n-k} = \int_X \sum_{k=0}^\kappa \binom{n}{k}\chi^k\wedge t^{n-k} \theta^{n-k} = O(t^{n-\kappa}).
\end{align*}
It follows that the normalizing constant $c_t$ in \eqref{eqn:3.11} is uniform bounded. Let $M_{t,\mathcal{K}} = \sup_X V_{t,\mathcal{K}}$. Then we have
\begin{align*}
\frac{1}{[\chi_t^n]} \int_{\mathcal K}(\chi_t + \ddbar \varphi_t)^n & = \frac{t^{n-\kappa} e^{c_t}}{[\chi_t^n]}\int_K e^{-f} \theta^n\\
&\le \frac{t^{n-\kappa} e^{c_t}}{[\chi_t^n]} \int_{\mathcal K} e^{-f} e^{-\delta V_{t,\mathcal{K}}/q} \theta^n,  \quad\text{since $V_{t,\mathcal{K}}\le 0$ on $K$}\\
& \le  \frac{t^{n-\kappa} e^{c_t}}{[\chi_t^n]} e^{-\delta M_{t,\mathcal{K}}/q} \int_X e^{-f} e^{-\delta(V_{t,\mathcal{K}} - M_{t,\mathcal{K}})/q} \theta^n\\
&\le \frac{t^{n-\kappa} e^{c_t}}{[\chi_t^n]} e^{-\delta M_{t,\mathcal{K}}/q} \bk{\int_X e^{-p f}\theta^n}^{1/p} \bk{ \int_X e^{-\delta (V_{t,\mathcal{K}} - M_{t,\mathcal{K}}) }\theta^n  }^{1/q}\\
&\le C e^{-\delta M_{t,\mathcal{K}}/q },
\end{align*}
where $\frac{1}{p} + \frac 1 q = 1$.
Obviously,  there exists $\gamma=\gamma (X, \chi, \theta)>0$ such that for all $t\in (0, 1]$, 
$$V_{t,\mathcal{K}}\in \textnormal{PSH}(X,\gamma \theta). $$ We apply the global  H\"ormander's estimate (\cite{T1}) so that there exists $\delta=\delta(X, \chi, \theta)>0$ such that 
$$ \int_X e^{-\delta (V_{t,\mathcal{K}} - \sup_{X} V_{t,\mathcal{K}})}\theta^n\le C_\delta.$$
 To complete the proof, it suffices to to show 
\begin{equation}\label{eqn:MtK}
M_{t,\mathcal{K}} + 1\ge \bk{ \frac{[\chi_t^n]}{Cap_{\chi_t}(  \mathcal{K})}  }^{1/n}. 
\end{equation}
First we observe that by definition
$$\sup_X \bk{ (V_{t,\mathcal{K}} - \sup_X V_{t,\mathcal{K}}) - V_t } \leq 0, $$
since $V_{t,\mathcal{K}} -\sup_X V_{t,\mathcal{K}}\in \textnormal{PSH}(X, \chi_t)$ is nonpositive. On the other hand, $V_{t,\mathcal{K}} \geq V_t$. 
This immediately implies that 
\begin{equation}\label{crt1}
0\leq  V_{t,\mathcal{K}} - V_t \leq \sup_X V_{t,\mathcal{K}}=M_{t,\mathcal{K}} . 
\end{equation}
We break the rest of the proof into two cases. 

\medskip

\noindent{ $\bullet$ \bf{The case when} $M_{t, K} > 1$.} We let 
$$\psi_{t, \mathcal{K}} =M_{t,\mathcal{K}}^{-1}(V_{t,\mathcal{K}} - V_t) + V_t.$$ 
Then
$$V_t \leq \psi_{t, \mathcal{K}} \leq V_t +1$$
and by \eqref{crt1}. \begin{equation}\label{eqn:3.17}\begin{split}
\frac{1}{M_{t,\mathcal{K}}^n}&= \frac{1}{M_{t,\mathcal{K}}^n} \frac{\int_X (\chi_t + \ddbar V_{t,\mathcal{K}})^n}{[\chi_t^n]} =\frac{1}{[\chi_t^n]} \int_{\overline{ \mathcal{K}}} \left( M_{t,\mathcal{K}}^{-1} \chi_t + \ddbar (M_{t,\mathcal{K}}^{-1} V_{t,\mathcal{K}}) \right)^n\\
&\leq \frac{1}{[\chi_t^n]} \int_{\overline{ \mathcal{K}}} \left( M_{t,\mathcal{K}}^{-1} \chi_t + \ddbar (M_{t,\mathcal{K}}^{-1} V_{t,\mathcal{K}}) + (1- M_{t,\mathcal{K}}^{-1}) (\chi_t + \ddbar V_t) ) \right)^n  \\ 
&= \frac{1}{[\chi_t^n]} \int_{\overline{ \mathcal{K}}} (\chi_t + \ddbar\psi_{t, \mathcal{K}})^n  \\
&\leq \frac{ Cap_{\chi_t}(  \mathcal{K})}{[\chi_t^n]}. 
\end{split}\end{equation}

\medskip

\noindent{$\bullet$ \bf{The case when} $M_{t, K} \leq 1$.}  By \eqref{crt1}
$$0\leq  V_{t,\mathcal{K}} - V_t \leq \sup_X V_{t,\mathcal{K}}=M_{t,\mathcal{K}} \leq 1. $$
Now
\begin{equation}\label{eqn:3.18}[\chi_t^n] = \int_{\overline{ \mathcal{K}}} (\chi_t + \ddbar V_{t,\mathcal{K}})^n\leq Cap_{\chi_t}(\overline{ \mathcal{K}}). \end{equation} So in this case $\frac{[\chi_t^n]}{Cap_{\chi_t}( \mathcal{K})}\le 1$. 

Combining \eqref{eqn:3.17} and \eqref{eqn:3.18}, \eqref{eqn:MtK} holds and we complete the proof of Lemma \ref{lemma 3.1}.

\end{proof}

The following is an immediate corollary of  Lemma \ref{lemma 3.1}.
\begin{corr}\label{co 3.1}
There exists $C=C(X, \chi, \theta, p, K)>0$ such that for all $t\in (0,1]$, we have
$$\frac{1}{[\chi_t^n]}\int_ \mathcal{K} (\chi_t + \ddbar \varphi_t)^n \le C \bk{\frac{Cap_{\chi_t}( \mathcal{K})}{[\chi_t^n]}   }^2.$$
\end{corr}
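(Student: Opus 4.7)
The plan is to deduce this directly from Lemma \ref{lemma 3.1} using only the elementary fact that exponential decay dominates any polynomial decay. The corollary is essentially a weakening of the exponential bound to a polynomial one, so no new geometric input is needed.

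More precisely, I would set $y := Cap_{\chi_t}(\mathcal{K})/[\chi_t^n]$, and rewrite Lemma \ref{lemma 3.1} as
\begin{equation*}
\frac{1}{[\chi_t^n]}\int_{\mathcal{K}} (\chi_t+\ddbar \varphi_t)^n \le C\,e^{-\delta\, y^{-1/n}}.
\end{equation*}
The claim then reduces to the purely elementary inequality
\begin{equation*}
e^{-\delta\, y^{-1/n}} \le C'(n,\delta)\, y^{2}, \qquad \text{for all } y>0,
\end{equation*}
after which the corollary follows with constant $CC'$.

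To verify this inequality I would split into two regimes. For $y\ge 1$, the left-hand side is bounded by $1$ and the right-hand side is bounded below by $1$, so we can take $C'\ge 1$. For $y\in(0,1)$, substitute $s=y^{-1/n}\in(1,\infty)$, so that $y^{2}=s^{-2n}$ and the inequality becomes $s^{2n}e^{-\delta s}\le C'$, which holds because $s\mapsto s^{2n}e^{-\delta s}$ is continuous and tends to $0$ as $s\to\infty$, hence is bounded on $[1,\infty)$. The constant $C'$ depends only on $n$ and $\delta$, and $\delta=\delta(X,\chi,\theta)$ by Lemma \ref{lemma 3.1}, so the final constant has the stated dependence $C=C(X,\chi,\theta,p,K)$.

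There is no real obstacle here; the substantive pluripotential-theoretic work (the capacity comparison via H\"ormander's $L^2$ estimate and the extremal function $V_{t,\mathcal{K}}$) was already carried out in the proof of Lemma \ref{lemma 3.1}. The only mild point to note is that for an open set $\mathcal{K}$ the capacity $Cap_{\chi_t}(\mathcal{K})$ is strictly positive, so the quantity $y$ is well-defined; and the trivial bound $\int_{\mathcal{K}}(\chi_t+\ddbar\varphi_t)^n \le [\chi_t^n]$ used in the $y\ge 1$ case is immediate from the nonnegativity of the Monge-Amp\`ere measure.
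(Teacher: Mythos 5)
Your proposal is correct and follows essentially the same route as the paper: the paper also deduces the corollary from Lemma \ref{lemma 3.1} via the elementary inequality $x^2 e^{-\delta x^{1/n}}\le C$ for all $x\in(0,\infty)$, which is the same estimate you prove after the substitution $s=y^{-1/n}$ (equivalently $x=s^n$). The only cosmetic difference is that you work with $y=Cap_{\chi_t}(\mathcal K)/[\chi_t^n]$ rather than its reciprocal, and you spell out the two regimes; the paper simply invokes the inequality directly.
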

\begin{proof}
This follows from Lemma \ref{lemma 3.1} and the elementary inequality that $x^2 e^{-\delta x^{1/n}}\le C$ for some uniform $C>0$ and  all $x\in (0,\infty)$.
\end{proof}

\begin{lemma}\label{lemma 3.2} Let $u \in \textnormal{PSH}(X, \chi_t) \cap L^\infty(X)$. For any $s>0$, $0\leq r \leq 1$ and $t\in (0,1]$, we have
\begin{equation}\label{eqn:lemma 3.4}
r^n Cap_{\chi_t} (u - V_t < -s-r) \leq \int_{\{u - V_t < -s\}} (\chi_t + \ddbar u)^n. 
\end{equation}

\end{lemma}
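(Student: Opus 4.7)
The plan is to derive the estimate from the Bedford--Taylor comparison principle, using the standard trick of forming a convex combination of a test function with the extremal function $V_t$.

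\medskip

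\noindent\textbf{Setup.} Let $v \in \textnormal{PSH}(X, \chi_t) \cap L^\infty(X)$ be any admissible candidate for $Cap_{\chi_t}$, i.e.\ $0 \leq v - V_t \leq 1$. For the given $r \in [0,1]$, introduce the convex combination
$$w := r v + (1-r) V_t \in \textnormal{PSH}(X, \chi_t),$$
which satisfies $V_t \leq w \leq V_t + r$. Since $t>0$ makes $\chi_t$ K\"ahler, $V_t$ is bounded, so $w$ is bounded.

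\medskip

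\noindent\textbf{Sandwiching the sublevel sets.} Let $E := \{u - V_t < -s-r\}$ and $F := \{u - V_t < -s\}$. The first key observation is that
$$E \;\subseteq\; \{u + s + r < w\} \;\subseteq\; F.$$
Indeed, on $E$ we have $u + s + r < V_t \leq w$, giving the left inclusion; conversely, if $u + s + r < w \leq V_t + r$, then $u < V_t - s$, giving the right inclusion.

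\medskip

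\noindent\textbf{Applying the comparison principle.} Since $u + s + r$ and $w$ are bounded $\chi_t$-psh (the shift $s+r$ does not affect $\chi_t + dd^c$), the Bedford--Taylor comparison principle yields
$$\int_{\{u + s + r < w\}} (\chi_t + \ddbar w)^n \;\leq\; \int_{\{u + s + r < w\}} (\chi_t + \ddbar u)^n \;\leq\; \int_F (\chi_t + \ddbar u)^n,$$
using the right inclusion above for the last step. By multilinearity and positivity,
$$(\chi_t + \ddbar w)^n = \bigl( r(\chi_t + \ddbar v) + (1-r)(\chi_t + \ddbar V_t) \bigr)^n \;\geq\; r^n (\chi_t + \ddbar v)^n,$$
so the left inclusion $E \subseteq \{u+s+r < w\}$ gives
$$r^n \int_E (\chi_t + \ddbar v)^n \;\leq\; \int_F (\chi_t + \ddbar u)^n.$$
Taking the supremum over all admissible $v$ yields \eqref{eqn:lemma 3.4}.

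\medskip

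\noindent\textbf{Main technical point.} The delicate step is the validity of the comparison principle for bounded $\chi_t$-psh functions on the compact K\"ahler manifold $X$, together with the well-definedness of the non-pluripolar product $(\chi_t + \ddbar v)^n$ on a general Borel set $E$. Both follow from the Bedford--Taylor theory together with the fact that for $t > 0$ the reference form $\chi_t$ lies in a K\"ahler class and $V_t$ is bounded, so no regularization of $V_t$ is needed. The convex combination trick which turns a factor of $r$ into the desired $r^n$ via multilinearity is the only nontrivial manipulation; the inclusion of sublevel sets is purely arithmetic.
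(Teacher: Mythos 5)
Your proof is correct and follows essentially the same route as the paper's: pick a test function $\phi$ (your $v$) for the capacity, form the convex combination $w = r\phi + (1-r)V_t$, use multilinearity/positivity to get $(\chi_t + dd^c w)^n \geq r^n(\chi_t + dd^c \phi)^n$, and sandwich the comparison-principle set between the two sublevel sets using $0 \leq \phi - V_t \leq 1$. The only cosmetic difference is that you state both sublevel-set inclusions up front before invoking the comparison principle, whereas the paper interleaves them in a single chain of inequalities.
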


\begin{proof} For any $\phi \in \textnormal{PSH}(X, \chi_t)$ with $0\leq \phi - V_t \leq 1$, we have
\begin{eqnarray*}
r^n \int_{\{u - V_t < -s-r\}} (\chi_t + \ddbar \phi)^n &=& \int_{\{u-V_t< -s-r\}} (r\chi_t + \ddbar (r\phi))^n\\
&\leq& \int_{\{u-V_t < -s-r\}} (\chi_t + \ddbar (r\phi) + \ddbar (1-r) V_t)^n\\
&\leq& \int_{\{u-V_t< -s-r + r(\phi - V_t)\}}( \chi_t + \ddbar (r\phi + (1-r) V_t -s-r))^n\\
&\leq& \int_{\{u < r\phi+ (1-r)V_t -s-r\}} (\chi_t + \ddbar u)^n\\
&\leq& \int_{\{u < V_t-s\}} (\chi_t + \ddbar u)^n.
\end{eqnarray*}
The third inequality follows from the comparison principle and the last inequality follows from the fact that $r\phi + (1-r)V_t -s -r = r(\phi- V_t -1) + V_t -s <  V_t -s$.

 Taking supremum of all $\phi \in \textnormal{PSH}(X,\chi_t)$ with $0\le \phi - V_t\le 1$ we get \eqref{eqn:lemma 3.4}.

\end{proof}

\begin{lemma}\label{lemma 3.3}
Let $\varphi_t$ be the solution to \eqref{eqn:3.11}. Then there exists a constant $C = C(X,\chi, \theta, p, K)>0$ such that for all $s>1$
\begin{equation*}
\frac{1}{[\chi_t^n]}Cap_{\chi_t}\xk{ \{\varphi_t -V_t < -s\}  } \le \frac{C}{(s-1)^{1/q}},
\end{equation*}
where $\frac 1 p + \frac 1 q = 1$.

\end{lemma}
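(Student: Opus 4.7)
The plan is to estimate $\mathrm{Cap}_{\chi_t}(\mathcal{K}(s))$ where $\mathcal{K}(s):=\{\varphi_t - V_t<-s\}$ by bounding the Monge--Amp\`ere mass $\int_{\mathcal{K}(s)}(\chi_t+\ddbar\varphi_t)^n$ on a slightly smaller sublevel set and then invoking Lemma \ref{lemma 3.2} with a unit ``step'' $r=1$. The outline is as follows.

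First, since $\sup_X\varphi_t=0$, the function $\varphi_t$ is a competitor in the extremal envelope and hence $V_t\ge\varphi_t$; in particular $V_t-\varphi_t\ge0$ everywhere, so $\mathcal{K}(s)=\{V_t-\varphi_t>s\}$. Because $\varphi_t\in\textnormal{PSH}(X,\chi_t)\subset\textnormal{PSH}(X,\chi+\theta)$ for all $t\in(0,1]$ and $\sup_X\varphi_t=0$, the standard compactness of normalized quasi-plurisubharmonic functions gives
\begin{equation*}
\int_X(-\varphi_t)\,\theta^n\le C=C(X,\chi,\theta),
\end{equation*}
and using $V_t\le0$ this yields $\int_X(V_t-\varphi_t)\theta^n\le C$. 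Chebyshev's inequality then produces the $\theta$-volume bound $\mathrm{vol}_\theta(\mathcal{K}(s))\le C/s$.

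Next I would control the Monge--Amp\`ere mass on $\mathcal{K}(s)$ using the equation and H\"older. From $(\chi_t+\ddbar\varphi_t)^n=t^{n-\kappa}e^{-f+c_t}\theta^n$, the uniform boundedness of $c_t$, and the two-sided comparison $[\chi_t^n]\asymp t^{n-\kappa}$ (which holds because the numerical dimension of $\chi$ equals $\kappa$, so only the top $k\le\kappa$ intersection powers survive, as in the computation at the start of the proof of Lemma \ref{lemma 3.1}), one obtains
\begin{equation*}
\frac{1}{[\chi_t^n]}\int_{\mathcal{K}(s)}(\chi_t+\ddbar\varphi_t)^n\le C\int_{\mathcal{K}(s)}e^{-f}\theta^n\le C\Bigl(\int_X e^{-pf}\theta^n\Bigr)^{1/p}\mathrm{vol}_\theta(\mathcal{K}(s))^{1/q}\le \frac{C}{s^{1/q}}.
\end{equation*}

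Finally, apply Lemma \ref{lemma 3.2} to $u=\varphi_t$ at level $s-1$ with $r=1$. Since $s>1$, $s-1>0$ and the lemma gives
\begin{equation*}
\mathrm{Cap}_{\chi_t}(\mathcal{K}(s))\le \int_{\mathcal{K}(s-1)}(\chi_t+\ddbar\varphi_t)^n\le C\,[\chi_t^n]\,(s-1)^{-1/q},
\end{equation*}
which is exactly the claimed bound. The only step requiring genuine care is the uniformity in $t$ of the various constants: the $L^1(\theta^n)$ bound on $\varphi_t$, the boundedness of $c_t$, and the lower bound $[\chi_t^n]\ge c\,t^{n-\kappa}$ must all be checked to depend only on $(X,\chi,\theta,p,K)$ rather than on $t$, but each follows from the hypotheses already used in Lemma \ref{lemma 3.1}.
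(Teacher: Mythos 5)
Your proof is correct and follows essentially the same route as the paper: apply Lemma \ref{lemma 3.2} with $r=1$ to reduce to the Monge--Amp\`ere mass on $\{\varphi_t-V_t<-(s-1)\}$, then use the equation, the uniform boundedness of $c_t$, and H\"older against the $L^p$ bound on $e^{-f}$. The only cosmetic difference is in the H\"older step: you first get $\mathrm{vol}_\theta(\{\varphi_t-V_t<-(s-1)\})\le C/(s-1)$ by Chebyshev from the uniform $L^1(\theta^n)$ bound on $V_t-\varphi_t$ and then apply H\"older with the indicator of the sublevel set, whereas the paper inserts the weight $\bigl(\tfrac{-\varphi_t+V_t}{s-1}\bigr)^{1/q}\ge 1$ on the sublevel set and applies H\"older with that weight, which packages the Chebyshev estimate into the same inequality. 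Both give exactly $\frac{C}{(s-1)^{1/q}}$, with the constant controlled by $\int_X(-\varphi_t)\theta^n\le C(X,\chi,\theta)$ (Green's formula / compactness of normalized $\theta$-psh functions), so the two arguments are interchangeable.
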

\begin{proof}
Applying Lemma \ref{lemma 3.2} to $u = \varphi_t$ and $r= 1$, we have 
\begin{align*}
& \frac{1}{[\chi_t^n]} Cap_{\chi_t}\xk{ \{ \varphi_t - V_t < -s \}   }\\
\le  &  \frac{1}{[\chi_t^n]} \int_{\{ \varphi_t - V_t < - (s-1)  \}} (\chi_t + \ddbar \varphi_t)^n\\
=&  \frac{1}{[\chi_t^n]} \int_{\{ \varphi_t - V_t < - (s-1)  \}} t^{n-\kappa}e^{-f + c_t} \theta^n\\
\le & \frac{C}{(s-1)^{1/q}}  \int_{\{ \varphi_t - V_t < - (s-1)  \}} ( - \varphi_t + V_t)^{1/q} e^{-f}\theta^n\\
\le & \frac{C}{(s-1)^{1/q}}\bk{  \int_{\{ \varphi_t - V_t < - (s-1)  \}}  e^{-p f }\theta^n}^{1/p} \bk{ \int_{ \{\varphi_t - V_t < -(s-1)\}  } (-\varphi_t + V_t)  \theta^n  }^{1/q}\\
\le & \frac{C}{(s-1)^{1/q}} \bk{ \int_X (-\varphi_t) \theta^n  }^{1/q},
\end{align*}
where in the last inequality we use the assumption that $e^{-f}\in L^p(\theta^n)$, $V_t\le 0$ and $\varphi_t\le 0$. On the other hand, since $\varphi_t\in \textnormal{PSH}(X, \chi_t)\subset \textnormal{PSH}(X, C \theta)$ for some large $C>0$ and $\sup_X\varphi_t = 0$, it follows from Green's formula that 
$$\int_X( -\varphi_t) \theta^n\le C $$ for some uniform constant $C$.
The lemma follows by combining the inequalities above.

\end{proof}

The following lemma is well-known and its proof can be found e.g. in \cite{K, EGZ1}. 
\begin{lemma}\label{lemma 3.4} Let $F:[0,\infty)\to [0,\infty)$ be a non-increasing right-continuous function satisfying
$\lim_{s\rightarrow \infty} F(s) =0$. 
If there exist $\alpha, A>0$ such that for all $s>0$ and $0\le r\le 1$, 
\begin{equation*}\label{eqn:assumption1} rF(s+r) \leq A \left( F(s)\right)^{1+\alpha},\end{equation*}
then there exists $S = S(s_0, \alpha, A)$ such that $$F(s) = 0$$ for all $s\ge S$, where $s_0$ is the smallest $s$ satisfying $\left( F(s)\right)^\alpha \leq (2A)^{-1}$.
\end{lemma}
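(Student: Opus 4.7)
The plan is to run a standard De~Giorgi style iteration, since the hypothesis $r F(s+r) \le A F(s)^{1+\alpha}$ is precisely the pay-off inequality that forces geometric decay along a suitably chosen increasing sequence. Starting from the given $s_0$ with $F(s_0)^\alpha \le (2A)^{-1}$, I will construct $s_{k+1} = s_k + r_k$ with $r_k \in (0,1]$ chosen so that $F(s_k)$ contracts by a fixed factor strictly less than $1$ while $\sum r_k$ remains summable, so that $s_k$ converges to a finite limit $s_\infty$ with $F(s_\infty)=0$.

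The natural choice is $r_k := 2A\, F(s_k)^\alpha$. The assumption on $s_0$ gives $r_0 \le 1$, and the induction below keeps $r_k \le 1$ for all $k$. Feeding $s=s_k$ and $r=r_k$ into the hypothesis yields
$$F(s_{k+1}) \;\le\; \frac{A\, F(s_k)^{1+\alpha}}{r_k} \;=\; \tfrac{1}{2} F(s_k),$$
so by induction $F(s_k) \le 2^{-k} F(s_0)$ and $r_k \le 2A F(s_0)^\alpha \cdot 2^{-k\alpha} \le 2^{-k\alpha}$. Consequently $s_\infty := \lim_{k\to\infty} s_k$ exists and is bounded above by $s_0 + (1-2^{-\alpha})^{-1}$, a quantity depending only on $s_0$, $\alpha$ and $A$.

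Finally, since $F$ is non-increasing, for every $s \ge s_k$ we have $F(s) \le F(s_k) \le 2^{-k} F(s_0)$; letting $k \to \infty$ forces $F(s) = 0$ for every $s > s_\infty$, and right-continuity of $F$ then yields $F(s_\infty) = 0$ as well. Setting $S := s_0 + (1-2^{-\alpha})^{-1}$ completes the proof. There is no serious obstacle in this argument; the only delicate point is to pick the step size $r_k$ so that the ratio $A\, F(s_k)^\alpha / r_k$ is a fixed constant strictly below $1$, which simultaneously produces geometric decay of $F(s_k)$ and summability of the $r_k$. The choice $r_k = 2A\, F(s_k)^\alpha$ is essentially the unique scaling that achieves both, and the hypothesis $F(s_0)^\alpha \le (2A)^{-1}$ is exactly what is needed to start this iteration with $r_0 \le 1$.
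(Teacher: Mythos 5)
Your argument is correct and is precisely the standard De~Giorgi iteration: the choice $r_k = 2A\,F(s_k)^\alpha$ gives $F(s_{k+1})\le \tfrac12 F(s_k)$, the starting condition $F(s_0)^\alpha \le (2A)^{-1}$ guarantees $r_0\le 1$ and hence $r_k\le 2^{-k\alpha}$, so $\sum r_k$ converges and $F$ vanishes beyond $s_0 + (1-2^{-\alpha})^{-1}$. The paper does not actually prove this lemma — it simply cites \cite{K, EGZ1}, where the same iteration appears — so your proof matches the intended one; the only point you might add for completeness is that if $F(s_k)=0$ at some finite stage then $r_k=0$ and the iteration stops, but in that case $F\equiv 0$ on $[s_k,\infty)$ by monotonicity, so the conclusion is immediate.
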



\begin{proof}[Proof of Proposition \ref{main2}.]

Define for each fixed $t\in (0,1]$ $$F(s) = \bk{ \frac{Cap_{\chi_t} \xk { \{\varphi_t - V_t < -s   \}    }}{[\chi_t]^n}   }^{1/n}.$$
By Corollary \ref{co 3.1} and Lemma \ref{lemma 3.2} applied to the function $\varphi_t$, we have
$$r F(s+r)\le A F(s)^2,\quad \textnormal{ for ~ all} ~ r \in [0,1], \, s>0,$$
for some uniform constant $A>0$ independent of $t\in (0,1]$. 

Lemma \ref{lemma 3.3} implies that $\lim_{s\to \infty} F(s) = 0$ and the $s_0$ in Lemma \ref{lemma 3.4} can be taken as less than $(2 A C)^q$, which is a uniform constant. It follows from Lemma \ref{lemma 3.4} that $F(s) = 0$ for all $s>S$, where $S \le 2 + s_0$. On the other hand, if $Cap_{\chi_t}\xk{ \{\varphi_t - V_t < -s\}  } = 0$, by Lemma \ref{lemma 3.1} and the equation \eqref{eqn:3.11}, we have
$$\int_{\{ \varphi_t - V_t <-s  \}} e^{-f } \theta^n = 0,$$ 
hence the set $\{\varphi_t - V_t <-s\} = \emptyset$. Thus $\inf_X (\varphi_t - V_t) \ge - S$. Thus we finish the proof of Proposition \ref{main2}.

\end{proof}

Therefore we have proved Proposition \ref{main2} when $\lambda =0$.  We finish this section by proving the case when $\lambda=1$. We consider the following complex Monge-Amp\`ere equations for $t\in (0, 1]$, 
\begin{equation}
(\chi + t\theta + \ddbar \varphi_t)^n = t^{n-\kappa} e^{\varphi_t -f+ c_t} \theta^n. 
\end{equation}
where $f\in C^\infty(X)$ and $c_t$ is the normalizing constant satisfying $t^{n-\kappa} \int_X e^{-f+c_t} \theta^n = \int_X (\chi + t \theta)^n$.  

\begin{corr}\label{corr 3.2} 
If 
$$\|e^{-f}\|_{L^p(X, \theta^n)} \leq    K, $$
for $p>1$ and $ K>0$, 
Then there exists $C=C(X, \chi, \theta, p,  K)>0$ such that 

$$\| \varphi_t -V_t \|_{L^\infty} \leq C.$$

\end{corr}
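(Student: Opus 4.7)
The plan is to reduce Corollary \ref{corr 3.2} to Proposition \ref{main2}: once a uniform two-sided bound on $\sup_X \varphi_t$ is established, the $\lambda=1$ equation can be regarded as a $\lambda=0$ equation with a uniformly $L^p$-bounded right-hand side, so Proposition \ref{main2} applies directly. As a first step I would integrate the equation $(\chi_t+\ddbar\varphi_t)^n = t^{n-\kappa} e^{\varphi_t - f + c_t}\theta^n$ over $X$ and use $\int_X (\chi_t+\ddbar\varphi_t)^n = [\chi_t^n]$ together with the normalization of $c_t$ to obtain
\begin{equation*}
\int_X e^{\varphi_t - f}\theta^n = \int_X e^{-f}\theta^n =: V_0.
\end{equation*}
Setting $d\mu = V_0^{-1} e^{-f}\theta^n$, a probability measure, this reads $\int_X e^{\varphi_t}\, d\mu = 1$.

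Two consequences then yield two-sided control of $\sup_X\varphi_t$. From $e^{\sup_X \varphi_t}\ge \int_X e^{\varphi_t}\,d\mu = 1$ I get $\sup_X \varphi_t \ge 0$. For the upper bound, Jensen's inequality applied to $\mu$ gives $\int_X \varphi_t\,d\mu \le 0$. On the other hand $\chi_t \le \chi+\theta$ for $t\in(0,1]$, so $\varphi_t - \sup_X \varphi_t$ lies in $\psh(X, \chi+\theta)$ with supremum $0$, a compact family in $L^1(\theta^n)$. H\"older's inequality (with $1/p+1/q=1$) then yields
\begin{equation*}
\sup_X \varphi_t - \int_X \varphi_t\, d\mu = V_0^{-1}\int_X(\sup_X\varphi_t - \varphi_t)e^{-f}\theta^n \le V_0^{-1}\|\sup_X\varphi_t - \varphi_t\|_{L^q(\theta^n)}\|e^{-f}\|_{L^p(\theta^n)} \le C,
\end{equation*}
and combining these gives $0 \le \sup_X \varphi_t \le C$ uniformly in $t$.

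With $\sup_X\varphi_t$ bounded I would then view $\varphi_t$ as the solution of the $\lambda=0$ equation $(\chi_t+\ddbar\varphi_t)^n = t^{n-\kappa}e^{c_t}\widetilde\Omega_t$ for the fixed volume form $\widetilde\Omega_t := e^{\varphi_t - f}\theta^n$; the normalization $t^{n-\kappa}e^{c_t}\int_X \widetilde\Omega_t = [\chi_t^n]$ is automatic, and
\begin{equation*}
\int_X \Bigl(\frac{\widetilde\Omega_t}{\theta^n}\Bigr)^p \theta^n = \int_X e^{p(\varphi_t - f)}\theta^n \le e^{p\sup_X\varphi_t}\int_X e^{-pf}\theta^n \le e^{pC} K
\end{equation*}
is uniform in $t\in(0,1]$. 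Proposition \ref{main2} then yields $\|(\varphi_t - \sup_X\varphi_t) - V_t\|_{L^\infty}\le C'$, which combined with $|\sup_X \varphi_t|\le C$ gives the desired $\|\varphi_t - V_t\|_{L^\infty}\le C+C'$. The main obstacle is the upper bound for $\sup_X\varphi_t$: a direct maximum-principle comparison with the $\lambda=0$ solution $\psi_t$ fails, since the extremal function $V_t$ need not be uniformly bounded below as $t\to 0$, so one must instead exploit the global integration identity and the H\"older estimate above, which tacitly rests on a uniform lower bound for $V_0$ built into the uniform boundedness of $c_t$ assumed throughout the section.
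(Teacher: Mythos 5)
Your proof is correct, but it takes a genuinely different route from the paper's, and it is worth recording the contrast. The paper \emph{does} use a maximum-principle comparison with an auxiliary $\lambda=0$ solution, but with a crucial twist you did not consider: using the $C^{1,\alpha}$ regularity of $V_t$ (Berman--Demailly), one picks $W_t\in C^\infty(X)$ with $\sup_X|V_t-W_t|\le 1$, solves $(\chi_t+\ddbar\psi_t)^n=t^{n-\kappa}e^{-f+c_t+W_t}\theta^n$ with $\sup_X\psi_t=0$, and applies the maximum principle to $u_t=\varphi_t-\psi_t$. Since the ratio of the two Monge--Amp\`ere measures is $e^{u_t+\psi_t-W_t}$, at an extremum of $u_t$ one gets $|u_t|\le\sup_X|W_t-\psi_t|\le\|\psi_t-V_t\|_{L^\infty}+1$, and Proposition~\ref{main2} applied to $\psi_t$ (whose density $e^{W_t-f}$ is in $L^p$ uniformly, as $W_t\le V_t+1\le 1$) closes the argument. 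So your observation that the ``naive'' comparison fails because $V_t$ may tend to $-\infty$ is accurate, but the $e^{W_t}$-twist circumvents it: the resulting bound is in terms of $\psi_t-V_t$, not $V_t$ itself. Your alternative --- bounding $\sup_X\varphi_t$ from the integration identity $\int_X e^{\varphi_t-f}\theta^n=\int_X e^{-f}\theta^n$ via Jensen and H\"older, then reading the $\lambda=1$ equation as a $\lambda=0$ equation with the frozen density $e^{\varphi_t-f}$, whose $L^p$-norm is uniformly controlled --- is also valid and avoids both the auxiliary solution and the regularity of $V_t$, which is a clean simplification. Two small points: in the H\"older step you cite $L^1$-compactness of $\{\varphi_t-\sup_X\varphi_t\}$ but actually need uniform $L^q$-bounds for the dual exponent $q$; this is true (the family is uniformly bounded in $L^q(\theta^n)$ for every finite $q$, indeed uniformly exponentially integrable by the global H\"ormander estimate already used in Lemma~\ref{lemma 3.1}) but should be stated as such. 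And your closing caveat about $V_0^{-1}$ is apt: both arguments implicitly assume $\int_X e^{-f}\theta^n$ is bounded below, a normalization already built into the paper's assertion that $c_t$ is uniformly bounded, so this is not a defect specific to your approach.
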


\begin{proof} Since for each $t>0$, it is proved in \cite{BD} that $V_t $ is $C^{1,\alpha}(X, \theta)$, we can  always find  $W_t\in C^\infty(X)$ such that $\sup_X |V_t- W_t| \leq 1.$ Furthermore, $V_t$ is uniformly bounded above for all $t\in (0, 1]$. We let $\psi_t$ be the solution of 
$$(\chi_t + \ddbar \psi_t)^n = t^{n-\kappa} e^{-f+c_t +W_t } \theta^n, \quad \sup_X \psi_t =0.$$
and 
$$u_t = \varphi_t - \psi_t. $$ 
Then 
$$\frac{ (\chi_t + \ddbar \psi_t + \ddbar u_t)^n}{(\chi_t+ \ddbar \psi_t)^n}  = e^{u_t + \psi_t - W_t}. $$
Since $\sup_X |\psi_t - W_t|\leq \sup_X |\psi_t - V_t| + 1$, the maximum principle immediately implies that 
$$ \| u_t\|_{L^\infty(X)} \leq  \|\psi_t-V_t\|_{L^\infty(X)} + 1 $$
and so $$\|\varphi_t -V_t\|_{L^\infty(X)} \leq  2 \|\psi_t-V_t\|_{L^\infty(X)} + 1.$$

\end{proof}


\section{Proof of Theorem \ref{main3}}

Let $X$ be a K\"ahler manifold. $X$ is said to be a minimal model if the canonical bundle $K_X$ is nef. The numerical dimension of $K_X$ is given by 
$$\nu(K_X) = \max \{ m = 0, ..., n~|~ [K_X]^m \neq 0 \text{ in }H^{m,m}(X,\mathbb C)\}. $$
 Let $\theta$ be a smooth K\"ahler form on a minimal model $X$ of complex dimension $n$. Let $\kappa= \nu(X)$, the numerical dimension of $K_X$. Let $\Omega $ be a smooth volume form on $X$. We let $\chi$ be defined by
 $$\chi = \ddbar \log \Omega\in K_X.$$
 We consider the following Monge-Amp\`ere equation for $t\in (0, \infty)$
 \begin{equation}\label{ke 1}
 (\chi + t\theta + \ddbar \varphi_t)^n = t^{n-\kappa} e^{\varphi_t} \Omega. 
 \end{equation}
 Since $K_X$ is nef, $[\chi + t\theta]$ is a K\"ahler class for any $t>0$. By Aubin and Yau's theorem, there exists a unique smooth solution $\varphi_t$ solving \eqref{ke 1} for all $t>0$. Let $\omega_t = \chi + t\theta + \ddbar \varphi$. Then $\omega_t $ satisfies
 $$ Ric(\omega_t) = - \omega_t + t \theta.$$
In particular, any K\"ahler metric satisfying the the above twisted K\"ahler-Einstein equation must coincide with $\omega_t$. 

 \begin{lemma} There exists $C>0$ such that for all $t\in (0, 1]$, 
$$C^{-1} t^{n-\kappa} \leq  [\chi+ t\theta]^n \leq C t^{n-\kappa}.$$ 
 
 \end{lemma}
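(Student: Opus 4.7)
The strategy is a direct binomial expansion of $[\chi + t\theta]^n$ together with the numerical-dimension hypothesis on $[\chi]$ and basic positivity of intersections of nef and K\"ahler classes.

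First, expanding gives
$$[\chi + t\theta]^n = \sum_{k=0}^n \binom{n}{k} [\chi]^k \cdot [\theta]^{n-k}\, t^{n-k}.$$
By the definition of numerical dimension, $[\chi]^k = 0$ in $H^{2k}(X,\mathbb{R})$ for $k > \kappa$, so every term in which $t$ appears with exponent strictly less than $n-\kappa$ vanishes. The remaining intersection numbers $[\chi]^k \cdot [\theta]^{n-k}$ for $0 \le k \le \kappa$ are all non-negative, since $[\chi]$ is nef and $[\theta]$ is K\"ahler (so $[\chi]^k$ is a limit of positive smooth $(k,k)$-classes and $[\theta]^{n-k}$ is strictly positive). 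This is the same reduction used implicitly in the chain of equalities displayed at the start of the proof of Lemma \ref{lemma 3.1}.

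The upper bound is then immediate: for $t \in (0,1]$ and $k \le \kappa$ one has $t^{n-k} \le t^{n-\kappa}$, and the finitely many coefficients $\binom{n}{k}[\chi]^k \cdot [\theta]^{n-k}$ depend only on $(X,\chi,\theta)$. This gives $[\chi + t\theta]^n \le C\, t^{n-\kappa}$.

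For the lower bound, I would discard every non-negative term except the leading $k=\kappa$ contribution, reducing the problem to
$$[\chi + t\theta]^n \ge \binom{n}{\kappa}\, [\chi]^\kappa \cdot [\theta]^{n-\kappa}\, t^{n-\kappa}.$$
The one real obstacle is to verify \emph{strict} positivity of $[\chi]^\kappa \cdot [\theta]^{n-\kappa}$, which so far is only known to be non-negative. To address this I would invoke the standard characterization of the numerical dimension of a nef class on a compact K\"ahler manifold: for a nef class $\alpha$ and any K\"ahler class $\omega$,
$$\nu(\alpha) \;=\; \max\bigl\{k \in \{0,1,\ldots,n\} : \alpha^k \cdot \omega^{n-k} > 0 \bigr\}.$$
Applied with $\alpha = [\chi]$ and $\omega = [\theta]$, and using $\nu([\chi]) = \kappa$, this yields $[\chi]^\kappa \cdot [\theta]^{n-\kappa} > 0$, from which the lower bound $[\chi + t\theta]^n \ge C^{-1} t^{n-\kappa}$ follows immediately.
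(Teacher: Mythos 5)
Your proof is correct and follows the same route as the paper: expand $[\chi + t\theta]^n$ binomially, kill the terms with $k > \kappa$ using $[\chi]^k = 0$ in $H^{2k}$, use non-negativity of the remaining mixed nef--K\"ahler intersection numbers, and reduce both bounds to the strict positivity of the leading coefficient $[\chi]^\kappa \cdot [\theta]^{n-\kappa}$. The paper asserts this positivity directly from $[\chi]^\kappa \neq 0$ together with nefness of $[\chi]$; you invoke the equivalent characterization of numerical dimension via intersections with a K\"ahler power, which is the same underlying fact stated with a citation rather than re-derived.
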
 
 
 \begin{proof} First we note that $[\chi]^d\cdot [\theta]^{n-\kappa} >0$ because $[\chi]^d\neq 0$ and $[\chi]$ is nef. Then
 $$[\chi+ t\theta]^n = t^{n-\kappa}\binom{n}{d} [\chi]^d\cdot [\theta]^{n-\kappa} + t^{n-\kappa+1} \Big( \sum_{j=d+1}^n \binom{n}{j} t^{j - d-1} [\chi]^j \cdot [\theta]^{n-j} \Big).$$

 \end{proof}

 \begin{lemma}\label{lemma 4.2} Let $V_t =  \sup\{ u ~|~ u\in \textnormal{PSH}(X, \chi + t\theta), ~  u \le 0\}  $. Then there exists $C>0$ such that for all $t\in (0, 1]$, 
 \begin{equation}
 \|\varphi_t - V_t\|_{L^\infty(X)} \leq C. 
 \end{equation}
 
 \end{lemma}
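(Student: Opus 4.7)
The plan is to deduce Lemma 4.2 directly from the $\lambda=1$ version of Proposition 2.2 (specifically Corollary 3.2), after absorbing the explicit normalizing constant $c_t$ appearing there into the solution. Since $\Omega$ is a smooth positive volume form, writing $\Omega = e^{-f}\theta^n$ gives $f\in C^\infty(X)$, so the integrability hypothesis $e^{-f}\in L^p(X,\theta^n)$ holds for every $p>1$ with a uniform bound depending only on $\Omega$ and $\theta$.

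First I would introduce the auxiliary equation from the Proposition 2.2 framework, namely
\begin{equation*}
(\chi + t\theta + \ddbar \psi_t)^n = t^{n-\kappa} e^{\psi_t + c_t}\,\Omega,\qquad \sup_X\text{ unnormalized},
\end{equation*}
where $c_t$ is chosen so that $t^{n-\kappa} e^{c_t}\int_X \Omega = \int_X (\chi+t\theta)^n$. By Lemma 4.1 the right-hand side is comparable to $t^{n-\kappa}$ uniformly for $t\in(0,1]$, and $\int_X\Omega$ is a fixed positive number, so $c_t$ is uniformly bounded. Next, observe that if one sets $\psi_t := \varphi_t - c_t$, a direct substitution into \eqref{ke 1} shows that $\psi_t$ solves the auxiliary equation (the constant shift in the exponent is exactly compensated by the shift in the reference of $\psi_t$, while $\ddbar$ is unchanged by a constant). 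By the uniqueness of the solution to the twisted Kähler--Einstein equation in the $\lambda=1$ case, this identification is the one and only one.

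Now Corollary 3.2 applies to $\psi_t$ with $f = -\log(\Omega/\theta^n)\in C^\infty(X)$, yielding a constant $C=C(X,\chi,\theta,\Omega)$ such that
\begin{equation*}
\|\psi_t - V_t\|_{L^\infty(X)} \le C\qquad\text{for all }t\in(0,1].
\end{equation*}
Combining with the uniform bound on $|c_t|$ and the identity $\varphi_t - V_t = (\psi_t - V_t) + c_t$ gives the desired estimate $\|\varphi_t - V_t\|_{L^\infty(X)}\le C$.

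There is essentially no deep obstacle here, since all analytic work has been carried out in Section 3; the only points that require care are (i) verifying that the normalizing constant $c_t$ stays bounded as $t\to 0$, which is where Lemma 4.1 (degeneration of $[\chi+t\theta]^n$ of order $t^{n-\kappa}$) is used, and (ii) checking the sign and direction of the constant shift relating $\varphi_t$ to $\psi_t$, so that the $L^\infty$-estimate transfers cleanly. Both are elementary bookkeeping steps.
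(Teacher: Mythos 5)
Your proof is correct and follows essentially the same route as the paper: the paper simply says "apply Proposition 2.2 to equation \eqref{ke 1}", and you have fleshed out the only point that requires any care, namely that equation \eqref{ke 1} lacks the normalizing constant $c_t$ appearing in Proposition \ref{main2}/Corollary \ref{corr 3.2}, which you absorb by the constant shift $\psi_t = \varphi_t - c_t$ and then control $c_t$ via the volume asymptotics in Lemma 4.1. All the checks (smoothness of $\Omega$ giving $e^{-f}\in L^p$ for every $p$, uniform boundedness of $c_t$, and the triangle inequality at the end) are the right and elementary ones; the remark about uniqueness is superfluous but harmless.
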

 
 \begin{proof} The lemma immediately follows by applying Proposition \ref{main2} to equation \eqref{ke 1}.

 \end{proof}

We now prove the main result in this section. 

\begin{lemma} \label{diam43} There exists $C>0$ such that for all $t\in (0, 1]$, 
$$Diam(X, g_t) \leq C. $$

\end{lemma}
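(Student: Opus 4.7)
My plan is to adapt the proof of Lemma \ref{lemma 2.4} to the collapsing twisted K\"ahler-Einstein setting, with Proposition \ref{main2} (via Corollary \ref{corr 3.2}) playing the role that Kolodziej's $L^\infty$-estimate played there. I argue by contradiction, supposing that $D := \mathrm{Diam}(X, g_t)$ is arbitrarily large for some $t \in (0,1]$. Along a unit-speed minimal $g_t$-geodesic $\gamma:[0,D]\to X$, sampling $x_i = \gamma(6i)$ gives disjoint balls $B_{g_t}(x_i, 3)$, so by pigeonhole on $\int_X \theta^n = [\theta]^n$ some $x_{i_0}$ satisfies $\mathrm{Vol}_\theta(B_{g_t}(x_{i_0}, 3)) \leq C/D$.

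Next, following Section 2, I build a radial cutoff $\eta = \rho(d_{g_t}(\cdot, x_{i_0}))$ supported in $B_{g_t}(x_{i_0}, 2)$ with $\eta \equiv 1$ on $B_{g_t}(x_{i_0}, 1)$, and a smooth density $F \geq 1$ with $F \equiv D^{\epsilon/(p(p-\epsilon))}$ on $B_{g_t}(x_{i_0}, 2)$ and $F \equiv 1$ outside $B_{g_t}(x_{i_0}, 3)$, for a small $\epsilon \in (0, p-1)$. Since $\Omega$ is smooth, $\Omega/\theta^n$ lies in every $L^p(X, \theta^n)$ uniformly; combining this with H\"older's inequality and the volume pigeonhole yields a uniform $L^{p-\epsilon}$-bound on $F\Omega/\theta^n$ independent of $D$ and $t$.

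I then consider the auxiliary twisted K\"ahler-Einstein equation
\[
(\chi + t\theta + \ddbar \phi_t)^n = t^{n-\kappa} e^{\phi_t} F \Omega,
\]
which is solvable by Aubin-Yau. Corollary \ref{corr 3.2} gives $\|\phi_t - V_t\|_{L^\infty(X)} \leq C$ uniformly in $D$ and $t$, and combined with Lemma \ref{lemma 4.2} this yields $\|\phi_t - \varphi_t\|_{L^\infty(X)} \leq C$. Setting $\hat\omega_t = \chi + t\theta + \ddbar \phi_t$, the twisted K\"ahler-Einstein identities $\mathrm{Ric}(\omega_t) = -\omega_t + t\theta$ and $\mathrm{Ric}(\hat\omega_t) = -\hat\omega_t + t\theta$ (the latter valid on $B_{g_t}(x_{i_0}, 2)$ where $F$ is constant) give
\[
\Delta_{g_t} \log(\hat\omega_t^n/\omega_t^n) = \tr_{g_t} \hat\omega_t - n \geq n(\hat\omega_t^n/\omega_t^n)^{1/n} - n
\]
on this ball. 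Applying the verbatim maximum-principle argument of Section 2 to
\[
H = \eta\bigl(\log(\hat\omega_t^n/\omega_t^n) - \bigl((\varphi_t - \sup_X \varphi_t) - (\phi_t - \sup_X \phi_t)\bigr)\bigr)
\]
then yields $\sup_X H \leq C$, so on $B_{g_t}(x_{i_0}, 1)$ where $\eta \equiv 1$ we obtain $\hat\omega_t^n/\omega_t^n = F e^{\phi_t - \varphi_t} \leq C$, which combined with $\|\phi_t - \varphi_t\|_{L^\infty} \leq C$ forces $D^{\epsilon/(p(p-\epsilon))} \leq C'$, contradicting $D \to \infty$.

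The main obstacle is ensuring that every constant is uniform in the collapsing parameter $t \in (0, 1]$. The decisive input is Proposition \ref{main2}: the $L^\infty$-oscillation of both the original potential $\varphi_t$ and the auxiliary potential $\phi_t$ is controlled modulo the common extremal function $V_t$, uniformly in $t$, so their difference $\varphi_t - \phi_t$ is uniformly bounded even though each potential itself collapses. The second uniform-in-$t$ ingredient is the Ricci lower bound $\mathrm{Ric}(g_t) = -g_t + t\theta \geq -g_t$, which gives a Laplacian comparison for $\eta$ on the annulus $\{1 \leq d_{g_t}(\cdot, x_{i_0}) \leq 2\}$ independent of $t$. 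With these two uniform inputs in hand, the Section 2 maximum-principle machinery transfers directly to the collapsing setting.
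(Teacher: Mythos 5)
Your proposal is correct and takes essentially the same approach as the paper: a pigeonhole argument on ball volumes, an auxiliary twisted K\"ahler--Einstein equation with a bumped density $F$, the uniform $L^\infty$-estimate modulo $V_t$ from Corollary \ref{corr 3.2}, and a localized maximum principle for a cutoff times $\log(\hat\omega_t^n/\omega_t^n)$. The only (harmless) differences are that you carry over the $\epsilon$-dependent exponent for $F$ and the potential-difference correction in $H$ from Section~2, both of which the paper drops here since $\Omega/\theta^n$ is uniformly bounded (so $F_t = D^{1/2}$ suffices) and $\lambda = 1$ (so $\Delta_{g_t}\log(\hat\omega_t^n/\omega_t^n) = \tr_{g_t}\hat\omega_t - n$ already has the right sign without the potential term).
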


\begin{proof}  The proof  applies similar argument in the proof of Theorem \ref{main1}. Suppose $Diam(X, g_t) = D$ for some $D\geq 6$. Let $\gamma:[0,D]\to X$ be a smoothing minimizing geodesic with respect to the metric $g_t$ and choose the points $\{x_i = \gamma(6i)\}_{i=0}^{[D/6]}$. It is clear that the balls $\{B_{g_t}(x_i,3)\}$ are disjoint so
$$\sum_{i=0}^{[D/6]} \vol_{\Omega}\xk{B_{g_t}(x_i,3)   }\le \int_X \Omega = V,$$
where $\vol_{\Omega}\xk{B_{g_t}(x_i, 3) }= \int_{B_{g_t}}(x_i, 3) \Omega. $
Hence there exists a geodesic ball $B_{g_t}(x_i, 3)$ such that 
$$\vol_{\Omega}\big(B_{g_t}(x_i, 3)\big) \leq 6V D^{-1}.$$ 
We fix such $x_i$ and  construct a cut-off function $\eta(x) = \rho(r(x))  \geq 0$ with $r(x) = d_{g_t}(x,x_i)$ such that 
$$\eta=1~ \textnormal{on}~ B_{g_t}(x_i, 1), \quad \eta=0~ \textnormal{outside}~ B_{g_t}(x_i, 2)$$
and
$$ \rho\in [0,1] ,\quad \rho^{-1} (\rho')^2\le C, \quad  |\rho''| \leq C. $$
Define a function $F_t>0$ on $X$ such that 
$$F_t=1~\textnormal{ outside}~B_{g_t} (x_i, 3), \quad F_t= D^{1/2} ~\textnormal{ on }~B_{g_t}(x_i, 2)$$
 and 
$$C^{-1} \leq \int_X F_t \Omega \leq C, \quad \int_X F_t^2 \Omega\leq C. $$

We now consider the equation
$$(\chi + t\theta+  \psi_t)^n = t^{n-\kappa}e^{ \psi_t} F_t\; \Omega,\quad \textnormal{for~ all~} t\in (0,1]. $$
Applying Corollary \ref{corr 3.2}, there exists a uniform constant $C>0$ such that for all $t\in (0,1]$, 
$$\|\psi_t- V_t\|_{L^\infty(X)} \leq C,$$
and so by Lemma \ref{lemma 4.2}
\begin{equation}\label{eqn:lemma 4.2}\|\varphi_t - \psi_t \|_{L^\infty(X)} \leq C. \end{equation}

Let $\hat g_t= \chi + t \theta_t + \ddbar \psi_t$. Then on $B_{g_t}(x_i, 2)$, 
$$Ric(\hat g_t) = - \hat g_t + t \theta, \quad Ric(g_t) = -  g_t + t\theta,  $$
and so 
$$ \Delta_{g_t} \log \frac{\hat\omega_t^n} {\omega_t^n} =- n + \tr_{g_t}(\hat g_t)\ge -n + n \bk{\frac{\hat \omega_t^n}{\omega_t^n}}^{1/n} . $$

Let $H = \eta \log \frac{\hat \omega_t^n} {\omega_t^n} $. We may suppose $\sup_X H =H(z_{max}) >0$, otherwise we are done. $z_{max}$ must lies in the support of $\eta$, and at $z_{max}$ we have 
\begin{align*}
0\ge \Delta_{g_t} H &\geq  \frac 1 \eta \bk{ H \Delta_{g_t} \eta + 2 \innpro{\nabla \eta,\nabla H} - 2 \frac{H}{\eta} \abs{\nabla \eta} - n\eta^2 + n\eta^2 e^{\frac{H}{n \eta}}     }\\
& \ge \frac 1 \eta \bk{ \frac 1{2n} H^2 - C H  }
\end{align*} 
for some uniform constant $C>0$ for all $t\in (0,1]$. Maximum principle implies that $\sup_X H \le C(n)$, in particular on $B_{g_t}(x_i,1)$ where $\eta \equiv 1$, there exists $C>0$ such that for all $t\in (0, 1]$, 
 $$\frac{\hat \omega_t^n}{\omega_t^n} = D^{1/2} e^{\psi_t - \varphi_t}\le C.$$ By the uniform $L^\infty$-estimate\eqref{eqn:lemma 4.2},  there exists  $C = C(n, \chi,\Omega,\theta)$ such that $D \le C$.

\end{proof}

Now we can complete the proof of Theorem \ref{main3}. Gromov's pre-compactness theorem and the diameter bound in Lemma \ref{diam43} immediately imply that after passing to a subsequence, $(X, g_{t_j})$ converges to a compact metric  space. Since $\varphi_t - V_t$ is uniformly bounded and $V_t $ is uniformly bounded below by $V_0$, $\varphi_{t_j}$ always converges weakly to some $\varphi_\infty \in \textnormal{PSH}(X, \chi)$, after passing to a subsequence. In particular, there exists $C>0$ such that  
$$ ||\varphi_\infty - V_0||_{L^\infty(X)} \leq C,$$
where $V_0$ is the extremal function on $X$ with respect to $\chi$. \qed


\section{Proof of Theorem \ref{main4}}
Our proof is based on the arguments of \cite{ST1,To,TWY}. 

We fix some notations first. Recall $X_{can}$ has dimension $\kappa$ and $\chi$ is the restriction of Fubini-Study metric on $X_{can}$ from the embedding $X_{can}\hookrightarrow \mathbb{CP}^{N_m}$, where $N_m + 1 = \mathrm{dim} H^0(X,mK_X)$. Hence $\Phi^*\chi$ is a smooth nonnegative $(1,1)$-form on $X$, and in the following we identify $\chi$ with $\Phi^*\chi$ for simplicity. Let $\theta$ be a fixed K\"ahler metric on $X$.

Define a function $H\in C^\infty(X)$ as 
$$\chi^\kappa \wedge \theta^{n-\kappa} = H \theta^n$$
which is the modulus square of the Jacobian of the map $\Phi: (X,\theta)\to( X_{can},\chi)$ and vanishes on $S$, the indeterminacy set of $\Phi$, hence $H^{-\gamma}\in L^1(X,\theta^n)$ for some small $\gamma>0$. We fix a smooth nonnegative function $\sigma $ on $X_{can}$ as defined in \cite{To}, which satisfies 
\begin{equation}\label{eqn:sigma}
0\le \sigma\le 1,\quad 0\le \sqrt{-1}\partial \sigma\wedge \bar \partial \sigma\le C \chi, \quad -C\chi \le \ddbar \sigma \le C\chi,
\end{equation} 
for some dimensional constant $C=C(\kappa)>0$. From the construction, $\sigma$ vanishes exactly on $S' = \Phi(S)$. There exist $\lambda>0$, $C>1$ such that for any $y\in X_{can}^\circ = X_{can}\backslash S'$ (see \cite{To})
$$\sigma(y)^\lambda\le C \inf_{X_y} H,\quad \text{here }X_y = \Phi^{-1}(y). $$

The twisted K\"ahler-Einstein metric $g_t $ in \eqref{eqn:tKE} satisfies the following complex Monge-Amp\`ere equation (with $\theta = \theta$)
\begin{equation}\label{eqn:tKE1}
(\chi + t \theta + \ddbar \varphi_t)^n = t^{n-\kappa} e^{\varphi_t}\Omega,\quad \textnormal{for~ all~} t\in (0,1].
\end{equation} In case $K_X$ is semi-ample, $V_t = 0$ hence Corollary \ref{corr 3.2} implies: (see also \cite{DP, K, EGZ})

\begin{lemma}
There is a uniform constant $C>0$ such that 
$\| \varphi_t\|_{L^\infty(X)}\le C.$
\end{lemma}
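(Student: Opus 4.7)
The plan is to derive this $L^\infty$ bound as a direct application of Corollary \ref{corr 3.2} once we verify that the extremal function $V_t$ associated to $\chi + t\theta$ vanishes identically.

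First I would check that the reference form $\chi + t\theta$ satisfies $V_t \equiv 0$. Since $K_X$ is semi-ample, the canonical map $\Phi: X \to X_{can}$ exists and $\chi$ is (identified with) the pullback $\Phi^*\chi_{FS}$ of the Fubini-Study metric on $X_{can} \hookrightarrow \mathbb{CP}^{N_m}$. In particular $\chi$ is a smooth \emph{nonnegative} $(1,1)$-form on all of $X$, so $\chi + t\theta \geq 0$ (in fact K\"ahler) for each $t \in (0,1]$. This means $u \equiv 0$ belongs to the admissible class $\{u : \chi + t\theta + \ddbar u \geq 0,\ u \leq 0\}$, so $V_t \geq 0$ pointwise. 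On the other hand every $u$ in the class satisfies $u \leq 0$, so $V_t \leq 0$ pointwise. Therefore $V_t \equiv 0$.

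Next I would invoke Corollary \ref{corr 3.2}. The volume form $\Omega$ in equation \eqref{eqn:tKE1} is smooth, hence $\Omega / \theta^n$ is uniformly bounded on $X$, and trivially belongs to $L^p(X, \theta^n)$ for every $p > 1$ with a bound depending only on $X$, $\theta$, and $\Omega$. The hypotheses of Corollary \ref{corr 3.2} are thus satisfied uniformly in $t \in (0,1]$, yielding a constant $C = C(X, \chi, \theta, \Omega) > 0$ such that
\begin{equation*}
\| \varphi_t - V_t \|_{L^\infty(X)} \leq C \quad \text{for all } t \in (0,1].
\end{equation*}
Combining with $V_t \equiv 0$ finishes the proof, giving $\| \varphi_t \|_{L^\infty(X)} \leq C$.

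There is really no obstacle here; the only conceptual point is identifying that semi-ampleness of $K_X$ forces $\chi$ to be semi-positive and hence kills the extremal function. All the analytic content (capacity estimates, H\"ormander-type exponential integrability of quasi-plurisubharmonic functions, the iterative De Giorgi-type scheme in Lemma \ref{lemma 3.4}) has already been absorbed into Proposition \ref{main2} and its Corollary \ref{corr 3.2}, so this lemma is purely a consequence of that machinery plus the vanishing of $V_t$.
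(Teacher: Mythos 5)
Your argument is correct and is exactly the proof the paper has in mind: the paper notes in one line that semi-ampleness of $K_X$ forces $V_t\equiv 0$ and then cites Corollary \ref{corr 3.2}. You have simply spelled out both steps (why $\chi$ being semi-positive kills the extremal function, and why a smooth $\Omega$ trivially satisfies the $L^p$ hypothesis uniformly in $t$), which matches the paper's reasoning.
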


We have the following Schwarz lemma whose proof is similar to that of Lemma \ref{lemma 2.2}, so we omit it.
\begin{lemma}\label{lemma:Sch}
There exists a constant $C>0$ such that
$$\tr_{\omega_t}\chi\le C,\quad \textnormal{for~ all~}~ t\in (0,1].$$
\end{lemma}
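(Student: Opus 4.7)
The plan is to mimic the proof of Lemma \ref{lemma 2.2} via a Chern-Lu type maximum principle argument. Since $\chi$ is the pullback of the Fubini-Study metric on $X_{can}$, it is smooth and its bisectional curvature is bounded above by a uniform constant depending only on $X_{can}$. Combined with the uniform Ricci lower bound $Ric(\omega_t) = -\omega_t + t\theta \ge -\omega_t$, the standard Chern-Lu computation gives, on the open set $\{\tr_{\omega_t}\chi > 0\}$,
\begin{equation*}
\Delta_{\omega_t} \log \tr_{\omega_t}\chi \ge -C_1\tr_{\omega_t}\chi - C_1,
\end{equation*}
where $C_1>0$ is independent of $t\in (0,1]$.

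Next, I would consider the auxiliary function $H = \log \tr_{\omega_t}\chi - B\varphi_t$ with $B > 2C_1$. Combining the estimate above with the identity $\Delta_{\omega_t}\varphi_t = n - \tr_{\omega_t}(\chi + t\theta) \le n - \tr_{\omega_t}\chi$, one finds
\begin{equation*}
\Delta_{\omega_t} H \ge (B-C_1)\tr_{\omega_t}\chi - Bn - C_1.
\end{equation*}
Since $H = -\infty$ on the zero locus of $\chi$, any maximum of $H$ is attained in $\{\tr_{\omega_t}\chi > 0\}$, and the maximum principle there yields a pointwise bound $\tr_{\omega_t}\chi \le C_2$. Unwinding the definition of $H$, together with the uniform $L^\infty$ bound $\|\varphi_t\|_{L^\infty} \le C$ from the preceding lemma, then gives the desired global estimate $\tr_{\omega_t}\chi \le C$.

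The main subtlety is the degeneracy of $\chi$ along $\mathbf{S}$: strictly speaking, $\log \tr_{\omega_t}\chi$ fails to be smooth where $\chi$ vanishes, so one should not apply the maximum principle directly. A clean way around this is to work with $H_\epsilon = \log(\epsilon + \tr_{\omega_t}\chi) - B\varphi_t$ for small $\epsilon > 0$, verify that the Chern-Lu inequality holds up to an error that can be absorbed into $C_1$, apply the maximum principle to $H_\epsilon$, and finally let $\epsilon \to 0^+$. I do not anticipate any further obstacle, as all quantitative ingredients—the bisectional curvature bound on $\chi$, the Ricci lower bound on $\omega_t$, and the $L^\infty$ bound on $\varphi_t$—are uniform in $t\in (0,1]$.
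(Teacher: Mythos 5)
Your proof is correct and takes essentially the same approach the paper intends: the paper explicitly states that Lemma~\ref{lemma:Sch} follows by the same argument as Lemma~\ref{lemma 2.2} (Chern--Lu plus maximum principle with the barrier $-B\varphi_t$), which is exactly what you carry out, using the uniform Ricci lower bound $Ric(\omega_t)\ge -\omega_t$, the bounded bisectional curvature of the Fubini--Study metric, and the uniform $L^\infty$ bound on $\varphi_t$. Your remark about the degeneracy of $\chi$ along $\mathbf{S}$ is a harmless extra precaution: since $H\to -\infty$ on the zero locus of $\chi$, the maximum is automatically attained where $\tr_{\omega_t}\chi>0$ and the argument goes through without regularization, though the $\epsilon$-regularized version is of course also valid.
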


We denote $\theta_{y} = \theta|_{X_y}$ for $y\in X^\circ_{can}$, the restriction of $\theta$ on the fiber $X_y$ which is a smooth $(n-\kappa)$-dimensional Calabi-Yau submanifold of $X$. We will omit the subscript $t$ in $\varphi_t$ and simply write $\varphi = \varphi_t$, and define
$\overline{\varphi}_y = \dashint_{X_y} \varphi \theta^{n-\kappa}_{y}$ to be
the average of $\varphi$ over the fiber $X_y$. Denote the reference metric $\hat \omega_t = \chi + t\theta$. We calculate
$$(\hat \omega_t + \ddbar \varphi)|_{X_y} =\xk{ { t \theta_{y}}+ \ddbar (\varphi - \overline{\varphi}_y) }|_{X_y} = \omega_t|_{X_y},$$
hence
\begin{equation}\label{eqn:MA1}
\xk{ \theta_{y} + t^{-1} \ddbar (\varphi - \overline{\varphi}_y)|_{X_y}   }^{n-\kappa} = t^{-n+\kappa} \omega_{t,y}^{n-\kappa}.
\end{equation}
On the other hand,
\begin{align*}
t^{-n+\kappa}\frac{\omega_{t,y}^{n-\kappa}}{\theta_{y}^{n-\kappa}}& = t^{-n+\kappa}\frac{\omega_t^{n-\kappa}\wedge \chi^\kappa}{\theta^{n-\kappa}\wedge \chi^\kappa}\Big|_{X_y}\\
&\le C \xk{\tr_{\omega_t}\chi}^\kappa \frac{\Omega}{\theta^{n-\kappa}\wedge \chi^\kappa}\Big|_{X_y}\\
&\le C H^{-1}\le C\sigma^{-\lambda}(y).
\end{align*}
Since the Sobolev constant of $(X_y, \theta_{y})$ is uniformly bounded and Poincar\'e constant of $(X_y,\theta_{y})$ is bounded by $C e^{B\sigma^{-\lambda}(y)}$ for some uniform constants $B,\, C>0$ (see \cite{To}), combined with the fact  that
$$\dashint_{X_y} (\varphi - \overline\varphi_y)\theta_{y}^{n-\kappa} = 0,$$
Moser iteration implies (\cite{Y1, To})
\begin{lemma}\label{lemma:1}There exist constants $B_1,\, C_1>0$ such that for any $y\in X^\circ_{can}$,
\begin{equation*}
\sup_{X_y} t^{-1} |\varphi - \overline\varphi_y|\le C_1 e^{B_1\sigma^{-\lambda}(y)},\quad \textnormal{for~ all~} ~ t\in (0,1].
\end{equation*}
\end{lemma}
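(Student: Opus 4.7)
First I would localize the complex Monge--Amp\`ere equation to a single fiber. Fix $y\in X_{can}^\circ$ and set
\[
u := t^{-1}\bigl(\varphi - \overline\varphi_y\bigr)\big|_{X_y}.
\]
By the identity \eqref{eqn:MA1} together with the pointwise bound
$t^{-n+\kappa}\omega_{t,y}^{n-\kappa}/\theta_y^{n-\kappa}\le C\sigma^{-\lambda}(y)$
derived in the paragraph preceding the lemma (which itself uses the Schwarz estimate $\tr_{\omega_t}\chi\le C$ of Lemma \ref{lemma:Sch}), the function $u$ satisfies the fiberwise equation
\[
(\theta_y + \ddbar u)^{n-\kappa} = F\,\theta_y^{n-\kappa}\qquad\text{on }X_y,\quad 0\le F\le C\sigma^{-\lambda}(y),
\]
with $\int_{X_y} u\,\theta_y^{n-\kappa}=0$. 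All constants that follow will be uniform in $t\in(0,1]$, so the lemma reduces to $\|u\|_{L^\infty(X_y)}\le C_1 e^{B_1\sigma^{-\lambda}(y)}$ on the compact K\"ahler manifold $(X_y,\theta_y)$ of complex dimension $n-\kappa$.

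The plan is Yau's Moser iteration \cite{Y1}. Setting $\omega_y=\theta_y+\ddbar u$ and rewriting the equation as $\omega_y^{n-\kappa}-\theta_y^{n-\kappa}=(F-1)\theta_y^{n-\kappa}$, I would multiply by $|u|^{\alpha-1}u$ for $\alpha\ge 1$, integrate by parts, and discard the non-negative mixed terms $\omega_y^k\wedge\theta_y^{n-\kappa-1-k}$ with $k\ge 1$. Using the positivity of $\sqrt{-1}\partial u\wedge\dbar u$ this yields
\[
\int_{X_y}\bigl|\nabla|u|^{(\alpha+1)/2}\bigr|^2\,\theta_y^{n-\kappa}\le C(\alpha+1)\,\|F-1\|_{L^\infty(X_y)}\int_{X_y}|u|^{\alpha}\,\theta_y^{n-\kappa}.
\]
Because the Sobolev constant of $(X_y,\theta_y)$ is uniform in $y$ by \cite{To}, a standard iteration along $\alpha_k=\chi^{k}$ (with $\chi=(n-\kappa)/(n-\kappa-1)$) yields a dimensional bound of the form
\[
\|u\|_{L^\infty(X_y)}\le C\bigl(1+\|F\|_{L^\infty(X_y)}\bigr)^{N}\bigl(1+\|u\|_{L^1(X_y)}\bigr)
\]
for some dimensional constants $C,N$.

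It then remains to control $\|u\|_{L^1(X_y)}$. Specializing the gradient estimate above to $\alpha=1$ gives $\|\nabla u\|_{L^2(X_y)}^2\le C\|F\|_{L^\infty}\|u\|_{L^1(X_y)}$. Since $\int_{X_y} u\,\theta_y^{n-\kappa}=0$, the Poincar\'e inequality on $(X_y,\theta_y)$ with constant $Ce^{B\sigma^{-\lambda}(y)}$ from \cite{To}, together with $\|u\|_{L^1}\le C\|u\|_{L^2}$, gives
\[
\|u\|_{L^1(X_y)}\le Ce^{2B\sigma^{-\lambda}(y)}\|F\|_{L^\infty}\le C\, e^{B'\sigma^{-\lambda}(y)}.
\]
Plugging this into the Moser bound and absorbing the polynomial factor $(1+\sigma^{-\lambda})^{N}$ into an exponential produces the claimed estimate.

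The main obstacle, in my view, is purely bookkeeping: making sure that every constant in the iteration is either dimensional or depends on $y$ only through the single factor $e^{B_1\sigma^{-\lambda}(y)}$. This is feasible precisely because (i) the Sobolev constant of the family $\{(X_y,\theta_y)\}_{y\in X_{can}^\circ}$ is uniform, (ii) the $y$-dependence of the Poincar\'e constant is explicitly exponential in $\sigma^{-\lambda}$, and (iii) the right-hand side $F$ of the fiberwise Monge--Amp\`ere equation is bounded pointwise by $C\sigma^{-\lambda}(y)$ rather than merely in some $L^p$ norm, the latter being what the Schwarz estimate of Lemma \ref{lemma:Sch} secures.
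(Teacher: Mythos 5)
Your proposal is correct and fleshes out precisely the argument the paper gestures at: rewrite the fiberwise equation from \eqref{eqn:MA1} as $(\theta_y+\ddbar u)^{n-\kappa}=F\theta_y^{n-\kappa}$ with $u=t^{-1}(\varphi-\overline\varphi_y)|_{X_y}$ normalized to mean zero, note that the Schwarz lemma supplies the pointwise bound $0\le F\le C\sigma^{-\lambda}(y)$, and run Yau--Tosatti style Moser iteration using the uniform Sobolev constant of $(X_y,\theta_y)$, with the $L^1$ (or $L^2$) seed controlled via the $\alpha=1$ energy estimate and the Poincar\'e inequality whose constant grows like $Ce^{B\sigma^{-\lambda}(y)}$. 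The paper's text is only a one-line pointer to \cite{Y1, To}, and your reconstruction --- including the multiplier $|u|^{\alpha-1}u$, the discarding of the positive mixed terms in $\sum_k\omega_y^k\wedge\theta_y^{n-\kappa-1-k}$, and the final absorption of the polynomial factor $(1+\|F\|_\infty)^N$ into an exponential --- matches it faithfully.
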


\begin{prop}\label{prop:2}
On any compact subset $K\Subset X\backslash S$, there exists a constant $C=C(K)>1$ such that for all $t\in (0,1]$
$$C^{-1}\hat \omega_t \le \omega_t \le C \hat \omega_t,\quad \text{on }K.$$
\end{prop}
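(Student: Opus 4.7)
The goal is to establish the two-sided metric equivalence $\omega_t \asymp \hat\omega_t$ on $K\Subset X\setminus\mathbf{S}$, uniformly for $t\in(0,1]$. My plan has three steps.

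\textbf{Step 1 (volume form comparison).} Since $K\Subset X\setminus\mathbf{S}$, the Jacobian factor $H$ is bounded below by a positive constant on $K$. Expanding
$$ \hat\omega_t^n = \binom{n}{\kappa} t^{n-\kappa}\chi^\kappa\wedge\theta^{n-\kappa} + O(t^{n-\kappa+1}) = t^{n-\kappa}\, H\,\theta^n\bigl(1+O(t)\bigr) $$
and combining with $\omega_t^n = t^{n-\kappa}e^\varphi \Omega$ together with the uniform $L^\infty$ bound on $\varphi$, one obtains constants $0 < c_1 \le c_2$, depending on $K$ but not $t$, such that
$$ c_1 \le \frac{\omega_t^n}{\hat\omega_t^n} \le c_2 \quad \text{on } K, \quad \forall\, t\in(0,1]. $$

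\textbf{Step 2 (one-sided trace bound).} The core step is to prove $\tr_{\hat\omega_t}\omega_t \le C(K)$ on $K$. I would apply a Chern--Lu / Aubin--Yau second-order computation to a barrier of the form
$$ u = \log \tr_{\hat\omega_t}\omega_t - A\varphi + B\,\eta, $$
where $\eta$ is a cutoff built from $\sigma\circ\Phi$ (using \eqref{eqn:sigma}), equal to $1$ on $K$ and supported in a slightly enlarged neighborhood $K' \Subset X\setminus\mathbf{S}$, and $A, B > 0$ are constants to be chosen. The Chern--Lu inequality yields
$$ \Delta_{\omega_t}\log\tr_{\hat\omega_t}\omega_t \ge -C_0\, \tr_{\omega_t}\hat\omega_t - C_1 \quad \text{on } K', $$
provided a uniform-in-$t$ lower bound on the bisectional curvature of the reference metric on $K'$. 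Together with $\Delta_{\omega_t}\varphi = n - \tr_{\omega_t}\hat\omega_t$, choosing $A > C_0$ absorbs the bad term $-C_0\tr_{\omega_t}\hat\omega_t$, while the cutoff $B\eta$ forces the maximum of $u$ to lie in $K'$. The maximum principle then produces a uniform upper bound on $u$, and in particular $\tr_{\hat\omega_t}\omega_t \le C(K)$ on $K$.

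\textbf{Step 3 (closing the argument).} The upper bound $\tr_{\hat\omega_t}\omega_t \le C$ controls every eigenvalue $\mu_i$ of $\omega_t$ with respect to $\hat\omega_t$ by $Cn$. Combined with $\prod_i \mu_i \ge c_1 > 0$ from Step 1, each $\mu_i$ is bounded below by a positive constant depending only on $K$, which gives the reverse inequality $\omega_t \ge C^{-1}\hat\omega_t$ on $K$ and completes the proof.

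The main obstacle is Step 2: because $\hat\omega_t^{-1}$ blows up in the fiber directions as $t\to 0$, the bisectional curvature of $\hat\omega_t$ itself is not uniformly bounded below on $K'$. Following \cite{ST1, To, TWY}, I would circumvent this by replacing $\hat\omega_t$ in the Chern--Lu computation with an auxiliary \emph{semi-Ricci-flat} representative $\widetilde\omega_t = \chi + t(\theta + \ddbar\rho)$, where $\rho$ is chosen so that $\widetilde\omega_t\big|_{X_y}$ is fiberwise Ricci-flat for each $y\in X_{can}^\circ$; such $\widetilde\omega_t$ has bounded geometry on $K'$ uniformly in $t\in(0,1]$. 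The fiberwise Lemma \ref{lemma:1}, specifically $t^{-1}|\varphi - \overline\varphi_y|\le C_1 e^{B_1\sigma^{-\lambda}(y)}$ which is bounded on $K$, together with the Schwarz-type estimate of Lemma \ref{lemma:Sch}, controls the error terms produced by swapping $\hat\omega_t$ for $\widetilde\omega_t$ in the second-order calculation, so that the trace bound transfers back to $\tr_{\hat\omega_t}\omega_t$.
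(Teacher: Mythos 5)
Your overall scaffolding is sound: Step 1 (volume form comparison on $K$ via the lower bound on the Jacobian $H$ and the uniform $L^\infty$ bound on $\varphi_t$) is correct, and Step 3 (linear algebra with the volume ratio plus a one-sided trace bound) correctly closes the argument. You also correctly identify where the difficulty lies — the reference metric $\hat\omega_t=\chi+t\theta$ degenerates in the fiber directions as $t\to 0$, so its bisectional curvature is not uniformly bounded and the naive Aubin–Yau computation for $\log\tr_{\hat\omega_t}\omega_t$ produces a $t$-dependent constant $C_0(t)\to\infty$.

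The gap is in your proposed fix. You assert that the semi-Ricci-flat representative $\widetilde\omega_t=\chi+t(\theta+\ddbar\rho_{SF})$ ``has bounded geometry on $K'$ uniformly in $t\in(0,1]$.'' This is not true. Restricted to a fiber $X_y$, $\widetilde\omega_t|_{X_y}=t\,\omega_{SF,y}$ is $t$ times the Ricci-flat Calabi–Yau metric, whose sectional curvature scales like $t^{-1}$; since the general fiber is not a flat torus, the bisectional curvature of $\widetilde\omega_t$ still blows up as $t\to 0$. The semi-Ricci-flat ansatz does not cure the degeneracy; it is used in \cite{ST1,To,TWY} for a different purpose (to make the volume ratio $\omega_t^n/\widetilde\omega_t^n$ fiberwise-constant to leading order and to feed into the Calabi-type $C^3$ estimate), not because it has uniformly bounded curvature. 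Consequently the constant $C_0$ in your inequality is still $t$-dependent, and choosing $A>C_0$ forces $A\to\infty$, which is not absorbed by $\varphi$ alone. A secondary issue: the additive cutoff $+B\eta$ does not localize the maximum of $u$, since $\log\tr_{\hat\omega_t}\omega_t-A\varphi$ has no a priori bound off $K'$ and can dominate; the localization in \cite{To,TWY} is done by weighting with $\eE=e^{-e^{A\sigma^{-\lambda}}}$ so that the maximum principle can be applied globally on $X\setminus\mathbf{S}$.

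What actually closes the argument (and what the paper is pointing to by emphasizing ``Given the $C^0$-estimate in Lemma \ref{lemma:1}'') is to insert the \emph{rescaled fiberwise oscillation} $(\varphi_t-\overline\varphi_{\Phi(\cdot)})/t$ — which Lemma \ref{lemma:1} shows is uniformly bounded on $K'$ — into the barrier quantity itself, e.g.\ a quantity of the schematic form $\log\tr_{\omega_t}\theta - A\,(\varphi_t-\overline\varphi_{\Phi(\cdot)})/t$ (suitably weighted by $\eE$). The Laplacian of this fiberwise-recentered potential produces the good term proportional to $\tr_{\omega_t}\theta$ without the factor-of-$t$ loss, and the Chern–Lu term now only requires curvature bounds on the \emph{fixed} metric $\theta$. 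Lemma \ref{lemma:1} is therefore a structural ingredient of the barrier, not merely a tool to control errors from swapping reference metrics. Your proposal does not use it in this way, which is the essential missing idea.
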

Given  the $C^0$-estimate in Lemma \ref{lemma:1}, Proposition \ref{prop:2} can be proved by the $C^2$-estimate (\cite{Y1}) for Monge-Amp\`ere equation together with a modification as in \cite{ST1, To,TWY}, so we omit the proof.



Let us recall the construction of the canonical metric $\omega_{can}$ on $X_{can}^\circ$ (see \cite{ST1}). Define a function $F = \frac{\Phi_*\Omega}{\chi^\kappa}$ {on }$X_{can}^\circ,$
and $F$ is  in $L^{1+\varepsilon}$ for some small $\varepsilon>0$ (\cite{ST1}). The metric $\omega_{can}$ is obtained by solving the following complex Monge-Amp\`ere equation on $X_{can}$
$$(\chi+\ddbar \varphi_\infty)^\kappa =\binom{n}{\kappa} F e^{\varphi_\infty} \chi^\kappa,$$
for $\varphi_\infty\in \textnormal{PSH}(X_{can},\chi)\cap C^0(X_{can})\cap C^\infty(X_{can}^\circ)$. Then $\omega_{can} = \chi + \ddbar \varphi_\infty$, and in the following we will write $\chi_\infty = \omega_{can}$. 

Any smooth fiber $X_y$ with $y\in X_{can}^\circ$ is a Calabi-Yau manifold hence there exists a unique Ricci flat metric $\omega_{SF,y}\in [\theta_{y}]$ such that $\omega_{SF,y} = \theta_{y}+ \ddbar \rho_y$ for some $\rho_y\in C^\infty(X_y)$ with normalization $\dashint_{X_y}\rho_y \omega^{n-\kappa}_{X,y} = 0$. We write $\rho_{SF}(x) = \rho_{\Phi(x)}$ if $\Phi(x)\in X_{can}^\circ$. $\rho_{SF}$ is a smooth function on $X\backslash S$ and may blow up near the singular set $S$. Denote $\omega_{SF} = \theta + \ddbar \rho_{SF}$ which is smooth on $X\backslash S$, and by \cite{ST1} we know that
$\frac{\Omega}{\omega_{SF}^{n-\kappa}\wedge \chi^\kappa}$
is constant on the smooth fibers $X_y$ and is equal to $\Phi^* F$. For simplicity we will identify $F$ with $\Phi^* F$. Our arguments below are motivated by \cite{ST1,TWY}.

Denote $\eE=e^{-e^{A\sigma^{-\lambda}}}$
for suitably large constants $A,\,\lambda>1$. From the proof of Proposition \ref{prop:2}, we actually have that on $X\backslash S$ (\cite{To})
\begin{equation*}C^{-1}\eE \hat\omega_t\le \omega_t\le C \eE^{-1}\hat \omega_t,\quad \textnormal{for~ all~} t\in (0,1].\end{equation*}
Next we are going to show $\varphi_t\to \varphi_\infty = \Phi^* \varphi_\infty$ as $t\to 0$. Proposition \ref{prop:5.2} below can proved by following similar argument as in \cite{TWY}, but we present a slightly different argument in establishing {\bf Claim 2} below.
\begin{prop}\label{prop:5.2}
There exists a positive function $h(t)$ with $h(t)\to 0$ as $t\to 0$ such that 
\begin{equation}\label{eqn:uniform}\sup_{X\backslash S} \eE |\varphi_t - \varphi_\infty|\le h(t).\end{equation}
\end{prop}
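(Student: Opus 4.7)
The plan is to compare $\varphi_t$ against the natural adiabatic ansatz $\varphi_\infty + t\rho_{SF}$ by running a weighted maximum principle on $X^\circ = X\setminus S$, using $\eE$ as the weight to absorb every singularity that appears near $S$.

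First, on $X^\circ$ I would set $\eta_t = \chi_\infty + t\omega_{SF} = \chi + t\theta + \ddbar(\varphi_\infty + t\rho_{SF})$ and expand $\eta_t^n$ in powers of $t$. Because $\chi_\infty$ is pulled back from the $\kappa$-dimensional base $X_{can}$, every power $\chi_\infty^{\kappa+1}$ and higher vanishes, so
\[
\eta_t^n = \binom{n}{\kappa}t^{n-\kappa}\,\chi_\infty^\kappa\wedge \omega_{SF}^{n-\kappa} + \sum_{j=0}^{\kappa-1}\binom{n}{j}t^{n-j}\chi_\infty^j\wedge \omega_{SF}^{n-j}.
\]
The defining identities $\chi_\infty^\kappa = \binom{n}{\kappa}F e^{\varphi_\infty}\chi^\kappa$ (pulled back to $X$) and $F\,\omega_{SF}^{n-\kappa}\wedge\chi^\kappa = \Omega$ identify the leading piece with a fixed positive multiple of $t^{n-\kappa}e^{\varphi_\infty}\Omega$, while the subleading pieces are dominated by $t^{n-\kappa+1}\eE^{-M}\Omega$ for some fixed $M>0$, the inverse weight absorbing the blowup allowed by Proposition \ref{prop:2} and \eqref{eqn:sigma} near $S$. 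Absorbing the multiplicative constant into $\varphi_\infty$ by a harmless additive shift, I obtain
\[
\eta_t^n = t^{n-\kappa}e^{\varphi_\infty}\Omega\,(1+R_t),\qquad |R_t|\le C\,t\,\eE^{-M}.
\]

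Next, I fix a small $\delta>0$ and a large $K>0$ and consider the weighted test functions
\[
G_t^\pm = \eE\bigl(\pm(\varphi_t - \varphi_\infty - t\rho_{SF})\bigr) - K\,t^\delta.
\]
Lemma \ref{lemma:1} and the double-exponential decay of $\eE$ at $S$ guarantee that $G_t^\pm$ extends continuously to $X$ with $G_t^\pm\le 0$ on $S$. At an interior maximum $x_{\max}\in X^\circ$ of $G_t^+$ one uses the identity $\Delta_{\omega_t}(\varphi_t - \varphi_\infty - t\rho_{SF}) = n - \tr_{\omega_t}\eta_t$ (obtained by $\ddbar\log$ of the two Monge-Amp\`ere equations), the arithmetic--geometric inequality
\[
\tr_{\omega_t}\eta_t \ge n\,(1+R_t)^{1/n}\,e^{(\varphi_\infty - \varphi_t)/n},
\]
and the $\omega_t$-Laplacian/gradient control of $\eE$, read off its explicit expression together with Proposition \ref{prop:2} and Lemma \ref{lemma:Sch}. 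Choosing $A,\lambda$ in $\eE$ sufficiently large relative to $M$ forces $G_t^+\le 0$ everywhere; the symmetric analysis of $G_t^-$ gives the matching lower bound. Combined with $\eE\,t|\rho_{SF}|\to 0$ uniformly, which follows from Lemma \ref{lemma:1}, this yields \eqref{eqn:uniform} with $h(t) = C\,t^\delta\to 0$.

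The main obstacle is the Laplacian estimate for the weight $\eE$ with respect to the degenerating metric $\omega_t$: one must verify that $\eE^{-1}|\Delta_{\omega_t}\eE|$ and $\eE^{-2}|\nabla \eE|^2_{\omega_t}$ remain bounded by a controlled negative power of $t$ on the region $\{\eE\ge t^{\delta/2}\}$, despite the fiber-direction collapse of $\omega_t$. This is precisely why the double exponential $\eE = \exp(-\exp(A\sigma^{-\lambda}))$ is used: the inner exponential is tuned to dominate every inverse power of $\sigma$ produced by \eqref{eqn:sigma}, Proposition \ref{prop:2} and the error term $R_t$, while the outer exponential leaves just enough room to close the barrier inequality. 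Balancing the exponents $A,\lambda,M,\delta,K$ is the only genuinely new computation; every other ingredient (Lemma \ref{lemma:1}, Lemma \ref{lemma:Sch}, Proposition \ref{prop:2}, and the equations defining $\varphi_\infty$ and $\rho_{SF}$) is already in place.
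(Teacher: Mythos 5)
Your approach is genuinely different from the paper's, and it does not close as written. The paper argues in three stages: (i) a max-principle bound on the additive barriers $\psi_\epsilon^\pm = \varphi_t - \varphi_\infty - t\rho_\epsilon \mp \epsilon\log|s_D|_{h_D}$, where the logarithmic term $\epsilon\log|s_D|$ pushes any extremum away from $S$ into the region where everything is controlled (Claim~1); (ii) a soft $L^1$ convergence $\int_X|\varphi_t-\varphi_\infty|\theta^n\to 0$ obtained by cutting out a shrinking tubular neighborhood of $D$ and using the global $L^\infty$ bounds (Claim~2); (iii) an upgrade to the weighted $C^0$ estimate by the elliptic argument of \cite{TWY}. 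Your proposal replaces the additive log barrier by the multiplicative weight $\eE$ and tries to run a single weighted maximum principle, claiming the sharp rate $h(t)=Ct^\delta$ -- a rate the paper neither asserts nor needs.

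The gap is in the balancing step you flag as "the only genuinely new computation." At an interior maximum of $\eE u$ with $u=\varphi_t-\varphi_\infty-t\rho_{SF}>0$, the critical-point relation $\eE\nabla u = -u\nabla\eE$ together with $\ddbar(\eE u)\le 0$ gives $\ddbar u \le u\bigl(\ddbar v + \partial v\wedge\bar\partial v\bigr)$ where $\eE=e^{-v}$, $v=e^{A\sigma^{-\lambda}}$; the weight $\eE$ cancels completely from the right side, so the comparison $\omega_t\le\eta_t + u\,\Theta$ with $\Theta\lesssim e^{2A\sigma^{-\lambda}}\sigma^{-2\lambda-2}\chi$ has a coefficient that is independent of $t$. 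Feeding this through the Monge--Amp\`ere equations yields a self-consistent inequality of the form $u\le \kappa\log\bigl(1+Cu\,e^{2A\sigma^{-\lambda}}\sigma^{-2\lambda-2}\bigr) + O(|R_t|) + O(t|\rho_{SF}|)$. The last two terms do decay in $t$, but they are dominated by the first term, which is $t$-independent: on the region where $\sigma$ is small the inequality only gives $u\lesssim \sigma^{-\lambda}$, hence $\eE u$ bounded -- not $\eE u\le Kt^\delta$. In other words the double exponential in $\eE$ tames the $\sigma$-blowup but contributes nothing towards $t$-decay, so no choice of $A,\lambda,M,\delta,K$ makes the maximum of $\eE u$ shrink with $t$. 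This is precisely the obstruction the paper's two-stage structure (qualitative $L^1$ convergence first, then the \cite{TWY} upgrade) is designed to circumvent. Your reduction to a single max-principle step omits the mechanism that actually produces the decay.
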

\begin{proof}
Let $D\subset X_{can}$ be an ample divisor such that $X_{can}\backslash X_{can}^\circ \subset D$, $D\in \mu K_{X_{can}}$ for some $\mu\in\mathbb N$. Choose a continuous hermitian metric on $[D]$, $h_D = h_{FS}^{\mu/m} e^{-\mu\varphi_\infty}$ and a smooth defining section $s_D$ of $[D]$, where $h_{FS}$ is the Fubini-Study metric induced from $\mathcal O_{\mathbb {CP}^{N_m}}(1)$. Clearly $\ddbar \log h_D = \mu(\chi + \ddbar \varphi_\infty) =\mu\chi_\infty $. For small $r>0$, let $$B_r(D) = \{x\in X_{can}~|~d_\chi(x, D)\le r\}$$ be the tubular neighborhood of $D$ under the metric $d_\chi$, and denote $\mathcal B_r = \Phi^{-1}\xk{ B_r(D)  }\subset X$.

Since both $\varphi_t$ and $\varphi_\infty$ are bounded in $L^\infty$-norm, there exists $r_\epsilon$ with $\lim_{\epsilon\to 0}r_\epsilon = 0$ such that  for all $t\in (0,1]$
\begin{equation*}
\sup_{\mathcal B_{r_\epsilon}\backslash S}(\varphi_t - \varphi_\infty + \epsilon \log \abs{s_D}_{h_D})<-1,\quad\inf_{\mathcal B_{r_\epsilon}\backslash S} (\varphi_t - \varphi_\infty -\epsilon \log \abs{s_D}_{h_D})>1.
\end{equation*}
Let $\eta_\epsilon$ be a smooth cut-off function on $X_{can}$ such that $\eta_\epsilon = 1$ on $X_{can}\backslash B_{r_\epsilon}(D)$ and $\eta_\epsilon = 0$ on $B_{r_\epsilon/2}(D)$. Write $\rho_\epsilon = (\Phi^*\eta_\epsilon) \rho_{SF}$, and $\omega_{SF,\epsilon} = \omega_{SF} + \ddbar \rho_\epsilon$. Define the twisted differences of $\varphi_t$ and $\varphi_\infty$ by 
$$\psi_\epsilon^\pm = \varphi_t - \varphi_\infty - t \rho_\epsilon \mp \epsilon\log\abs{s_D}_{h_D}.$$ By similar argument in \cite{ST1} we have

{\bf Claim 1:} there exists an $\epsilon_0>0$ such that for any $\epsilon\in (0,\epsilon_0)$, there exists a $\tau_\epsilon$ such that for all $t\le \tau_\epsilon$, we have
$$\sup_{X\backslash S} \psi_\epsilon^-(t,\cdot)\le 3\mu\epsilon,\quad \inf_{X\backslash S} \psi_\epsilon^+(t,\cdot)\ge -3\mu\epsilon.$$

{\bf Claim 2:} We have 
$$\int_X |\varphi_t - \varphi_\infty|\theta^n\to 0,\quad\text{as }t\to 0,$$
where $\varphi_t$ is the K\"ahler potential of $\omega_t$ in \eqref{eqn:tKE1}.
\begin{proof}[Proof of Claim 2]
For any $\eta>0$, we may take $\mathcal B_{R_\eta}\subset X$ small enough so that 
$\int_{\mathcal B_{R_\eta}} \theta^n < \frac \eta {10}.$ Take $\epsilon<\eta/10\mu$ small enough so that $r_\epsilon < R_\eta$. From {\bf Claim 1} when $t<\tau_\epsilon$
\begin{align*}
\int_X |\varphi_t - \varphi_\infty |\theta^n & =\int_{\mathcal B_{R_\eta}} |\varphi_t - \varphi_\infty |\theta^n+\int_{X\backslash \mathcal B_{R_\eta}} |\varphi_t - \varphi_\infty |\theta^n \\
&\le C\eta+ \int_{X\backslash \mathcal B_{R_\eta}}\xk{ t |\rho_{SF}| + \epsilon | \log\abs{s_D}_{h_D}  |  }\theta^n\\
&\le C\eta.
\end{align*}

\end{proof}
Given {\bf Claim 2}, Proposition \ref{prop:5.2} follows similarly as in \cite{TWY}, so we skip it.
\end{proof}


We will apply an argument  in \cite{TWY} with a slight modification to show the lemma below:
\begin{lemma}
$$\lim_{t\to 0 }\eE  t \dot\varphi_t= 0.$$
\end{lemma}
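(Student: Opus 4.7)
The plan is to derive a second-order linear equation for $t\dot\varphi_t$ modulo the known function $\varphi_t - \Phi^*\varphi_\infty$, and then run a weighted maximum principle following \cite{TWY}, adapted to the cutoff $\eE$. Taking the logarithmic derivative of \eqref{ke 1} in $t$ yields
\[
\tr_{\omega_t}(\theta+\ddbar\dot\varphi_t) = \frac{n-\kappa}{t}+\dot\varphi_t,
\]
equivalently $\Delta_{\omega_t}(t\dot\varphi_t) - t\dot\varphi_t = (n-\kappa) - t\tr_{\omega_t}\theta$. Using the decomposition $\omega_t = \Phi^*\chi_\infty + t\theta + \ddbar(\varphi_t - \Phi^*\varphi_\infty)$ on $X\setminus S$ and taking $\tr_{\omega_t}$ gives $t\tr_{\omega_t}\theta = n - \tr_{\omega_t}\Phi^*\chi_\infty - \Delta_{\omega_t}(\varphi_t - \Phi^*\varphi_\infty)$. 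Setting $u_t := t\dot\varphi_t - (\varphi_t - \Phi^*\varphi_\infty)$, these identities combine into the key equation
\begin{equation}\label{eqn:proposal-u}
\Delta_{\omega_t} u_t - u_t = (\tr_{\omega_t}\Phi^*\chi_\infty - \kappa) + (\varphi_t - \Phi^*\varphi_\infty).
\end{equation}

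I would then show that $\eE$ times the right-hand side of \eqref{eqn:proposal-u} tends to zero uniformly on $X\setminus S$. The term $\eE(\varphi_t - \Phi^*\varphi_\infty)$ is controlled by Proposition \ref{prop:5.2}. For the first summand, on any compact $K\Subset X\setminus S$, the $C^0$-convergence $\omega_t\to \Phi^*\chi_\infty$ on $K$ (available via the $C^2$-estimate invoked after Proposition \ref{prop:2}) and the identity $\tr_{\chi_\infty}\chi_\infty = \kappa$ on $X_{can}^\circ$ imply $\tr_{\omega_t}\Phi^*\chi_\infty\to \kappa$ uniformly on $K$; outside such $K$, the at-most-polynomial blow-up of $\tr_{\omega_t}\Phi^*\chi_\infty = \tr_{\omega_t}\chi + \Delta_{\omega_t}\Phi^*\varphi_\infty$ near $S$ (bounded via Lemma \ref{lemma:Sch} and the smoothness of $\varphi_\infty$ on $X_{can}^\circ$) is absorbed by the super-exponential decay of $\eE$.

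The final step is a weighted maximum principle for $\pm \eE u_t$. Since $u_t$ has at most polynomial growth in $\sigma^{-1}$ near $S$ (from $\omega_t\ge c\eE\hat\omega_t$ and \eqref{eqn:proposal-u}) while $\eE$ decays doubly exponentially, $\eE u_t$ extends continuously by zero across $S$ and its supremum is attained at some interior point $x_0\in X\setminus S$. Expanding $\Delta_{\omega_t}(\eE u_t)(x_0)\le 0$ and eliminating $\nabla u_t$ via $\nabla(\eE u_t)(x_0)=0$ reduces the problem to controlling $|\Delta_{\omega_t}\eE|$ and $|\nabla \eE|^2_{\omega_t}/\eE$; both are uniformly bounded by direct computation using \eqref{eqn:sigma}, Lemma \ref{lemma:Sch} and the form $\eE = e^{-e^{A\sigma^{-\lambda}}}$, since the double exponential annihilates all polynomial factors in $\sigma^{-1}$ arising from derivatives of $\tau = \sigma^{-\lambda}$. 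This yields $\sup_{X\setminus S}\eE|u_t|\to 0$, and combined with Proposition \ref{prop:5.2} gives $\eE\, t\dot\varphi_t \to 0$.

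The main obstacle will be closing the weighted maximum-principle argument: at $x_0$ the coefficient $1 + Q/\eE$ of $\eE u_t$ (with $Q = \Delta_{\omega_t}\eE - 2|\nabla\eE|^2_{\omega_t}/\eE$) may degenerate as $x_0$ approaches $S$, since both $Q$ and $\eE$ tend to zero. Controlling this requires precisely the double-exponential form of $\eE$ — not a single exponential — so that polynomial-in-$\sigma^{-1}$ growth of $\Delta_{\omega_t}\tau$ and $|\nabla\tau|^2_{\omega_t}$ cannot ruin the sign of the elliptic operator. This is the technical modification of the \cite{TWY} argument alluded to in the text preceding the lemma.
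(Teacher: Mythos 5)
Your derivation of the identity
\[
\Delta_{\omega_t} u_t - u_t = (\tr_{\omega_t}\Phi^*\chi_\infty - \kappa) + (\varphi_t - \Phi^*\varphi_\infty),
\qquad u_t := t\dot\varphi_t - (\varphi_t - \Phi^*\varphi_\infty),
\]
is correct, but the argument is circular at the crucial step where you need $\eE(\tr_{\omega_t}\Phi^*\chi_\infty - \kappa)\to 0$. You invoke the $C^0$-convergence $\omega_t\to\Phi^*\chi_\infty$ on compacts of $X\setminus S$, but at this stage of the paper that convergence is not yet available: Proposition \ref{prop:2} only gives the two-sided equivalence $C^{-1}\hat\omega_t\le\omega_t\le C\hat\omega_t$, and the refined $C^2$-estimate $\limsup_{t\to 0}\sup_K(\tr_{\omega_t}\chi_\infty-\kappa)\le 0$ is stated \emph{after} the present lemma and is derived \emph{from} Corollary \ref{cor 5.1}, which in turn depends on the present lemma. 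All that is known a priori about the right-hand side of your equation is that $\tr_{\omega_t}\Phi^*\chi_\infty-\kappa$ is bounded (from the Schwarz lemma), so your weighted maximum principle can only give $\eE|u_t|=O(1)$, not $o(1)$, which is not enough.

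By contrast, the paper's proof works entirely in the time variable $s=\log t$: it first establishes by a spatial maximum principle the one-sided bound $\partial_s^2\varphi\le C$, and then combines this with the uniform convergence $\eE\varphi(s)\to\eE\varphi_\infty$ of Proposition \ref{prop:5.2} via a mean value argument over short $s$-intervals of length $\sqrt\epsilon$. This produces the decay of $\eE\,\partial_s\varphi$ without ever needing the $C^0$-convergence of $\omega_t$ or the trace estimate $\tr_{\omega_t}\chi_\infty\to\kappa$, precisely because those are downstream consequences of the present lemma. If you wish to salvage the elliptic-PDE route, you would need an independent source for the smallness of $\eE(\tr_{\omega_t}\Phi^*\chi_\infty-\kappa)$ that does not pass through Corollary \ref{cor 5.1}; as written, that ingredient is missing.
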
 
\begin{proof}Denote $s = \log t$ for $t\in (0,1]$. We have $t\dot \varphi = \frac{\partial \varphi}{\partial s}$. Taking derivatives on both sides of the equation \eqref{eqn:tKE1} and by maximum principle arguments we then get (see also \cite{TWY})
\begin{equation}\label{eqn:2nd upper}\frac{\partial^2 \varphi}{\partial s^2} = t \dot \varphi + t^2 \ddot \varphi \le C,\quad\text{here }\ddot \varphi = \frac{\partial^2\varphi}{\partial t^2}.\end{equation}
By the uniform convergence  \eqref{eqn:uniform} of $\eE \varphi(s)\to \eE \varphi_\infty$ as $s\to -\infty$, for any $\epsilon>0$, there is an $S_\epsilon$ such that for all $s_1,\, s_2\le -S_\epsilon$, we have  $ \sup_X |\eE \varphi(s_1) - \eE \varphi(s_2) |\le \epsilon$. For any $s< -S_\epsilon -1$ and $x\in X\backslash S$, by mean value theorem 
$$\eE \partial_s \varphi(s_x,x) = \frac{1}{\sqrt{\epsilon} } \int_s ^{s+\sqrt{\epsilon}} \partial_s ( \eE \varphi  )ds\ge -\sqrt{\epsilon},\quad \text{for some }s_x\in [s,s+\sqrt{\epsilon}].$$ By the upper bound \eqref{eqn:2nd upper}, it follows that
$\eE\partial_s \varphi(s,x)\ge- C \sqrt{\epsilon}  - \sqrt{\epsilon}.$
Similarly 
$$\eE \partial_s \varphi(\hat s_x,x) = \frac{1}{\sqrt{\epsilon}}\int_{s-\sqrt{\epsilon}}^s \partial_s(\eE \varphi(\cdot, x))ds\le \sqrt{\epsilon},\text{ for some }\hat s_x\in [s-\sqrt{\epsilon},s], $$ from \eqref{eqn:2nd upper} we get
$\eE \partial_s \varphi(s,x) \le C\sqrt{\epsilon} + \sqrt{\epsilon}.$ Hence we show that for any $s\le -S_\epsilon-1$ or $t=e^s\le e^{-S_\epsilon - 1}$, it holds that
$$\sup_{x\in X\backslash S} | \eE\partial_s \varphi(s,x) | = \sup_{x\in X\backslash S} |\eE t\partial_t \varphi(t,x)|\le C \sqrt{\epsilon},$$
so the lemma follows.

\end{proof}
\begin{corr}\label{cor 5.1}
There exists a positive decreasing function $h(t)$ with $h(t)\to 0$ as $t\to 0$ such that
$$\sup_X \eE \xk{|\varphi_t - t \dot\varphi_t - \varphi_\infty| + t|\dot\varphi_t|}\le h(t).$$
\end{corr}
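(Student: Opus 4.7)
The plan is to derive Corollary \ref{cor 5.1} directly from the triangle inequality applied to the two uniform estimates already established, namely Proposition \ref{prop:5.2} and the preceding lemma. Both of these give uniform convergence of certain $\eE$-weighted quantities to zero as $t \to 0$, and the corollary simply packages them together into a single decreasing modulus $h(t)$.

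Concretely, first I would invoke Proposition \ref{prop:5.2} to obtain a positive function $h_1(t)$ with $h_1(t) \to 0$ as $t \to 0$ such that
\begin{equation*}
\sup_{X \setminus S} \eE\,|\varphi_t - \varphi_\infty| \le h_1(t).
\end{equation*}
Then I would invoke the preceding lemma, $\lim_{t \to 0}\eE\, t\dot\varphi_t = 0$, reinterpreted as the existence of a positive function $h_2(t)$ with $h_2(t) \to 0$ such that
\begin{equation*}
\sup_{X \setminus S} \eE\, t|\dot\varphi_t| \le h_2(t).
\end{equation*}
(One may take $h_2(t) = \sup_{s \le t} \sup_{X\setminus S} \eE\, s|\dot\varphi_s|$ to ensure monotonicity.)

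Next, the triangle inequality yields pointwise on $X \setminus S$
\begin{equation*}
\eE\,|\varphi_t - t\dot\varphi_t - \varphi_\infty| \le \eE\,|\varphi_t - \varphi_\infty| + \eE\, t|\dot\varphi_t| \le h_1(t) + h_2(t),
\end{equation*}
so adding the contribution of $\eE\, t|\dot\varphi_t|$ gives
\begin{equation*}
\sup_{X \setminus S}\eE\bigl( |\varphi_t - t\dot\varphi_t - \varphi_\infty| + t|\dot\varphi_t|\bigr) \le h_1(t) + 2 h_2(t).
\end{equation*}
Setting $h(t) := h_1(t) + 2h_2(t)$ (or its monotone envelope $\sup_{s\le t}(h_1(s) + 2h_2(s))$) gives a positive decreasing function tending to $0$ as $t\to 0$. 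Since $\eE$ vanishes on $S$ and the integrand on the left is bounded on $X \setminus S$, the supremum on $X$ equals the supremum on $X \setminus S$, which finishes the proof.

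There is essentially no obstacle here: the content of the corollary lies entirely in the two ingredients proved just above, and this is a purely formal assembly step. The only mild subtlety is the passage from $\sup_{X\setminus S}$ to $\sup_X$, which is handled by the fact that $\eE = e^{-e^{A\sigma^{-\lambda}}}$ extends continuously by $0$ across $S'=\Phi(S)$, and $\varphi_t$, $\dot\varphi_t$, $\varphi_\infty$ remain bounded in the region where $\eE$ is not small.
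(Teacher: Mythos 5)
Your proof is correct and is essentially the intended derivation; the paper states Corollary \ref{cor 5.1} without proof precisely because it follows immediately, via the triangle inequality, from Proposition \ref{prop:5.2} and the preceding lemma on $\eE\,t\dot\varphi_t$, with the monotone envelope taking care of the ``decreasing'' requirement and the vanishing of $\eE$ across $S$ handling the passage from $\sup_{X\setminus S}$ to $\sup_X$.
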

From Corollary \ref{cor 5.1} a straightforward adaption of the arguments of \cite{TWY}, we have an improvement of local $C^2$-estimate:
\begin{lemma}
On any compact subset $K\subset\subset X\backslash S$, we have
$$\limsup_{t\to 0}\bk{\sup_K \xk{ \tr_{\omega_t}\chi_\infty - \kappa  }}\le 0.$$
\end{lemma}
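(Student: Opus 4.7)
The plan is to combine a Chern--Lu / Schwarz type inequality for the holomorphic map $\Phi\colon (X,\omega_t)\to(X_{can}^\circ,\chi_\infty)$ with a key identity obtained from differentiating the twisted K\"ahler--Einstein equation \eqref{eqn:tKE1} in $t$, and then pass to the limit using Corollary \ref{cor 5.1}.

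First, I would derive the identity. Setting $\phi_t := \varphi_t - t\dot\varphi_t - \varphi_\infty$, differentiating \eqref{eqn:tKE1} gives $\tr_{\omega_t}(\theta+\ddbar\dot\varphi_t) = (n-\kappa)/t + \dot\varphi_t$. Combined with $\Delta_{\omega_t}\varphi_t = n - \tr_{\omega_t}(\chi + t\theta)$ and $\chi = \chi_\infty - \ddbar\varphi_\infty$, a direct computation yields
$$\Delta_{\omega_t}\phi_t = \kappa - t\dot\varphi_t - \tr_{\omega_t}\chi_\infty.$$
By Corollary \ref{cor 5.1}, both $\phi_t$ and $t\dot\varphi_t$ tend to $0$ uniformly on any compact subset of $X\setminus\mathbf{S}$.

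Next, fix $K\Subset K'\Subset X\setminus\mathbf{S}$ so that $\Phi(K')\Subset X_{can}^\circ$; on $\Phi(K')$ the metric $\chi_\infty=\omega_{can}$ is smooth with bounded holomorphic bisectional curvature. I would apply the Chern--Lu inequality to the holomorphic map $\Phi$, combined with the Ricci identity $Ric(\omega_t) = -\omega_t + t\theta$ and Proposition \ref{prop:2}, to obtain on $K'$
$$\Delta_{\omega_t}\log\tr_{\omega_t}\chi_\infty \;\geq\; -1 - B\,\tr_{\omega_t}\chi_\infty - Ct,$$
where $B,C$ depend only on $K'$; the constant $-1$ comes from contracting the twisted Einstein term $-\omega_t$ in $Ric(\omega_t)$ with $\chi_\infty$, while the remaining $t\theta$ piece contributes an $O(t)$ error by Proposition \ref{prop:2}. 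For $A>B$, I then consider the test function $Q_t = \log\tr_{\omega_t}\chi_\infty - A\phi_t + \eta$, where $\eta\leq 0$ is a fixed cutoff (depending on $K,K'$ but not on $t$) with $\eta\equiv 0$ on $K$ and forcing the maximum of $Q_t$ into the interior of $K'$, chosen so that $\Delta_{\omega_t}\eta$ is uniformly bounded (e.g.\ $\eta$ pulled back from a function on $\Phi(K')\subset X_{can}^\circ$, whose Laplacian with respect to $\omega_t$ is bounded since $\omega_t\simeq\hat\omega_t$ horizontally on $K'$). At the maximum $z_0$, the inequality $\Delta_{\omega_t}Q_t(z_0)\leq 0$ together with the two bounds above gives $(A-B)\tr_{\omega_t}\chi_\infty(z_0)\leq A\kappa + O(1)$, and hence
$$\sup_K \log\tr_{\omega_t}\chi_\infty \;\leq\; \log\!\Bigl(\tfrac{A}{A-B}\kappa\Bigr) + h(t,A),\qquad h(t,A)\to 0 \text{ as } t\to 0,$$
using $Q_t(x)\leq Q_t(z_0)$ for $x\in K$ and the uniform smallness of $\phi_t$, $t\dot\varphi_t$ on $K'$. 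Letting $t\to 0$ first and then $A\to\infty$ yields $\limsup_{t\to 0}\sup_K(\tr_{\omega_t}\chi_\infty - \kappa)\leq 0$.

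The main obstacle is the Chern--Lu step: since $\chi_\infty$ is only semipositive on $X$ (being the pullback of a K\"ahler metric from the lower-dimensional base $X_{can}^\circ$), the inequality must be set up as a genuine Schwarz lemma for the holomorphic map $\Phi$ into the base, rather than as a metric comparison on $X$; one must also carefully verify the $O(t)$ control of the twisted Ricci contribution via Proposition \ref{prop:2}. A secondary technical point is ensuring the cutoff $\eta$ has $\omega_t$-Laplacian bounded uniformly in $t$: since $\omega_t$ is uniformly equivalent to $\hat\omega_t=\chi+t\theta$ on $K'$, its inverse is $O(1)$ in horizontal directions and $O(t^{-1})$ in vertical ones, so taking $\eta$ a pullback from $X_{can}^\circ$ (so it has no vertical derivatives) makes $\Delta_{\omega_t}\eta$ uniformly bounded.
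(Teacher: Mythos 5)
Your proposal takes essentially the same route as the argument the paper defers to (the adaptation of \cite{TWY}), and the main steps are correct: the identity
$\Delta_{\omega_t}\phi_t = \kappa - t\dot\varphi_t - \tr_{\omega_t}\chi_\infty$
obtained by differentiating \eqref{eqn:tKE1} in $t$ (I verified this computation), the Chern--Lu/Schwarz estimate for the holomorphic map $\Phi$ with $\mathrm{Ric}(\omega_t)=-\omega_t+t\theta$ contributing the leading $-1$, and the maximum-principle combination of the two, letting $t\to 0$ and then $A\to\infty$. The only place that needs to be tightened is the cutoff. You assert that a bounded $\eta\le 0$ ``forces the maximum of $Q_t$ into the interior of $K'$,'' but for that you must first record that $\tr_{\omega_t}\chi_\infty$ is \emph{uniformly in $t$} bounded on a slightly larger compact set $K''\Subset X\backslash S$; this follows from Proposition \ref{prop:2} (equivalently $\omega_t\ge C(K'')^{-1}\hat\omega_t$ on $K''$ together with $\chi_\infty\le C(K'')\hat\omega_t$), and without it there is no depth of ``well'' that a bounded $\eta$ could provide, since near $S$ the quantity $\tr_{\omega_t}\chi_\infty$ degenerates. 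Once this uniform bound is in hand, choose $K\Subset K'\Subset K''$, pick $\eta$ pulled back from $X_{can}^\circ$ with $\eta\equiv 0$ on $K$ and $\eta\le -\log C_0(K'')-10$ on $K''\setminus K'$, and then for $t$ small (so that $A\sup_{K''}|\phi_t|$ is small by Corollary \ref{cor 5.1}) the maximum of $Q_t$ over $K''$ does land in $K'$, which is interior, so $\Delta_{\omega_t}Q_t\le 0$ there. Your observation that $\eta$ should be pulled back so that $\Delta_{\omega_t}\eta=\tr_{\omega_t}\Phi^*\ddbar\tilde\eta$ is controlled via Lemma \ref{lemma:Sch} (rather than picking up $O(t^{-1})$ from vertical directions) is exactly right. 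Also, the precise order of the error from the $t\theta$ term in the Chern--Lu inequality does not need to be $O(t)$; an $O(1)$ bound (which is what $\theta\le Ct^{-1}\omega_t$ directly yields) already suffices, since the additive constant gets divided by $A-B$ in the end.
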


With the local $C^2$ estimate (see Proposition \ref{prop:2}), following standard local $C^3$-estimates (\cite{Y1, PSS,SW}), we have
\begin{lemma}\label{prop:Calabi}
For any compact $K\Subset X\backslash S$, there exists a $C= C(K)>0$ such that 
$$\sup_K  \abs{\nabla_{\theta} \omega_t  } \le C t^{-1}.$$ 
\end{lemma}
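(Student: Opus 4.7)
The plan is to adapt Yau's classical Calabi-type third-order estimate \cite{Y1}, in the collapsing form developed by Phong-Sesum-Sturm \cite{PSS} and Sherman-Weinkove \cite{SW}, to our twisted K\"ahler-Einstein setting \eqref{eqn:tKE1}. Throughout, fix nested compact subsets $K \Subset K' \Subset X \setminus \mathbf{S}$, on which Proposition \ref{prop:2} provides $C^{-1}\hat\omega_t \le \omega_t \le C\hat\omega_t$. In particular, since $\omega_t \ge C^{-1}(\chi + t\theta) \ge C^{-1} t \theta$, we obtain the crucial trace bound
\begin{equation*}
\tr_{g_t} \theta \le C t^{-1} \quad \text{on } K',
\end{equation*}
whose fiber contribution is responsible for the $t^{-1}$ blow-up in the final statement.

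Introduce the Calabi-type quantity $S = |\nabla_\theta g_t|_{g_t}^2$, where $\nabla_\theta$ denotes the difference tensor between the Chern connections of $g_t$ and $\theta$, and indices are contracted using $g_t$. The first step is to compute $\Delta_{g_t} S$ using the Monge-Amp\`ere equation \eqref{eqn:tKE1} (differentiated twice) together with the Ricci identity and the bisectional curvature bound of $\theta$. The standard computation yields a differential inequality of the form
\begin{equation*}
\Delta_{g_t} S \ge -C(\tr_{g_t}\theta) \cdot S - C(\tr_{g_t}\theta)^3 + c_0 |\nabla\nabla g_t|_{g_t}^2
\end{equation*}
for uniform constants $c_0, C > 0$, since $\mathrm{Ric}(\omega_t) = -\omega_t + t\theta$ gives uniformly bounded Ricci contributions.

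The second step pairs this with the standard Schwarz-lemma calculation for $\tr_{g_t} \theta$, which in the form used for $C^2$ estimates gives
\begin{equation*}
\Delta_{g_t}(\tr_{g_t}\theta) \ge -C(\tr_{g_t}\theta)^2 + c_1 \frac{S}{\tr_{g_t}\theta}.
\end{equation*}
Form $H = S + A(\tr_{g_t}\theta)^2$ for $A$ sufficiently large, multiply by a cut-off function $\eta$ supported in $K'$ with $\eta \equiv 1$ on $K$, and apply the maximum principle at the maximum of $\eta H$. Using $\tr_{g_t}\theta \le C t^{-1}$ on the support of $\eta$, the terms that are quartic in $\tr_{g_t}\theta$ force $\sup_K S \le C t^{-2}$. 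This immediately gives $\sup_K |\nabla_\theta \omega_t|_{g_t} \le C t^{-1}$; since on $K$ the metrics $g_t$ and $\theta$ satisfy $\theta \le C t^{-1} g_t$, we convert to the $\theta$-norm losing only uniform constants, obtaining the claimed bound $\sup_K |\nabla_\theta \omega_t|_\theta \le C t^{-1}$.

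The main technical obstacle is bookkeeping the powers of $t^{-1}$: the Calabi inequality involves products of $\tr_{g_t}\theta$ with $S$ and $|\nabla\nabla g_t|^2$, each of which carries its own collapsing rate, and one must verify that the "good" term $c_0 |\nabla\nabla g_t|_{g_t}^2$ produced by differentiating the Monge-Amp\`ere equation dominates the "bad" curvature-of-$\theta$ terms up to order $t^{-2}$, but no worse. A secondary difficulty is arranging the cut-off function $\eta = \rho(r_\theta)$ so that $|\nabla \eta|_{g_t}$ and $\Delta_{g_t}\eta$ are controlled on $K'$ using the $C^2$-estimate; this is handled by taking $\eta$ to depend on the $\theta$-distance to $\partial K'$ and using $\omega_t \le C\hat\omega_t$ to bound its derivatives in $g_t$-norm, at the cost of an additional but harmless factor of $t^{-1}$ already absorbed into the final constant.
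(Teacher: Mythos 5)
Your proposal follows the same PSS/SW Calabi-estimate route that the paper references, but there are several concrete problems in the execution.

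\textbf{First, the statement is misread.} In this paper the macro \verb|\abs{x}| expands to $|x|^2$, so the lemma asserts $\sup_K|\nabla_\theta\omega_t|^2\le Ct^{-1}$, equivalently $|\nabla_\theta\omega_t|\le Ct^{-1/2}$. Your argument aims for, and at best delivers, the strictly weaker bound $|\nabla_\theta\omega_t|\le Ct^{-1}$ (you claim $S:=|\nabla_\theta g_t|^2_{g_t}\le Ct^{-2}$). The distinction is not cosmetic: restricting to a fiber $X_y\subset K$ where $g_t\approx t\theta_y$, a $(0,3)$-tensor $T$ satisfies $|T|_{\theta_y}\approx t^{3/2}|T|_{g_t}$, so the sharp $S\le Ct^{-1}$ gives $\|\nabla_{\theta_y}(\omega_t|_{X_y})\|_{\theta_y}=O(t)$ and hence the uniform $C^1(X_y,\theta_y)$-bound on $t^{-1}\omega_t|_{X_y}$ used in Step~1 of the proof of Theorem~\ref{main4}. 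Your weaker $S\le Ct^{-2}$ only gives $\|\nabla_{\theta_y}(t^{-1}\omega_t|_{X_y})\|_{\theta_y}=O(t^{-1/2})$, which blows up, so the downstream argument fails.

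\textbf{Second, the norm conversion uses the wrong inequality.} You cite $\theta\le Ct^{-1}g_t$ to pass from $|\cdot|_{g_t}$ to $|\cdot|_\theta$. For a tensor with three lower indices, $\theta\le Ct^{-1}g_t$ means $g_t^{-1}\le Ct^{-1}\theta^{-1}$, i.e.\ $|T|_{g_t}\le (Ct^{-1})^{3/2}|T|_\theta$, which is a \emph{lower} bound on $|T|_\theta$. The inequality you actually need is the uniform upper bound $g_t\le C\theta$ on $K$ (which holds by Proposition~\ref{prop:2} since $\hat\omega_t=\chi+t\theta\le C\theta$), giving $g_t^{-1}\ge c\theta^{-1}$ and hence $|T|_\theta\le C|T|_{g_t}$ with a $t$-independent constant.

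\textbf{Third, the maximum principle bookkeeping does not close.} Absorbing the bad $-C(\tr_{g_t}\theta)S\sim -Ct^{-1}S$ term from $\Delta_{g_t}S$ using $A\Delta(\tr_{g_t}\theta)^2\ge 2Ac_1 S-2AC(\tr_{g_t}\theta)^3$ forces $A\sim t^{-1}$; this makes the bad cubic term $A(\tr_{g_t}\theta)^3\sim t^{-4}$, and (since the cut-off terms such as $|\nabla\eta|^2_{g_t},|\Delta_{g_t}\eta|\sim t^{-1}$ are themselves of the same order as the coefficient of $S$) the naive maximum-principle inequality yields at best $S\lesssim t^{-4}$, not $t^{-2}$ and certainly not the required sharp $t^{-1}$. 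Obtaining $S\le Ct^{-1}$ in the collapsing fibration setting is genuinely more delicate than adding $(\tr_{g_t}\theta)^2$; this is precisely the point of the adapted third-order estimates of \cite{To,TWY}, which the paper is quoting, and which exploit the product-like structure of $\hat\omega_t$ near $K$ rather than treating $\theta$ as a crude reference metric.
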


We have built up all the necessary ingredients to prove Theorem \ref{main4}, whose proof is almost identical to that of Theorem 1.3 in \cite{TWY}. For completeness, we sketch the proof below.
\begin{proof}[Proof of Theorem \ref{main4}]
Fix a compact subset $K'\subset X_{can}^\circ$ and let $K = \Phi^{-1}(K')$. By the Calabi $C^3$ estimate in Lemma \ref{prop:Calabi}, it follows that
\begin{equation*}
\| t^{-1} \omega_t |_{X_y}\|_{C^1(X_y,\theta_{y})}\le C,\quad t^{-1}\omega_t |_{X_y}\ge c\; \theta_{y},
\end{equation*}
for all $y\in K'$ and  $\theta_{y} = \theta|_{X_y}$. 

\begin{enumerate}[label={\bf Step \arabic*:}]
\item Define a function $f$ on $X_y$ by
$$f = \frac{(t^{-1}\omega_t|_{X_y})^{n-\kappa}}{\omega_{SF,y}^{n-\kappa}} = \binom{n}{\kappa}\frac{(\omega_t|_{X_y})^{n-\kappa}\wedge \chi_\infty^\kappa}{\omega_t^n}e^{\varphi_t -\varphi_\infty}\le e^{h(t)} \bk{\frac{\tr_{\omega_t}\chi_\infty}{\kappa}}^\kappa\le 1+ \tilde h(t), $$
for some $\tilde h(t)\to 0$ as $t\to 0$ (here $\tilde h(t)$ depends on $K$), where in the first inequality we use the Newton-Maclaurin inequality. $f$ also satisfies that
\begin{equation}\label{eqn:f} \int_{X_y} (f-1)\omega_{SF,y}^{n-\kappa} = 0,\quad \quad \lim_{t\to\infty}\int_{X_y} |f-1|\omega_{SF,y}^{n-\kappa} = 0.\end{equation}
The Calabi estimate implies that $\sup_{X_y}|\nabla f|_{\theta_{y}}\le C$ for all $y\in K'$, and $(X_y, \theta_{y})$ have uniformly bounded diameter and volume for $y\in K'$. So it follows that $f$ converges to $1$ uniformly on $K$ as $t\to 0$. That is
$$\| \big(t^{-1}\omega_t|_{X_y}\big)^{n-\kappa} - \omega_{SF,y}^{n-\kappa}\|_{C^0(X_y, \theta_{y})}\to 0,\text{ as }t\to 0,$$
uniformly on $K'$. Since $t^{-1}\omega_t|_{X_y}$ converges in $C^\alpha(X_y,\theta_{y})$ topology to some limit metric $\omega_{\infty,y}$ which satisfies the Monge-Amp\`ere equation (weakly) on $X_y$, $\omega_{\infty,y}^{n-\kappa} = \omega_{SF,y}^{n-\kappa}$, by the uniqueness of complex Monge-Amp\`ere equations, it follows that $\omega_{\infty,y} = \omega_{SF,y}$ and $t^{-1}\omega_t|_{X_y}$ converge in $C^\alpha$ to $\omega_{SF,y}$, for any $y\in K'$. Next we show the convergence is uniform in $K'$.

\medskip

\item Define a new $f$ on $X\backslash S$ which takes the form 
$$f|_{X_y} = \frac{t^{-1}\omega_t|_{X_y}\wedge (\omega_{SF,y})^{n-\kappa -1}}{\omega_{SF,y}^{n-\kappa}}\ge\bk{ \frac{(t^{-1}\omega_t|_{X_y})^{n-\kappa}}{\omega_{SF,y}^{n-\kappa}}}^{1/(n-\kappa)},$$
and the RHS tends to $1$ uniformly on $K$ as $t\to \infty$. Then we have similar equations as in \eqref{eqn:f} for this new $f$. This implies
$$\Big\| \frac{1}{n-\kappa}\tr_{\omega_{SF,y}}(t^{-1}\omega_t)|_{X_y} - 1 \Big\|_{L^\infty(K)}\to 0,\quad \text{as }t\to 0.$$
So $t^{-1}\omega_t|_{X_y}\to \omega_{SF,y}$ uniformly for any $y\in K'$.
 
\item Define $$\tilde \omega = t {\omega_{SF}} + \chi_\infty.$$From a result of \cite{TWY} (see the proof of Theorem 1.1 of \cite{TWY}), we have $|\tr_{\omega_t} \xk{ \omega_{SF} - \omega_{SF,y}  }|\le Ct^{-1/2}$, then
$$\tr_{\omega_t}\tilde \omega\le \tr_{\omega_t} \bk{ {t\omega_{SF,y}} + \chi_\infty   } + C \sqrt{t} = n + \tilde h(t),$$
for some $\tilde h(t)\to 0$ when $t\to 0$. Moreover it can be checked that
$$\lim_{t\to 0}\frac{\tilde \omega^n}{\omega_t^n}= 1, \quad\text{on }  K.$$
Hence we see that $\omega_t\xrightarrow{C^0(K)} \chi_\infty$ as $t\to 0$.

\end{enumerate}
We finish the proof of (1), (2) and (3) of Theorem \ref{main4}.

\begin{remark}
From {\bf Steps 1, 2 and 3}, we see that for any compact subset $K\subset X\backslash S$, there exists an $\varepsilon(t) = \varepsilon_K(t)\to 0$ as $t\to 0 $ such that when $t$ is small
\begin{equation}\label{eqn:rem 1}
\Phi^*\chi_\infty - \varepsilon(t)\theta\le \omega_t\le \Phi^*\chi_\infty + \varepsilon(t)\theta, \quad\text{on }K,
\end{equation}
and 
\begin{equation}\label{eqn:rem 2}
\Phi^*\chi_\infty \le (1+\varepsilon(t)) \omega_t,\quad \text{on }K.
\end{equation}
From the uniform convergence of $t^{-1}\omega_t|_{X_y}$ to $\omega_{SF,y}$ for any $y\in \Phi(K)$, we see that there is a uniform constant $C_0=C_0(K)>0$ such that
\begin{equation}\label{eqn:rem 3}
\omega_{t}|_{X_y}\le {C_0}t\omega_{SF,y},\quad \textnormal{for~ all~} y\in \Phi(K).
\end{equation}

\end{remark}

\newcommand{\zZ}{{\mathbf{Z}}}
Choose a sequence $t_k\to 0 $. The metric spaces $(X,\omega_{t_k})$ have $\ric(\omega_{t_k})\ge -1$ and $\mathrm{diam}(X,\omega_{t_k})\le D$ for some constant $D<\infty$. By Gromov's pre-compactness theorem up to a subsequence we have
$$(X,\omega_{t_k})\xrightarrow{d_{GH}} (\zZ, d_\zZ),$$ 
for some compact metric length space $\zZ$ with diameter bounded by $D$. The idea of the proof of (4) in Theorem \ref{main4} is motivated by \cite{GTZ}, and we present below a slightly different argument from theirs.
\smallskip

\noindent{\bf  Step 4:} We will show {\bf Claim 3:} There exists an open subset $\zZ_0\subset \zZ$ and a homeomorphism $f: X_{can}^\circ \to \zZ_0$ which is a local isometry.
\begin{proof}[Proof of {\bf Claim 3}]
By Lemma \ref{lemma:Sch}, the maps $\Phi = \Phi_k: (X,\omega_{t_k})\to (X_{can},\chi)$ are uniformly Lipschitz with respect to the given metrics, and the target space is compact, so up to a subsequence $\Phi_k\to \Phi_\infty: (\zZ,d_\zZ)\to (X_{can}, \chi)$ along the GH convergence $(X, \omega_{t_k})\to (\zZ,d_\zZ)$ which is also Lipschitz and the convergence is in the sense that for any $x_k\to (X,\omega_{t_k})$ which converges to $z\in \zZ$, then $\Phi_\infty(z) = \lim_{k\to\infty} \Phi_k(x_k)$, and there is a constant $C>0$ such that  $d_\chi\xk{ \Phi_\infty(z_1), \Phi_\infty(z_2)  }\le C d_\zZ (z_1,z_2)$ for all $z_i\in \zZ$.

\smallskip

We denote $\zZ_0 = \Phi_\infty^{-1}(X_{can}^\circ)$ which is an open subset of $\zZ$ since $\Phi_\infty$ is continuous. We will show that $\Phi_\infty|_{\zZ_0}: \zZ_0\to X_{can}^\circ$ is a bijection and a local isometry. Hence $f =( \Phi_\infty|_{\zZ_0})^{-1}: X_{can}^\circ \to \zZ_0$ is the desired map.  

\smallskip

$\bullet$ {\em $\Phi_\infty|_{\zZ_0}$ is injective:} Suppose $\Phi_\infty(z_1) = \Phi_\infty(z_2)$ for $z_1,z_2\in \zZ_0 = \Phi_\infty^{-1}(X_{can}^\circ)$. Denote $y = \Phi_\infty(z_1) = \Phi_\infty(z_2)\in X_{can}^\circ$. Since $(X_{can}^\circ,\chi_\infty)$ is an (incomplete) smooth Riemannian manifold there exists a small $r = r_y>0$ such that $\big(B_{\chi_\infty}(y, 2r), \chi_\infty \big )$ is geodesic convex. Choose sequences $z_{1,k}$ and $z_{2,k}\in (X,\omega_{t_k})$ converging $z_1, z_2$ respectively along the GH convergence. By definition of $\Phi_k = \Phi \to\Phi_\infty$ it follows that $d_\chi (\Phi(z_{1,k}), \Phi_\infty(z_1)  )\to 0 $ and $d_\chi ( \Phi(z_{2,k}), \Phi_\infty(z_2) )\to 0$. Since $d_{\chi}$ and $d_{\chi_\infty}$ are equivalent on $B_{\chi_\infty}(y, 2r)$, it follows that $d_{\chi_\infty}(\Phi(z_{1,k}), \Phi(z_{2,k})   ) \to 0$ and hence we can find minimal $\chi_\infty$-geodesics $\gamma_k$ connecting $\Phi(z_{1,k})$ and $\Phi(z_{2,k})$ with $\gamma_k\subset B_{\chi_\infty}(y, r)$  and $L_{\chi_\infty}(\gamma_k) \to 0$. By the locally uniform convergence \eqref{eqn:rem 1} on $\Phi^{-1}\xk{ \overline{ B_{\chi_\infty}(y, 2r)  }  }$
there exists a lift of $\gamma_k$, $\tilde \gamma_k$ in $\Phi^{-1}\xk{ \overline{ B_{\chi_\infty}(y, 2r)  }  }$, such that 
$L_{\omega_{t_k}} (\tilde \gamma_k)\le L_{\chi_\infty}(\gamma_k) + \epsilon(t_k) L_{\omega}(\tilde \gamma_k) \to 0$ as $t_k\to 0$. $\tilde \gamma_k$ connects $z_{1,k}$ and $z_{2,k}$ hence $d_{\omega_{t_k}}(z_{1,k}, z_{2,k}) \le L_{\omega_{t_k}} (\tilde \gamma_k)\to 0$, which implies by the convergence of $z_{i,k}\to z_i$ that $d_\zZ(z_1, z_2) = 0$ and $z_1 = z_2$.

\smallskip

$\bullet$ {\em $\Phi_\infty|_{\zZ_0}$ is a local isometry:} let $z\in \zZ_0$ and $y = \Phi_\infty(z)\in X_{can}^\circ$. There is a small $r=r_y>0$ such that $(B_{\chi_\infty}(y, 3r), \chi_\infty)$ is geodesic convex. Take $U = (\Phi_\infty|_{\zZ_0})^{-1} \xk{ B_{\chi_\infty}(y, r)  }$
 to be an open neighborhood of $z\in \zZ$. We will show that $\Phi_{\infty}|_{\zZ_0}: (U, d_\zZ)\to (B_{\chi_\infty} (y, r), \chi_\infty)$ is an  isometry. Fix any two points $z_1, z_2\in U$ and $y_i = \Phi_\infty(z_i)\in B_{\chi_\infty}(y, r)$ for $i = 1, 2$. As before we choose $z_{i,k}\in (X,\omega_{t_k})$ such that $z_{i,k}\to z_i$ along the GH convergence for $i = 1,2$. It follows then from $\Phi_k = \Phi\to \Phi_\infty$ that $d_{\chi_\infty}\xk{ \Phi(z_{i,k}), y_i  }\to 0$, and when $k$ is large, $\Phi(z_{i,k}) $ lie in $B_{\chi_\infty}(y, 1.1r)$. Choose $\omega_{t_k}$-minimal geodesics $\gamma_k$ connecting $z_{1,k}$ and $z_{2,k}$ such that $d_{\omega_{t_k}}(z_{1,k},z_{2,k})  = L_{\omega_{t_k}}(\gamma_k) \to d_\zZ(z_1,z_2)$. The curve $\bar \gamma_k = \Phi(\gamma_k)$ connects $\Phi(z_{1,k})$ with $\Phi(z_{2,k})$. If $\bar \gamma_k\subset B_{\chi_\infty}(y, 3r)$, from \eqref{eqn:rem 2} it follows that 
 $$d_{\chi_\infty}(\Phi(z_{1,k}), \Phi(z_{2,k}))\le L_{\chi_\infty}(\bar \gamma_k) \le (1+\epsilon(t_k)) L_{\omega_{t_k}}(\gamma_k)\to d_\zZ(z_1, z_2).$$
In case $\bar \gamma_k \not\subset B_{\chi_\infty}(y, 3r)$, we have  
$$d_{\chi_\infty}(\Phi(z_{1,k}), \Phi(z_{2,k}))\le 3.8 r \le L_{\chi_\infty}(\bar \gamma_k \cap B_{\chi_\infty}(y, 3r)  )\le (1+\epsilon(t_k)) L_{\omega_{t_k}}(\gamma_k)\to d_\zZ(z_1, z_2).$$ Letting $k\to \infty$ we conclude that $d_{\chi_\infty}(y_1,y_2)\le d_\zZ(z_1, z_2)$. To see the reverse inequality, we take $\chi_\infty$-minimal geodesics $\sigma_k$ connecting $\Phi(z_{1,k})$ and $\Phi(z_{2,k})$. Clearly $\gamma_k\subset B_{\chi_\infty}(y, 3r)$. Take a lift of $\sigma_k$, $\tilde \sigma_k$ in $\Phi^{-1}\xk{ \overline{ B_{\chi_\infty}(y, 3r) }  }$ it follows from \eqref{eqn:rem 1} that $d_{\omega_{t_k}} (z_{1,k}, z_{2,k})\le  L_{\omega_{t_k}}(\tilde \sigma_k) \le L_{\chi_\infty}(\sigma_k) + \epsilon(t_k) L_\omega(\tilde \sigma_k)\to d_{\chi_\infty}(y_1, y_2)$. Letting $k\to\infty$ we get $d_\zZ(z_1, z_2) \le d_{\chi_\infty}(y_1, y_2)$. Hence $d_\zZ(z_1, z_2) = d_{\chi_\infty}(y_1, y_2)$ and $\Phi_\infty|_{\zZ_0} : U \to B_{\chi_\infty}(y, r)$ is an isometry.

\smallskip

$\bullet$ {\em$\Phi_\infty|_{\zZ_0}$ is surjective:} this is almost obvious from the definition. Take any $y\in X_{can}^\circ$ and any fixed point $x\in \Phi^{-1}(y)\subset (X,\omega_{t_k})$. Up to a subsequence $x\xrightarrow{d_{GH}} z\in (\zZ,d_\zZ)$. It then follows from $\Phi_k\to \Phi_\infty$ that $d_\chi(y, \Phi_\infty(z)) = d_{\chi}\xk{ \Phi_k(x), \Phi_\infty(z)  } \to 0$ as $k\to\infty$. Hence $\Phi_\infty(z) = y$ and $z\in \Phi_\infty^{-1}(X_{can}^\circ) = \zZ_0$.

\end{proof}


\smallskip

\noindent{\bf Step 5:}
In this step we will show $\zZ_0\subset \zZ$ is dense. Fix a base point $\bar x\in \zZ_0$, upon rescaling if necessary we may assume the metric ball $B_{\chi_\infty}\xk{ f^{-1}(\bar x),2  }\subset (X_{can}^\circ,\chi_\infty)$ is geodesic convex. Choose a sequence of points $\bar p_k\in (X,\omega_{t_k})$ such that $\bar p_k\to \bar x$ along the GH convergence $(X,\omega_{t_k})\to (\zZ, d_\zZ)$. We define a function on $X\times [0,\infty)$ as the normalized volume (\cite{CC})
$$\underline{V}_k (x,r) = \frac{ \vol_{\omega_{t_k}}\xk{ B_{\omega_{t_k}}(x,r) } }{ \vol_{\omega_{t_k}}\xk{B_{\omega_{t_k}} (\bar p_k, 1) } },$$
by standard volume comparison it is shown in \cite{CC} that $\underline{V}_k(\cdot,\cdot)$ is equi-continuous and uniformly bounded hence they converges (up to  a subsequence) to a function $\underline{V}_\infty: \zZ\times [0,\infty)\to [0,\infty)$ in the sense that for any $x_k\to x$ along the GH convergence and $r\ge 0$,
$$\underline{V}_k(x_k,r)\to \underline{V}_\infty(x,r),\quad \text{as }k\to\infty.$$
And $\underline{V}_\infty$ satisfies similar estimates as in volume comparison, i.e. for $r_1\le r_2$, $\frac{\underline{V}_\infty(x, r_1)}{\underline{V}_\infty(x,r_2)}\ge \mu(r_1,r_2)>0$ where $\mu(\cdot, \cdot)$ is the quotient of volumes of balls in a space form. 
The function $\underline{V}_\infty$ induces a Radon $\nu$ on $(\zZ,d_\zZ)$. More precisely for any $K\subset \zZ$, define
$$\hat\nu(K) = \lim_{\delta\to 0}\hat\nu_\delta(K) = \lim_{\delta\to 0}\inf  \sum_i \underline{V}_\infty(x_i,r_i) $$
where the infimum is taken over all metric balls $B_{d_\zZ}(x_i,r_i)$ with $r_i\le \delta$ whose union covers $K$.

\smallskip

\noindent{\bf Claim 4:} 
 For any $x\in \zZ_0$ and $r=r_x >0$ such that $B_{\chi_\infty}\xk{f^{-1}(x),2r  }\subset X_{can}^\circ$ is geodesic convex, we have
$$\underline{V}_\infty(x,r) = v_0 \int_{ \Phi^{-1}\xk{ B_{\chi_\infty}(f^{-1}(x), r)  }  } e^{-\varphi_\infty}\theta^n$$
for a fixed constant $v_0 = \bk{\int_{\Phi^{-1}\xk{B_{\chi_\infty}(f^{-1}(\bar x),1)   }  } e^{\varphi_\infty}\theta^n    }^{-1}.  $
\begin{proof}[Proof of {\bf Claim 4}]
The proof is parallel to that in \cite{GTZ}, so we only provide a sketch. For the given $x\in \zZ_0$, we choose a sequence of points $p_k\in (X,\omega_{t_k})$ such that $p_k\to x$. 
As in \cite{GTZ}, due to \eqref{eqn:rem 1} and that the metrics $\omega_{t_k}$ and $\theta$ are equivalent in $\Phi^{-1}\xk{ B_{\chi_\infty}(f^{-1}(x), 2r)  }$, it can be shown that
\begin{equation}\label{eqn:inclusion}\Phi^{-1}\xk{B_{\chi_\infty}(f^{-1}(x), r - \epsilon_k)   } \subset B_{\omega_{t_k}} \xk{ p_k, r  }\subset \Phi^{-1}\xk{B_{\chi_\infty}(f^{-1}(x), r + \epsilon_k)   }\end{equation}
when $k>>1$ and here $\epsilon_k\to 0$ as $k\to\infty$.  It follows then that
$$\lim_{k\to\infty}\int_{B_{\omega_{t_k}}(p_k, r)  } e^{\varphi_{t_k}}\theta^n = \int_{\Phi^{ -1  }\xk{ B_{\chi_\infty}(f^{-1}(x), r)  }    } e^{\varphi_\infty}\theta^n.   $$
From the equation $\omega_t^n = t^{n-\kappa} e^{\varphi_t} \theta^n$, we have
\begin{align*}
\underline{V}_k(p_k,r) = & \frac { \int_{B_{\omega_{t_k}}(p_k,r)} t^{n-\kappa} e^{\varphi_{t_k}}\theta^n     }{ \int_{B_{\omega_{t_k}}(\bar p_k, 1)   } t_k ^{n-\kappa} e^{\varphi_{t_k}}\theta^n  } \\
\to & \frac{ \int_{ \Phi^{-1}\xk{ B_{\chi_\infty}(f^{-1}(x), r)  }} e^{\varphi_\infty} \theta^n    }{ \int_{\Phi^{-1}\xk{ B_{\chi_\infty}(f^{-1}(\bar x), 1)   }} e^{\varphi_\infty} \theta^n   },
\end{align*}
where for the convergence of the denominators we use a similar relation as in  \eqref{eqn:inclusion} for $\bar p_k,\, \bar x$. From the definition that $\underline{V}_k(p_k, r)\to \underline {V}_\infty(x,r)$, we finish the proof of {\bf Claim 4}.
\end{proof}
Since along the Gromov-Hausdorff convergence the diameters are uniformly bounded by $D<\infty$, $\vol_{\omega_{t_k}}(B_{\omega_{t_k}}(p_k, D)) = \vol(X,\omega_{t_k}^n)$. So 
$$\underline{V}_\infty(x, D) = \lim_{k\to\infty} \frac{ \vol_{\omega_{t_k}} ( B_{\omega_{t_k}}(p_k, D)  )   }{\vol_{\omega_{t_k}} ( B_{\omega_{t_k}} (\bar p_k, 1)   )   } = \lim_{k\to\infty} \frac{ \int_X e^{\varphi_{t_k}}\theta^n   }{ \int_{B_{\omega_{t_k}} (\bar p_k, 1)  } e^{\varphi_{t_k}} \theta^n } = v_0 \int_X e^{\varphi_\infty}\theta^n.  $$ Therefore from $\zZ = B_{d_\zZ}(x, D)$, we have $$\hat\nu(\zZ)\le v_0 \int_X e^{\varphi_\infty} \theta^n.$$ 

\smallskip

Assume $\zZ_0\subset \zZ$ were not dense, then there exists a metric ball $B_{d_\zZ}(z, \rho)\subset \zZ\backslash \zZ_0$, by volume comparison estimate for $\underline{V}_\infty$
$$\hat\nu\xk{ B_{d_\zZ}(z, \rho)  }\ge \underline{ V }_\infty(z, D) \mu(\rho, D)=: \eta_0>0.$$
Then for any compact subset $K\subset \zZ_0$, $\hat\nu(K)\le \hat\nu(\zZ) - \eta_0$. On the other hand, for any open covering $B_{d_\zZ} (x_i, r_i)$ of $K$ with $B_{\chi_\infty}(f^{-1}(x_i), 2r_i)$ geodesic convex in $(X_{can}^\circ, \chi_\infty)$ and $r_i<\delta$, we have
$$\sum_i \underline{V}_\infty(x_i,r_i) = \sum_i v_0 \int_{ \Phi^{-1}\xk{ B_{\chi_\infty}(f^{-1}(x_i), r_i)  }  } e^{\varphi_\infty} \theta^n\ge v_0 \int_{\Phi^{-1} \xk{ f^{-1}(K)  }  } e^{\varphi_\infty}\theta^n$$ 
taking infimum over all such coverings and letting $\delta\to 0$, we get
$\hat\nu(K)\ge v_0 \int_{\Phi^{-1} \xk{ f^{-1}(K)  }  } e^{\varphi_\infty}\theta^n$. If we take $K$ large enough so that $f^{-1}(K)\subset X_{can}^\circ$ is large, we can achieve that 
$$\hat\nu(K)\ge v_0 \int_{\Phi^{-1}(X_{can}^\circ)} e^{\varphi_\infty}\theta^n - \frac{\eta_0}{10} = v_0 \int_{X} e^{\varphi_\infty}\theta^n - \frac{\eta_0}{10}\ge \hat\nu(\zZ) - \frac{\eta_0}{10}.$$
Hence we get a  contradiction, and $\zZ_0\subset \zZ$ is dense since $\hat\nu(\zZ\backslash \zZ_0) = 0$. 

\end{proof}


\section{Proof of Theorem \ref{main5}}

The proof of Theorem \ref{main5} is almost identical with that of Theorem \ref{main4}. We give the sketch here. The solution $g_t$ lies in the K\"ahler class $tL + (1-t)K_X$ for all $t\in ( t_{min}, 1]$. By definition and straightforward calculations from estimates of Yau \cite{Y1} and Aubin \cite{A},  for any $t\in (t_{min}, 1]$, the class $t L + (1-t) K_X$ is K\"ahler and so  $ t_{min} L + (1-t_{min})K_X$ is nef. We let $\Omega$ be a smooth volume form on $X$ and $\chi \in [  t_{min} L +  (1-t_{min})K_X]$ be a smooth closed $(1,1)$-form defined by
$$ \chi = \ddbar \log \Omega +  \theta. $$
Then the twisted K\"ahler-Einstein equation (\ref{curveq}) is equivalent to the following complex Monge-Amp\`ere equation for $t\in (t_{min}, 1]$
\begin{equation}\label{finitetime} 
(\chi + ( t - t_{min} ) \theta  + \ddbar \varphi_t  )^n = (t- t_{min} )^{n - \kappa} e^{\varphi_t} \Omega, 
\end{equation}
where $\kappa = \nu( t_{min} L +  (1-t_{min})K_X)$, the numerical dimension of the line bundle $t_{min} L + (1 - t_{min}) K_X$.  By Proposition \ref{main2}, there exists $C=C(X, \chi, \theta)>0$ such that for all $t\in (t_{min}, 1]$, 
$$\|\varphi_t -V_t \|_{L^\infty(X)} \leq C, $$
where $V_t$ is the extremal function associated to $\chi + (t -t_{min} ) \theta$. 
The rest of the proof for Theorem \ref{main5} is exactly the same as that of Theorem \ref{main3} and we leave it as an exercise for interested readers. 


\bigskip

\noindent {\bf{Acknowledgements:}} 
This is part of the thesis of Xin Fu at Rutgers University, and he would like to thank the Department of Mathematics for its generous support. Bin Guo would like to thank Prof. D.H. Phong for many stimulating discussions and his constant support and encouragement. The authors thank Valentino Tosatti for helpful comments on the earlier draft.


\end{document}